\makeatletter\newcommand{\Capitalize}[1]{\@ifpackagewith{inputenc}{utf8x}{\PrerenderUnicode{#1}}{}\edef\@tempa{\expandafter\@gobble\string#1}\edef\@tempb{\expandafter\@car\@tempa\@nil}\edef\@tempa{\expandafter\@cdr\@tempa\@nil}\uppercase\expandafter{\expandafter\def\expandafter\@tempb\expandafter{\@tempb}}\@namedef{\@tempb\@tempa}{\expandafter\MakeUppercase\expandafter{#1}}}\makeatother
\renewcommand{\thesection}{\arabic{section}}
\renewcommand{\thesubsection}{\arabic{section}.\arabic{subsection}}
\titleformat*{\section}{\large\bfseries}
\titleformat*{\subsection}{\bfseries}
\titleformat*{\subsubsection}{\itshape}
\newtheorem{theorem}{Theorem}
\newtheorem{lemma}[theorem]{Lemma}
\newtheorem{proposition}[theorem]{Proposition}
\newcommand{\Email}[1]{\newline\emph{E-mail:} \href{mailto:#1}{\textsf{#1}}}
\newcommand{\be}[1]{\begin{equation}\label{#1}}
\newcommand{\ee}{\end{equation}}
\renewcommand{\(}{\left(}
\renewcommand{\)}{\right)}
\newcounter{step}
\newenvironment{steps}{\setcounter{step}{0}}{}
\newcommand{\stepitem}{\addtocounter{step}1\par\smallskip\noindent\emph{Step \thestep.} }
\newcommand{\R}{{\mathbb R}}
\newcommand{\N}{{\mathbb N}}
\newcommand{\ird}[1]{\int_{\R^d}{#1}\,dx}
\newcommand{\nrm}[2]{\left\|{#1}\right\|_{#2}}
\newcommand{\dx}{\,{\rm d}x}
\newcommand{\RR}{\mathbb R}
\newcommand{\mB}{\mathcal B}
\newcommand{\Mstar}{\mathcal M}
\newcommand{\pc}{p_m}
\newcommand{\lambdaBarenblatt}{\mathtt{b}}
\newcommand{\dt}{\,{\rm d}t}
\newcommand{\rd}{{\rm d}}
\newcommand{\ka}{\overline{\kappa}}
\newcommand{\kb}{\underline{\kappa}}
\newcommand{\NN}{\mathbb{N}}
\newcommand{\cc}{\mathsf{c}_3}
\newcommand{\taustar}{{\mathsf c_\star}}
\newcommand{\pcc}{{\mathsf p}}
\def\supp{\mathrm{supp}}
\newcommand{\boxedresult}[1]{\noindent\fbox{\parbox{14.47cm}{#1}}\\[4pt]}
\newcommand{\eqrefstd}[1]{\renewtagform{starred}{(}{)}\usetagform{starred}\eqref{#1}\renewtagform{starred}{\cite[Eq.~(}{)]{BDNS2020}}\usetagform{starred}}
\begin{document}
\parindent=0pt\parskip=6pt\begin{spacing}{1.5}
{\Large\MakeUppercase{Stability in Gagliardo-Nirenberg\\ inequalities \\ Supplementary material}}
\vspace*{5pt}\hrule\end{spacing}\begin{spacing}{1.2}
{\large\MakeUppercase{M.~Bonforte, J.~Dolbeault, B.~Nazaret, N.~Simonov}}
\vspace*{12pt}
\end{spacing}

\emph{M.~Bonforte:} Departamento de Matem\'{a}ticas, Universidad Aut\'{o}noma de Madrid, and ICMAT, Campus de Cantoblanco, 28049 Madrid, Spain. \Email{matteo.bonforte@uam.es}
\\[6pt]
\emph{J.~Dolbeault:} Ceremade, UMR CNRS n$^\circ$~7534, Universit\'e Paris-Dauphine, PSL Research University, Place de Lattre de Tassigny, 75775 Paris Cedex~16, France. \Email{dolbeaul@ceremade.dauphine.fr}
\\[6pt]
\emph{B.~Nazaret:} SAMM (EA 4543), FP2M (FR CNRS 2036), Universit\'e Paris 1, 90, rue de Tolbiac, 75634 Paris Cedex~13, and Mokaplan, Inria Paris, France. \Email{bruno.nazaret@univ-paris1.fr}
\\[6pt]
\emph{N.~Simonov:} Ceremade, UMR CNRS n$^\circ$~7534, Universit\'e Paris-Dauphine, PSL Research University, Place de Lattre de Tassigny, 75775 Paris Cedex~16, France.
\Email{simonov@ceremade.dauphine.fr}

\medskip{\large\bf Abstract.} \emph{This document comes as supplementary material of the paper \emph{Stability in Gagliardo-Nirenberg inequalities~\cite{BDNS2020}} by the same authors. It is intended to state a number of classical or elementary statements concerning constants and inequalities for which we are not aware of existing published material or expressions detailed enough for our purpose. We claim no originality on the theoretical results and rely on standard methods in most cases, except that we keep track of the constants and provide constructive estimates.}

\bigskip\noindent This document is divided into two Parts. Part~\ref{PartI} is devoted to the computation of the constant in Moser's Harnack inequality based on~\cite{Moser1964,Moser1971} and has its own interest. Part~\ref{PartII} is thought as a series of fully explicit and constructive estimates for the reader of~\cite{BDNS2020} interested in the details of the computations. In order to make formulas tractable, whenever possible, simplicity has been privileged over sharpness and the computations leave plenty of space for improvements. For a comprehensive introduction to the topic, some motivations and a review of the literature, the reader is invited to refer to~\cite[Section~1]{BDNS2020}. Boxed inequalities are used to recall the results of~\cite{BDNS2020} which involve the constants needed for our computations.\vspace*{-0.5cm}

\thispagestyle{empty}
\footnote{\hspace*{-13pt}\begin{minipage}{14.5cm}
{\noindent\emph{Keywords:} Harnack Principle; Moser's Harnack inequality; Gagliardo-Nirenberg inequality; stability; entropy methods; fast diffusion equation; asymptotic behavior; intermediate asymptotics; self-similar Barenblatt solutions; rates of convergence}

\emph{2020 Mathematics Subject Classification.} 26D10; 46E35; 35K55; 49J40; 35B40; 49K20; 49K30; 35J20.

\end{minipage}}
\thispagestyle{empty}
\parindent=12pt\parskip=6pt

\newpage\part{\Large The constant in Moser's Harnack inequality}\label{PartI}

Let $\Omega$ be an open domain and let us consider a positive \emph{weak solution} to
\be{HE.coeff}
\frac{\partial v}{\partial t}=\nabla\cdot\big(A(t,x)\,\nabla v\big)
\ee
on $\Omega_T:=\left(0, T\right)\times \Omega$, where $A(t,x)$ is a real symmetric matrix with bounded measurable coefficients satisfying the uniform ellipticity condition
\be{HE.coeff.lambdas}
0<\lambda_0\,|\xi|^2\le\sum_{i,j=1}^dA_{i,j}(t,x)\,\xi_i\,\xi_j\le\lambda_1\,|\xi|^2\quad\forall\,(t,x,\xi)\in\R^+\times\Omega_T\times\R^d\,,
\ee
for some positive constants $\lambda_0$ and $\lambda_1$. Let us consider the neighborhoods
\be{cylinder.harnack}
\begin{split}
& D_R^+(t_0,x_0):=(t_0+\tfrac34\,R^2,t_0+R^2)\times B_{R/2}(x_0)\,,\\
& D_R^-(t_0,x_0):=\left(t_0-\tfrac34\,R^2,t_0-\tfrac14\,R^2\right)\times B_{R/2}(x_0)\,,
\end{split}\ee
and the constant
\be{h}
\mathsf h:=\exp\left[2^{d+4}\,3^d\,d+c_0^3\,2^{2\,(d+2)+3}\left(1+\frac{2^{d+2}}{(\sqrt2-1)^{2\,(d+2)}}\right)\sigma\right]
\ee
where
\be{c_0}
c_0=3^\frac2{d}\,2^\frac{(d+2)\,(3\,d^2+18\,d+24)+13}{2\,d}\(\tfrac{(2+d)^{1+\frac4{d^2}}}{d^{1+\frac2{d^2}}}\)^{(d+1)(d+2)}\,\mathcal K^\frac{2\,d+4}{d}\,,
\ee
\be{sigma}
\sigma=\sum_{j=0}^{\infty}\left(\tfrac34\right)^j\,\big((2+j)\,(1+j)\big)^{2\,d+4}\,.
\ee
Let $\pcc:=2\,d/(d-2)=2^\star$ if $d\ge3$, $\pcc:=4$ if $d=2$ and $\pcc\in(4,+\infty)$ if $d=1$. The constant $\mathcal K$ in~\eqref{c_0} is the constant in the inequality
\be{sob.step2}
\|f\|_{\mathrm L^{\pcc}(B_R)}^2\le\mathcal K\(\|\nabla f\|_{\mathrm L^2(B_R)}^2+\tfrac1{R^2}\,\|f\|_{\mathrm L^2(B_R)}^2\)\quad\forall\,f\in\mathrm H^1(B_R)\,.
\ee
If $d\ge 3$, then $\mathcal K$ is independent of $R$. For $d=1$, $2$, we further assume that $R\le 1$. We learn from~\cite[Appendices B and C]{BDNS2020} that
\be{estim.S-p}
\mathcal K \le
\begin{cases}\begin{array}{ll}
2\,\mathcal S_1^2=\tfrac2{\pi}\,\Gamma\left(\frac d2+1\right)^\frac2d&\quad\mbox{if}\quad d\ge 3\,,\\
\frac2{\sqrt{\pi}}&\quad\mbox{if}\quad d=2\,,\\
2^{1+\frac2{\pcc}}\max\left\{\tfrac{\pcc-2}{\pi^2},\tfrac14\right\}&\quad\mbox{if}\quad d=1\,.
\end{array}
\end{cases}
\ee
Also see~Table~\ref{table.k.bar} below. We shall also need some numerical constants associated with balls and spheres. The volume of the unit sphere $\mathbb S^{d-1}\subset\R^d$ is
\be{omega-d}
\omega_d=|\mathbb S^{d-1}|=\frac{2\,\pi^{d/2}}{\Gamma(d/2)}\,\le\,\frac{16}{15}\,\pi^3\,.
\ee
As a consequence, the volume of a $d$-dimensional unit ball is $\omega_d/d$ and
\be{omega-over-d}
\frac{\omega_d}{d}\le\pi^2
\ee
whenever sharpness is not needed. Let
\be{h-bar}
\overline{\mathsf h}:=\mathsf h^{\lambda_1+1/\lambda_0}\,.
\ee
\begin{theorem}\label{Claim:3} Let $T>0$, $R\in(0,\sqrt T)$, and take $(t_0,x_0)\in(0,T)\times\Omega$ such that $\left(t_0-R^2, t_0+R^2\right)\times B_{2\,R}(x_0)\subset\Omega_T$. Under Assumption~\eqref{HE.coeff.lambdas}, if $u$ is a weak solution of~\eqref{HE.coeff}, then
\be{harnack}
\sup_{D^{-}_R(t_0,x_0)} v\le\overline{\mathsf h}\,\inf_{D^{+}_R(t_0,x_0)} v\,.
\ee
\end{theorem}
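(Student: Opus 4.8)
\emph{Reduction and strategy.} The plan is to follow Moser's scheme~\cite{Moser1964,Moser1971}, run so as to keep every constant explicit. By the translation $(t,x)\mapsto(t-t_0,x-x_0)$ and the parabolic rescaling $v\mapsto v(R^2\,\cdot\,,R\,\cdot\,)$, both of which preserve equation~\eqref{HE.coeff} and the ellipticity bounds~\eqref{HE.coeff.lambdas}, it is enough to prove~\eqref{harnack} in the normalized situation $t_0=x_0=0$, $R=1$, in which case $D_1^-=(-\tfrac34,-\tfrac14)\times B_{1/2}$, $D_1^+=(\tfrac34,1)\times B_{1/2}$ and the reference cylinder $(-1,1)\times B_2$ is available. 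The aim is to produce an exponent $p_0>0$, a real number $a=a(v)$, cylinders $Q^-\supset D_1^-$ (at negative times) and $Q^+\supset D_1^+$ (at positive times), and an explicit constant $C$ such that
\[
\sup_{D_1^-}v\ \le\ C\,e^{a}\qquad\text{and}\qquad\inf_{D_1^+}v\ \ge\ C^{-1}\,e^{a}\,,
\]
since~\eqref{harnack} then holds with $\overline{\mathsf h}=C^2$. The first bound will come from a Moser $\mathrm L^{p_0}$--$\mathrm L^\infty$ estimate for $v$ on $Q^-$ combined with $|Q^-|^{-1}\int_{Q^-}v^{p_0}\,\dx\dt\le c(d)\,e^{p_0a}$; the second from the same $\mathrm L^{p_0}$--$\mathrm L^\infty$ estimate applied to $v^{-1}$ on $Q^+$ combined with $|Q^+|^{-1}\int_{Q^+}v^{-p_0}\,\dx\dt\le c(d)\,e^{-p_0a}$. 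The two integral bounds are the content of the logarithmic lemma, and $a$ cancels in the end.

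\emph{Local $\mathrm L^{p}$--$\mathrm L^\infty$ bounds (Moser iteration).} Since $v>0$ solves~\eqref{HE.coeff}, $v^q$ is a weak subsolution for $q\ge1$ and for $q\le0$, and a supersolution for $0<q<1$. Testing~\eqref{HE.coeff} against $\phi^2\,v^{2q-1}$ with a Lipschitz space--time cut-off $\phi$ and using~\eqref{HE.coeff.lambdas} yields, with $w:=v^q$, the Caccioppoli inequality
\[
\sup_{t}\int\phi^2\,w^2\,\dx\ +\ \lambda_0\int\!\!\int|\nabla(\phi\,w)|^2\,\dx\dt\ \le\ C_q\int\!\!\int\Big(\tfrac{\lambda_1^2}{\lambda_0}\,|\nabla\phi|^2+\big|\partial_t\phi^2\big|\Big)\,w^2\,\dx\dt\,.
\]
Feeding this into the parabolic embedding $\mathrm L^\infty_t\mathrm L^2_x\cap\mathrm L^2_t\mathrm H^1_x\hookrightarrow\mathrm L^{2\kappa}_{t,x}$ with $\kappa=1+2/d$, obtained from~\eqref{sob.step2} by Hölder in time — this is where $\mathcal K$, hence via~\eqref{estim.S-p} its explicit value, enters, and hence where the constant $c_0$ of~\eqref{c_0} originates; for $d=1,2$ the hypothesis $R\le1$ is what makes~\eqref{sob.step2} usable — gives a reverse-Hölder gain of exponent $\kappa$ on each cylinder of a geometric sequence shrinking from a slightly larger cylinder down to $D_1^{\mp}$. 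Iterating over $j=0,1,2,\dots$, the product of the step constants is controlled by the convergent series $\sigma$ of~\eqref{sigma} and by the geometric ratios of the successive radii (origin of the factor involving $(\sqrt2-1)^{2\,(d+2)}$ in~\eqref{h}); this produces, for every $p\in(0,1]$, a local estimate $\sup_{D_1^-}v\le c\,\big(|Q^-|^{-1}\int_{Q^-}v^{p}\,\dx\dt\big)^{1/p}$ (applied with $w=v^q$, $q\ge1$, then self-improved down to small $p$ by the standard iteration over shrinking cylinders), and, symmetrically, $\inf_{D_1^+}v\ge c^{-1}\big(|Q^+|^{-1}\int_{Q^+}v^{-p}\,\dx\dt\big)^{-1/p}$ (applied with $w=v^{-q}$, $q>0$), the constant $c$ being explicit in $d$, $\lambda_0$, $\lambda_1$ and comparable to $c_0$.

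\emph{Logarithmic lemma and crossover.} Testing~\eqref{HE.coeff} against $\phi^2/v$ (admissible since $v>0$) with a spatial cut-off $\phi\equiv1$ on $B_1$ and $\supp\phi\subset B_2$ gives, for $W:=\log v$,
\[
\tfrac{\lambda_0}{2}\int\phi^2\,|\nabla W|^2\,\dx\ \le\ \frac{\rd}{{\rm d}t}\int\phi^2\,W\,\dx\ +\ \frac{2\,\lambda_1^2}{\lambda_0}\int|\nabla\phi|^2\,\dx\,.
\]
Together with the Poincaré inequality on $B_1$ and integration in $t$, this produces a real number $a=a(v)$ and the one-sided bounds
\[
\big|\{(t,x)\in Q^-:W>a+s\}\big|\ +\ \big|\{(t,x)\in Q^+:W<a-s\}\big|\ \le\ \frac{C_\ast}{s}\qquad\forall\,s>0\,,
\]
with $C_\ast=C_\ast(d,\lambda_0,\lambda_1)$ explicit. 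A parabolic John--Nirenberg / covering argument in the spirit of~\cite{Moser1971} — whose purely combinatorial part accounts for the dimensional factor $2^{d+4}\,3^d\,d$ in~\eqref{h} — upgrades these to exponential decay, hence to exponential integrability: there is $p_0=p_0(d,\lambda_0,\lambda_1)>0$, inversely proportional to $C_\ast$, such that $|Q^-|^{-1}\int_{Q^-}v^{p_0}\,\dx\dt\le c(d)\,e^{p_0a}$ and $|Q^+|^{-1}\int_{Q^+}v^{-p_0}\,\dx\dt\le c(d)\,e^{-p_0a}$; since $1/p_0$ grows with $C_\ast$, the factor $c(d)^{1/p_0}$ arising when these are inserted into the local estimates carries the exponential dependence on the ellipticity constants. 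Combining this with the previous step makes $a$ drop out and yields~\eqref{harnack}.

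\emph{Collecting the constants; main obstacle.} All constants above are explicit in $d$, $\mathcal K$, $\lambda_0$ and $\lambda_1$; assembling them and estimating the dependence on $\lambda_0,\lambda_1$ crudely leads to a Harnack constant dominated by $\mathsf h^{\lambda_1+1/\lambda_0}=\overline{\mathsf h}$ with $\mathsf h$ as in~\eqref{h}. I expect the real work to be the bookkeeping in the Moser iteration: one must track, step by step, how $\mathcal K$, the exponents $\kappa^{j}$, the cut-off radii and the time weights combine, so that the infinite product of step constants is genuinely dominated by $c_0$ and $\sigma$ rather than by an unspecified $C(d)$ — together with the separate treatment of~\eqref{sob.step2} when $d=1,2$ under $R\le1$, and the careful propagation of the $\lambda_0,\lambda_1$-dependence. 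The crossover step is conceptually the subtle point but is classical; there the only real task is to make the John--Nirenberg constant and the admissible exponent $p_0$ fully quantitative.
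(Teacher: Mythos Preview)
Your overall three-step architecture (Moser iteration, logarithmic estimate, crossover) is exactly that of the paper, and the reduction by translation and scaling is fine. But the crossover step is where you part company with the paper, and your treatment of it is the weak point.

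You propose to pass from the level-set estimate on $\log v$ to exponential integrability via a ``parabolic John--Nirenberg / covering argument'', calling this ``classical'' with ``the only real task'' being to make the constant quantitative. This is precisely the step the paper (following Moser~\cite{Moser1971}) goes out of its way to \emph{avoid}: the whole purpose of the Bombieri--Giusti lemma (Lemma~\ref{BGM.Lemma}) is to bypass BMO and John--Nirenberg, because making those constants constructive is genuinely awkward. The paper instead feeds the Moser $\mathrm L^p$--$\mathrm L^\infty$ bound~\eqref{Lem.Moser.Upper}--\eqref{Lem.Moser.Lower} and the level-set bound~\eqref{Lem.Log.Est.Ineq.u} directly into Lemma~\ref{BGM.Lemma}, which by an explicit dichotomy-and-iteration argument produces the uniform bound $\sup_{\Omega_\theta}f\le\kappa_0^\mu$ with $\kappa_0$ given in closed form. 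Your route may be salvageable, but you should not advertise the John--Nirenberg step as mere bookkeeping; if you keep it, you owe an explicit proof with explicit constants, and you will likely find the Bombieri--Giusti device both shorter and cleaner.

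A few attributions of constants are also off, which matters since the stated goal is to obtain~\eqref{h} exactly. The series $\sigma$ in~\eqref{sigma} and the factor $(\sqrt2-1)^{-2(d+2)}$ do \emph{not} come from the Moser iteration; they arise in the Bombieri--Giusti iteration (the choice $\varrho_j=1-(1-\theta)/(1+j)$ with $\theta=1/\sqrt2$). The factor $2^{d+4}\,3^d\,d$ is $4c_2$ from the logarithmic Lemma~\ref{Lem.Log.Est}, not a combinatorial constant from a covering argument. Finally, in the paper the Moser constant $c_1$ depends only on $d$ (via $\mathcal K$), with the ellipticity entering only through the restriction $p\in(0,1/\mu)$; the exponential dependence $\mathsf h^{\mu}$ then comes out of Lemma~\ref{BGM.Lemma}, not from raising $c(d)$ to the power $1/p_0$. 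If you want to land on the specific constant~\eqref{h}, you will need to reorganize accordingly.
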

Here a \emph{weak solution} is defined as in~\cite[p.~728]{Moser1971},~\cite[Chapter~3]{Ladyzenskaja1995} or~\cite{Aronson1967b}. The \emph{Harnack inequality} of Theorem~\ref{Claim:3} goes back to J.~Moser~\cite{Moser1964,Moser1971}. The dependence of the constant on the ellipticity constants $\lambda_0$ and $\lambda_1$ was not clear before the paper of J.~Moser~\cite{Moser1971}, where he shows that such a dependence is optimal by providing an explicit example,~\cite[p.~729]{Moser1971}. The fact that $\mathsf h$ only depends on the dimension $d$ is also pointed out by C.E.~Gutierrez and R.L.~Wheeden in~\cite{Gutierrez1990} after the statement of their Harnack inequalities,~\cite[Theorem~A]{Gutierrez1990}. However, to our knowledge, a complete constructive proof was still missing. We do not claim any originality concerning the strategy but provide for the first time an explicit expression for the constant~$\overline{\mathsf h}$.

The proof of the above theorem is quite long and technical, and relies on three main ingredients, contained in the first three sections:
\begin{itemize}
\item \textit{Moser iteration procedure.} In Section~\ref{Sec:MoserIteration}, the main local upper and lower smoothing effects are obtained, through the celebrated Moser iteration, in the form of precise $\mathrm L^p-\mathrm L^\infty$ and $\mathrm L^{\kern-2pt-p}-\mathrm L^{\kern-2pt-\infty}$ bounds for arbitrarily small $p>0$. The next task would be to relate such upper and lower bounds, to produce the desired Harnack inequalities. This can be done by means of parabolic BMO estimates, but in this case one may lose control of the estimates, since not all the proofs of such BMO bounds are constructive. We hence follow the ideas of J.~Moser in~\cite{Moser1971}, which avoids the use of BMO spaces, as follows.
\item \textit{Logarithmic Estimates.} The idea is to obtain detailed informations on the level sets of solutions. This can be done by estimating the logarithm of the solution to~\eqref{HE.coeff}. This is a fundamental estimate needed both in the approach with BMO spaces (it indeed implies that $u$ has bounded mean oscillation) and in the alternative approach used here. 
\item \textit{A lemma by E.~Bombieri and E.~Giusti.} In Section~\ref{Sec:Bombieri-Giusti}, we prove a parabolic version of the Bombieri-Giusti Lemma, following again Moser's proof in~\cite{Moser1971} (also see~\cite{Bombieri_1972}). This refinement of the upper bounds may seem trivial at first sight, but it is not and turns out to be crucial for our constructive method. 
\item \textit{Proof of Moser's Harnack inequality.} We finally prove Theorem~\ref{Claim:3} using a suitably rescaled solution.
\item \textit{Harnack inequality implies H\"older Continuity.} As an important consequence of Theorem~\ref{Claim:3}, we obtain explsicit and quantitative H\"older continuity estimates in Section~\ref{sec:holder}, by following Moser's approach in~\cite{Moser1964}. We find an explicit expression of the H\"older exponent, which only depends on the dimension and on the ellipticity constants.
\end{itemize}
 
\section{Upper and lower Moser iteration}\label{Sec:MoserIteration}

Let us start by recalling the definition of the parabolic cylinders
\[\label{Parab.Cylinders}\begin{split}
&Q_\varrho=Q_\varrho(0,0)=\left\{ |t|<\varrho^2\,,\;|x|<\varrho\right\}=(-\varrho^2,\varrho^2)\times B_\varrho(0)\,,\\
&Q^+_\varrho=Q_\varrho(0,0)=\left\{ 0<t<\varrho^2\,,\;|x|<\varrho\right\}=(0,\varrho^2)\times B_\varrho(0)\,,\\
&Q^-_\varrho=Q_\varrho(0,0)=\left\{ 0<-t<\varrho^2\,,\;|x|<\varrho\right\}=(-\varrho^2,0)\times B_\varrho(0)\,.
\end{split}
\]
In order to perform the celebrated Moser iteration, we establish an important lemma, which relies on~\eqref{sob.step2}. We follow the method of~\cite{Moser1964,Moser1971} and provide a quantitative and constructive proof, with explicit constants. From here on, we assume that $u$ is a positive solution, as was done by J.~Moser in~\cite[p.~729, l.~8-9]{Moser1971}.
\begin{lemma}[Moser iteration,~\cite{Moser1964,Moser1971}]\label{Lem.Moser}
Assume that $r$ and $\rho$ are such that $1/2\le \varrho\le r\le1$ and $\mu=\lambda_1+1/\lambda_0$ and let $v$ be a nonnegative solution to~\eqref{HE.coeff}. Then there exists a positive constant $c_1=c_1(d)$ such that
\be{Lem.Moser.Upper}
\sup_{Q_\varrho} v^p \le\frac{c_1}{(r-\varrho)^{d+2}}\iint_{Q_r}v^p \dx\dt\quad\forall\,p\in\(0,\tfrac1\mu\)
\ee
and
\be{Lem.Moser.Lower}
\sup_{Q^-_\varrho} v^p \le\frac{c_1}{(r-\varrho)^{d+2}}\iint_{Q_r^-}v^p \dx\dt\quad\forall\,p\in\(-\tfrac1\mu,0\).
\ee
\end{lemma}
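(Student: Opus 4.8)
The plan is to run the classical Moser iteration, producing both estimates simultaneously by exploiting the fact that powers $v^\alpha$ of a nonnegative (sub/super)solution satisfy useful differential inequalities for a suitable range of exponents $\alpha$. First I would fix the test function: for concentric cylinders $Q_r \supset Q_\varrho$ choose a smooth cutoff $\chi(t,x) = \zeta(t)\,\psi(x)^2$ with $\psi$ supported in $B_r$, equal to $1$ on $B_\varrho$, and $|\nabla\psi|\lesssim (r-\varrho)^{-1}$, and similarly $\zeta$ in time with $|\dot\zeta|\lesssim (r-\varrho)^{-2}$; this is where the exponent $d+2$ and the factor $(r-\varrho)^{-(d+2)}$ will ultimately come from, since each iteration step loses a power of $(r-\varrho)^{-1}$ on a ball and the sum telescopes. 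Testing the weak formulation of~\eqref{HE.coeff} against $v^{2\beta-1}\chi$ (with $\beta>1/2$ in the supersolution case and $\beta<1/2$, excluding the critical value, in the other) and using the ellipticity bounds~\eqref{HE.coeff.lambdas} yields, after absorbing cross terms by Young's inequality, the energy inequality
\[
\sup_{t}\int \chi\,|v^\beta|^2\,\dx + \lambda_0\iint \chi\,|\nabla v^\beta|^2\,\dx\dt \;\le\; \frac{C(\beta,\mu)}{(r-\varrho)^2}\iint_{Q_r} v^{2\beta}\,\dx\dt,
\]
where the constant degenerates like $|2\beta-1|^{-1}$ as $\beta\to 1/2$, which is exactly why the range of admissible exponents in~\eqref{Lem.Moser.Upper}--\eqref{Lem.Moser.Lower} is $p\in(0,1/\mu)$ rather than all of $(0,\infty)$ — one must stay quantitatively away from the forbidden exponent, and $\mu = \lambda_1 + 1/\lambda_0$ is the natural scale controlling this.

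Next I would convert this energy estimate into a gain of integrability. Interpolating the $L^\infty_tL^2_x$ and $L^2_tH^1_x$ control via the Sobolev-type inequality~\eqref{sob.step2} on each time slice (with $R$ of order $r\le 1$, so the $R^{-2}$ term is harmless) and integrating in time gives a bound of the form
\[
\iint_{Q_\varrho} v^{2\beta\kappa}\,\dx\dt \;\le\; \left(\frac{C\,\mathcal K}{(r-\varrho)^2}\right)^{\!\kappa}\!\left(\iint_{Q_r} v^{2\beta}\,\dx\dt\right)^{\!\kappa},
\qquad \kappa := 1 + \tfrac2d > 1,
\]
the standard parabolic gain $\kappa = (d+2)/d$ coming from combining the spatial exponent $\pcc$ with the extra time integration. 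This is the single-step inequality; iterating it along a sequence of exponents $p_k = p\,\kappa^k$ and a nested sequence of radii $\varrho_k \downarrow \varrho$ with $r_0 = r$ (for instance $r_k - r_{k+1} = (r-\varrho)\,2^{-(k+1)}$) produces, upon taking $k\to\infty$, the sup bound $\sup_{Q_\varrho} v^p \le \frac{c_1}{(r-\varrho)^{d+2}}\iint_{Q_r}v^p$. The exponent $d+2$ on $(r-\varrho)$ is $\sum_k (k+1)\,2/( \kappa^k\,\cdots)$ computed exactly so that it telescopes; the prefactor $c_1 = c_1(d)$ is the (finite, explicitly summable) product $\prod_k (C\mathcal K\,2^{2(k+1)})^{1/\kappa^k}$, and keeping track of it explicitly is precisely the point of the supplementary material. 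For the lower estimate~\eqref{Lem.Moser.Lower} one repeats the argument on the backward-in-time cylinders $Q^-$; the only change is the sign convention and the range $\beta<1/2$ (equivalently $p<0$), and the forbidden exponent $p=0$ is again avoided by the same $1/\mu$ cutoff.

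The main obstacle is the bookkeeping of constants through the iteration: one must verify that the infinite product defining $c_1$ converges and depends only on $d$ (via $\kappa$, $\pcc$, and $\mathcal K$, which by~\eqref{estim.S-p} is dimensional), and, more delicately, that the $\beta$-dependence of the energy constant $C(\beta,\mu) \sim |2\beta-1|^{-2}$ does not blow up along the iteration — here the key observation is that after the \emph{first} step one has $\beta_k = \tfrac12\,p\,\kappa^k$ which moves \emph{away} from $1/2$ geometrically (since $p<1/\mu\le 1$ forces $\beta_0<1/2$, and then $\beta_k\to 0$ or stays bounded), so only finitely many early terms are sensitive and they are absorbed into $c_1(d)$ together with the assumption $p<1/\mu$. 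A secondary technical point is the standard one of justifying the test-function computation for merely weak solutions (Steklov averaging in time, truncation of $v^{2\beta-1}$ near $0$ and $\infty$), which I would handle exactly as in~\cite[p.~728--729]{Moser1971} without dwelling on it.
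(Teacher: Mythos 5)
Your proposal follows the paper's overall strategy (energy estimate by testing with $v^{p-1}\psi^2$, parabolic Sobolev embedding, geometric iteration of exponents and nested radii, separate handling of $p>0$ and $p<0$), but there is a genuine gap in how you handle the degeneration of the energy constant near the forbidden exponent. You claim that ``after the first step one has $\beta_k=\tfrac12\,p\,\kappa^k$ which moves away from $1/2$ geometrically \dots\ and then $\beta_k\to0$ or stays bounded.'' This is false: since $\kappa=1+2/d>1$, the sequence $\beta_k=\tfrac12\,p\,\kappa^k\to\infty$, and since $\beta_0=p/2<1/2$ it must at some step land in a neighborhood of $\beta=1/2$ (equivalently $p_k$ near $1$). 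For an arbitrary $p\in(0,1/\mu)$, some $p\,\kappa^k$ can be arbitrarily close to $1$ (or equal to it), and then your energy constant $C(\beta,\mu)\sim|2\beta-1|^{-1}$ blows up at that step; the resulting $c_1$ would then fail to be uniform in $p$, contradicting the claim $c_1=c_1(d)$.

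The paper avoids this by not iterating from $p$ itself. It fixes the specific exponent grid $p_n=\tfrac{\gamma+1}{2}\,\gamma^{\,n-n_0}$, anchored at the midpoint $\tfrac{1+\gamma}{2}$ of $[1,\gamma]$, and shows that $p_n\ne1$ for every integer $n$ (because $\log\!\big(\tfrac{1+\gamma}{2}\big)/\log\gamma$ is never an integer) and moreover that $|p_n-1|\ge\min\{1/d,\,1/(d+2)\}=1/(d+2)$ uniformly in $n$. Only then is the problem reduced, by monotonicity of $\mathrm L^q$ norms on bounded domains, from arbitrary $p\in(0,1/\mu)$ to the nearest grid point $p_0\in(p/\gamma,\,p]$. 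That uniform lower bound on $|p_n-1|$, combined with $\mu\,p_0<1$, is precisely what makes the bound~\eqref{Lem.Moser.Proof.16} on $1+\mu/\varepsilon_n$ hold with a constant depending only on $d$ and $\gamma^n$, and thus what makes $c_1$ dimensional. You should replace your ``moves away from $1/2$'' argument with this grid construction; note also that for $p<0$ the issue genuinely disappears (the $p_n$ stay negative, hence bounded away from $1$), which is why the paper states that case needs no extra precaution.
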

Let us observe that the second estimate is a lower bound on $v$ because $p$ is negative. Our contribution is to establish that the constant $c_1=c_1(d)$ is given by  
\be{Lem.Moser.constant}
c_1=3^{\gamma-1}\(2^{2\gamma^2+7(\gamma-1)}\,\gamma^{(\gamma+1)(2\gamma-1)}\,d^{(\gamma+1)(\gamma-1)}\,\mathcal K^{\gamma-1}\)^\frac\gamma{(\gamma-1)^2}\,,
\ee
where $\gamma=(d+2)/d$ if $d\ge3$, $\gamma=5/3$ if $d=1$ or $2$, and $\mathcal K$ is the constant of~\eqref{estim.S-p}.

\begin{proof}[Proof of Lemma~\ref{Lem.Moser}] We first notice that it is sufficient to prove the lemma for $\varrho=1/2$ and $r=1$. By \emph{admissible transformations}, as they are called in Moser's papers~\cite{Moser1964,Moser1971}, we can change variables according to 
\be{admissible.transformations}
t\rightsquigarrow \alpha^2\,t+t_0\quad\mbox{and}\quad x\rightsquigarrow \alpha\,x+x_0
\ee
without changing the class of equations: $\lambda_0$ and $\lambda_1$ are invariant under~\eqref{admissible.transformations}. Therefore it is sufficient to prove
\[
\sup_{Q_{\theta/2}}v^p\le\frac{c_1}{\theta^{d+2}}\iint_{Q_{\theta}}v^p\dx\dt\quad\forall\,\theta>0\,.
\]
We recover~\eqref{Lem.Moser.Upper} by setting $\theta=r-\varrho$ and applying the above inequality to all cylinders in $Q_r$ obtained by translation from $Q_\theta$ with admissible transformations. The centers of the corresponding cylinders certainly cover $Q_\varrho$ and~\eqref{Lem.Moser.Upper} follows. Analogously, one reduces~\eqref{Lem.Moser.Lower} to the case $\varrho=1/2$ and $r=1$.
\begin{steps}
\stepitem\textit{Energy estimates.} By definition of weak solutions, we have
\be{Lem.Moser.Proof.1}
\iint_{Q_1}(-\varphi_t\,v+(\nabla\varphi)^T\,A\,\nabla v)\dx\dt=0
\ee
for any test function $\varphi$ which is compactly supported in $B_1=\{x\in\R^d\,:\,|x|<1\}$, for any fixed $t$. For any $p\in\R\setminus\{0,1\}$, we define
\[
w=v^{p/2}\quad\mbox{and}\quad \varphi=v^{p-1}\,\psi^2\,,
\]
where $\psi$ is a $C^\infty$ function which, like $\varphi$, has compact support in $B_1$ for fixed $t$. We rewrite~\eqref{Lem.Moser.Proof.1} in terms of $w$ and $\psi$ as
\be{Lem.Moser.Proof.2}
\tfrac14\iint \psi^2\,\partial_t w^2\dx\dt+\tfrac{p-1}{p}\iint\psi^2\,(\nabla w)^T\,A\,\nabla w\dx\dt
=-\iint\psi\,w\,(\nabla \psi)^T\,A\,\nabla w\dx\dt
\ee
where we may integrate over a slice $t_1<t<t_2$ of $Q_1$. From here on we adopt the convention that the integration domain is not specified whenever we integrate compactly supported functions on $\R^d$ or on $\R\times\R^d$. Setting $p\ne 1$,
\[
\varepsilon=\tfrac12\left|1-\tfrac1{p}\right|
\]
and recalling that
\[\label{Lem.Moser.Proof.3}
\psi\,w\,(\nabla \psi)^T\,A\,\nabla w\le\frac1{4\,\varepsilon}\,w^2 (\nabla \psi)^T\,A\,\nabla \psi+\varepsilon\,\psi^2\,(\nabla w)^T\,A\,\nabla w\,,
\]
we deduce from~\eqref{Lem.Moser.Proof.2} that
\begin{multline}\label{Lem.Moser.Proof.4}
\pm\,\frac14\iint \partial_t \left(\psi^2\,w^2\right)\dx\dt+\varepsilon\iint\psi^2\,(\nabla w)^T\,A\,\nabla w\dx\dt\\
\le\frac14\iint\(\frac1\varepsilon\,(\nabla \psi)^T\,A\,\nabla \psi+2\,|\psi\,\psi_t|\)w^2\dx\dt\,,
\end{multline}
where the plus sign in front of the first integral corresponds to the case $1/p<1$, while the minus sign corresponds to $1/p>1$. Recall that $p$ can take negative values. Using the ellipticity condition~\eqref{HE.coeff.lambdas} and~\eqref{Lem.Moser.Proof.2}, we deduce
\begin{multline}\label{Lem.Moser.Proof.5}
\pm\,\frac14\iint \partial_t \left(\psi^2\,w^2\right)\dx\dt+\lambda_0\,\varepsilon\iint\psi^2 \left|\nabla w\right|^2\dx\dt\\
\le\frac14\iint\(\frac{\lambda_1}\varepsilon\,|\nabla \psi|^2+2\,|\psi\,\psi_t|\)w^2\dx\dt\,.
\end{multline}
By choosing a suitable test function $\psi$, compactly supported in $Q_1$, and such that
\[\label{Lem.Moser.Proof.6b}
\| \nabla \psi \|_{\mathrm L^\infty(Q_1)}\le\frac2{r-\varrho}\quad\mbox{and}\quad
\|\psi_t \|_{\mathrm L^\infty(Q_1)}\le\frac{4}{r-\varrho}
\]
(see Lemma~\ref{lem.test.funct} in Appendix~\ref{Appendix:Truncation}), we have
\begin{multline}\label{Lem.Moser.Proof.7}
\frac14\iint\(\frac{\lambda_1}\varepsilon\,|\nabla \psi|^2+2\,|\psi\,\psi_t|\)w^2\dx\dt\le\(\frac{\lambda_1}\varepsilon\,\frac1{(r-\varrho)^2}+\frac1{r-\varrho}\)\iint_{\supp(\psi)}\!\!\!\!w^2\dx\dt\\
\le\frac1{(r-\varrho)^2}\(\frac{\lambda_1}\varepsilon+1\)\iint_{\supp(\psi)}\!\!\!\!w^2\dx\dt\,.
\end{multline}
for any $r$ and $\varrho$ such that $0<\varrho<r\le 1$. If $1/p>1$, let us take $\tilde{t}\in (-\varrho^2, \varrho^2)$ to be such that
\[
\int_{B_\varrho}w^2(\tilde{t},x)\dx \ge\frac14\sup_{0<|t|<\varrho^2}\int_{B_\varrho}w^2(t,x)\dx
\]
and choose $\psi$ such that $\psi(0,x)=1$ on $Q_\varrho$ and $\psi(0,x)=0$ outside $Q_r$, so that
\be{Lem.Moser.Proof.8}\begin{split}
\sup_{0<|t|<\varrho^2}\int_{B_\varrho}w^2(t,x)\dx &\le 4 \int_{B_\varrho}w^2(\tilde{t},x)\dx \\ &\le 4 \int_{B_r}w^2(\tilde{t},x)\,\psi^2(\tilde{t},x)\dx
\le 4 \iint_{Q_r} \partial_t \left(\psi^2\,w^2\right)\dx\dt\,.
\end{split}\ee
The same holds true if we replace $Q_r$ by $Q^+_r$ and $0<|t|<\varrho^2$ by $0<t<\varrho^2$.

If $1/p<1$ (which includes the case $p<0$), similar arguments yield
\be{Lem.Moser.Proof.9}
\sup_{-\varrho^2<t<0}\int_{B_\varrho}w^2(t,x)\dx
\le 4 \iint_{Q^-_r} \partial_t \left(\psi^2\,w^2\right)\dx\dt\,.
\ee

\stepitem\textit{Sobolev's inequality.} For any $f\in\mathrm H^1(Q_R)$, we have
\begin{multline}\label{Lem.Moser.Proof.10}
\iint_{Q_R}f^{2\,\gamma}\dx \dt\le\,2\,\pi^2\,\mathcal K
\left[\frac1{R^2}\iint_{Q_R} f^2\dx\dt+\iint_{Q_R}\big|\nabla f\big|^2\dx\dt\right]\\\times\sup_{|s|\in (0,\varrho^2)}\left[\int_{B_R}f^2(s,x)\dx\right]^\frac2d
\end{multline}
with $\gamma=1+2/d$ if $d\ge3$. If $d=1$ or $2$, we rely on~\eqref{sob.step2}, take $\gamma=5/3$, use H\"older's inequality with $2\,\gamma=10/3<4$ and $\pcc\ge4$ if $d=2$, $\pcc>4$ if $d=1$. In order to fix ideas, we take $\pcc=4$ if $d=2$ and $\pcc=8$ if $d=1$. Hence
\[
\iint_{Q_R}f^{2\,\gamma}\dx \dt \le |Q_1|^{1-\frac{2\,\gamma}{\pcc}}
\(\iint_{Q_R}f^{\pcc}\dx \dt\)^{1-\frac{2\,\gamma}{\pcc}}\,.
\]
According to~\eqref{omega-over-d}, we know that $|Q_1|=|(-1,1)|\,|B_1|\le 2\,\pi^2$ in any dimension.
\stepitem\textit{The case $p>0$ and $p\ne 1$.} Assume that $1/2\le\varrho<r\le1$. We work in the cylinder $Q_r=\supp(\psi)$. Here, we choose $\psi(t,x)=\varphi_{\rho, r}(|x|)\,\varphi_{\rho^2, r^2}(|t|)$ where $\varphi_{\rho, r}$ and $\varphi_{\rho^2, r^2}$ are defined in Appendix~\ref{Appendix:Truncation}, so that $\psi=1$ on $Q_\varrho$ and $\psi=0$ outside~$Q_r$.

Collecting inequalities~\eqref{Lem.Moser.Proof.5},~\eqref{Lem.Moser.Proof.7} and~\eqref{Lem.Moser.Proof.8}, we obtain
\[\label{Lem.Moser.Proof.11}
 \sup_{0<|t|<\varrho^2}\int_{B_\varrho}w^2(t,x)\dx+\lambda_0\,\varepsilon\iint_{Q_\varrho} \left|\nabla w\right|^2\dx\dt
\le\frac1{(r-\varrho)^2}\(\frac{\lambda_1}\varepsilon+1\)\iint_{Q_r}\,w^2\dx\dt\,.
\]
Now apply~\eqref{Lem.Moser.Proof.10} to $f=w$ and use the above estimates to get
\begin{align*}\label{Lem.Moser.Proof.12}
&\iint_{Q_\varrho}w^{2\,\gamma}\dx \dt\\
& \le\,2\,\pi^2\,\mathcal K
 \left[\frac1{\varrho^2}\iint_{Q_\varrho}\!\!\!\! w^2\dx\dt+\iint_{Q_\varrho}\big|\nabla w\big|^2\dx\dt\right]
\,\sup_{|s|\in (0,\varrho^2)}\(\int_{B_\varrho}\!\!\!\!w^2(s,x)\dx\)^\frac2d\\
&\le\,2\,\pi^2\,\mathcal K
 \left[\frac1{\varrho^2}\iint_{Q_\varrho} w^2\dx\dt+\frac1{(r-\varrho)^2\,\lambda_0\,\varepsilon}\(\tfrac{\lambda_1}\varepsilon+1\)\iint_{Q_r}\,w^2\dx\dt\right]\\
 &\hspace*{6cm}\times\(\frac1{(r-\varrho)^2}\(\tfrac{\lambda_1}\varepsilon+1\)\iint_{Q_r}\,w^2\dx\dt\)^\frac2d\\
&\le\,2\,\pi^2\,\mathcal K
 \left[\frac1{\varrho^2}+\frac1{(r-\varrho)^2\,\lambda_0\,\varepsilon}\(\tfrac{\lambda_1}\varepsilon+1\)\right]
\,\left[\frac1{(r-\varrho)^2}\(\tfrac{\lambda_1}\varepsilon+1\)\right]^\frac2d\(\iint_{Q_r}\,w^2\dx\dt\)^{\frac2{d}+1}\\
&\hspace*{6cm}:=A(d,\varrho,r,\lambda_0,\lambda_1,\varepsilon, 2\,\pi^2\,\mathcal K) \left( \iint_{Q_r}\,w^2\dx\dt\right)^{\gamma}\,.
\end{align*}
Using the fact that $\mu=\lambda_1+1/\lambda_0>1$ and $1/2\le\varrho<r\le1$, we can estimate the constant $A$ as follows:
\[\begin{split}
A &\le 2\,\pi^2\,\mathcal K\left[\frac1{\varrho^2}+\frac1{(r-\varrho)^2\,\lambda_0\,\varepsilon}\(\tfrac{\lambda_1}\varepsilon+1\)\right]
\(\frac1{(r-\varrho)^2}\(\tfrac{\lambda_1}\varepsilon+1\)\)^\frac2d\\
 &\le\frac{ 2\,\pi^2\,\mathcal K}{(r-\varrho)^{2\,\gamma}}\(\tfrac12+\tfrac{\lambda_1}{\varepsilon^2\,\lambda_0}\)
\(\tfrac{\lambda_1}\varepsilon\)^\frac2d\\
& \le\frac{2\,\pi^2\,\mathcal K}{(r-\varrho)^{2\,\gamma}}\(1+\tfrac{\mu^2}{\varepsilon^2}\)
\(\tfrac\mu\varepsilon\)^\frac2d
 \le\frac{2^5\,\mathcal K}{(r-\varrho)^{2\,\gamma}}\(1+\tfrac\mu\varepsilon\)^{\gamma+1}
\end{split}\]
where we have used that $\lambda_1/\lambda_0\le\frac12(\lambda_1^2+1/\lambda_0^2)\le\frac12(\lambda_1+1/\lambda_0)^2=\mu^2$ and $\pi\le4$.

\noindent\textit{First iteration step.} Recall that $w=v^{p/2}$, $\varepsilon=\frac12\left|1-\frac1p\right|$, and $\gamma=1+\frac2{d}$ if $d\ge3$, $\gamma=5/3$ if $d=1$ or $2$,  $\mu=\lambda_1+1/\lambda_0>1$ and $1/2\le\varrho<r\le1$. We can summarize these results by
\[\label{Lem.Moser.Proof.13}
\left(\iint_{Q_\varrho}v^{\gamma\,p}\dx \dt\right)^{\frac1{\gamma\,p}}
\le\(\frac{(2^5\,\mathcal K)^{\frac1{\gamma}}}{(r-\varrho)^2}\)^\frac1p\left(1+\tfrac\mu\varepsilon\right)^{\frac{\gamma+1}{\gamma\,p}}\left(\iint_{Q_r}\,v^p\dx\dt\right)^\frac1p
\]
for any $p>0$ such that $p\ne 1$. For any $n\in\NN$, let
\[\label{Lem.Moser.Proof.14}
\varrho_n=\frac12\left(1-2^{-n}\right)\,,\quad p_n=\frac{\gamma+1}2\,\gamma^{n-n_0}=p_0\,\gamma^n\,,\quad \varepsilon_n=\frac12\,\left|1-\frac1{p_n}\right|
\]
for some fixed $n_0\in\NN$. Note that $\varrho_0=1$, $p_0=\frac{1+\gamma}{2\,\gamma^{n_0}}$, $\varrho_n$ monotonically decrease to $1/2$, and $p_n$ monotonically increase to $\infty$. We observe that for all $n$, $n_0\in\NN$, we have $p_n\ne 1$ and, as a consequence, $\varepsilon_n>0$. Indeed, if $d\ge3$, $p_n=1$ would mean that
\[
n_0-n=\frac{\log\left(\frac{1+\gamma}2\right)}{\log\gamma}=\frac{\log\(1+\frac1d\)}{\log\(1+\frac2d\)}
\]
and, as a consequence, $0<n_0-n\le\log(4/3)/\log(5/3)<1$, a contradiction with the fact that $n$ and $n_0$ are integers. The same argument holds if $d=1$ or $d=2$ with $n_0-n=\log(4/3)/\log(5/3)$, as $\gamma=5/3$ corresponds to the value of $\gamma$ for $d=1$, $2$ or $3$. It is easy to check that for any $n\ge 0$,
\[\label{Lem.Moser.Proof.15}
|p_n-1|\ge\min\{p_{n_0}- 1,1-p_{n_0-1}\}=\min\left\{\tfrac1d,\tfrac1{d+2}\right\}=\tfrac1{d+2}\,.
\]
For an arbitrary $p\in(0,1/\mu)$, we choose
\[
n_0={\rm i.p.}\left(\frac{\log\left(\frac{1+\gamma}{2\,p}\right)}{\log\gamma}\right)+1
\]
where ${\rm i.p.}$ denotes the integer part, so that $0<p_0\le p<\gamma\,p_0$. By monotonicity of the $\mathrm L^q$ norms, that is,
\[
\left(\iint_{Q_r} v^{p_0}\,\frac{\dx\dt}{|Q_r|}\right)^{\frac1{p_0}}\le \left(\iint_{Q_r} v^p\, \frac{\dx\dt}{|Q_r|}\right)^\frac1p
\le \left(\iint_{Q_r} v^{\gamma\,p_0} \frac{\dx\dt}{|Q_r|}\right)^{\frac1{\gamma\,p_0}}\,,
\]
it is sufficient to prove inequality~\eqref{Lem.Moser.Upper} for $p=p_0$.

Let us define $p_\mu\in(p_0\,\mu,1]$ such that
\be{Lem.Moser.Proof.16}
1+\frac\mu{\varepsilon_n}=1+\frac{2\,\mu\,p_n}{|p_n-1|}=1+\frac{2\,\mu\,p_0\,\gamma^n}{|p_n-1|}\le 1+2\,(d+2)\,\gamma^n\le4\,(d+2)\,\gamma^n=4\,d\,\gamma^{n+1}
\ee
because $d+2=d\,\gamma$ if $d\ge3$ and $\gamma=5/3$ if $d\le3$. Finally, let us define
\[
Y_n:=\left(\iint_{Q_{\varrho_n}}v^{p_n}\dx \dt\right)^\frac1{p_n}\,,\;I_0=(2^5\,\mathcal K)^{\frac1{\gamma}} (4\,d\,\gamma^2)^\frac{\gamma+1}\gamma
\]
and $C=4\,\gamma^\frac{\gamma+1}\gamma$, $\theta=\frac1\gamma\in (0,1)$, and $\xi=\frac1{p_0}$.

\noindent\textit{Iteration.} Summing up, we have the following iterative inequality
\[
Y_n\le\(\frac{(2^5\,\mathcal K)^{\frac1\gamma}}{(\varrho_{n-1}-\varrho_n)^2}
\left(1+\tfrac\mu{\varepsilon_n}\right)^\frac{\gamma+1}\gamma\)^\frac1{p_{n-1}}\,Y_{n-1}\,.
\]
Using $\varrho_{n-1}-\varrho_n=2^{-n}$ and inequality~\eqref{Lem.Moser.Proof.16}, we obtain
\be{hyp.num}
Y_n\le I_{n-1}^{\,\xi\,\theta^{n-1}}\,Y_{n-1}\quad\mbox{with}\quad I_{n-1}\le I_0\,C^{\,n-1}\,.
\ee
\begin{lemma}[See~\cite{Bonforte2012a}]\label{lem.num.iter} The sequence $(Y_n)_{n\in\N}$ is a bounded sequence such that
\[\label{iteration.num}
Y_\infty:=\limsup_{n\to+\infty}Y_n\le I_0^{\frac{\xi}{1-\theta}}\,C^{\frac{\xi\,\theta}{(1-\theta)^2}}\,Y_0\,.
\]
\end{lemma}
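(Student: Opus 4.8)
The plan is to unravel the recursion~\eqref{hyp.num} by iterating the one-step inequality down to $Y_0$ and then summing the two resulting geometric-type series.

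First I would record that $I_0\ge1$ and $C\ge1$, which is immediate from their definitions since $\gamma>1$, $d\ge1$ and $\mathcal K>0$. Consequently the bound $I_{n-1}\le I_0\,C^{n-1}$, raised to the \emph{positive} power $\xi\,\theta^{n-1}$, is preserved (monotonicity of $x\mapsto x^a$ for $a>0$ on $[1,\infty)$), so~\eqref{hyp.num} gives
\[
Y_n\le\big(I_0\,C^{n-1}\big)^{\xi\,\theta^{n-1}}\,Y_{n-1}\,.
\]
Iterating from index $n$ down to $0$ produces the telescoped bound
\[
Y_n\le Y_0\prod_{k=0}^{n-1}\big(I_0\,C^{k}\big)^{\xi\,\theta^{k}}
=Y_0\,I_0^{\,\xi\sum_{k=0}^{n-1}\theta^{k}}\,C^{\,\xi\sum_{k=0}^{n-1}k\,\theta^{k}}\,.
\]

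Next I would estimate the two exponents. Since $\theta\in(0,1)$ and all terms are nonnegative, the partial sums are controlled by the full sums $\sum_{k=0}^{\infty}\theta^{k}=\frac1{1-\theta}$ and $\sum_{k=0}^{\infty}k\,\theta^{k}=\frac{\theta}{(1-\theta)^2}$, the latter being obtained by differentiating the geometric series and multiplying by $\theta$. Using once more that $I_0,C\ge1$, it follows that
\[
Y_n\le Y_0\,I_0^{\frac{\xi}{1-\theta}}\,C^{\frac{\xi\,\theta}{(1-\theta)^2}}\qquad\text{for every }n\in\N\,.
\]
The right-hand side does not depend on $n$, so $(Y_n)_{n\in\N}$ is a bounded (and nonnegative) sequence, and taking the $\limsup$ as $n\to\infty$ yields exactly the asserted estimate on $Y_\infty$.

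There is no genuine obstacle here: this is the standard Moser-type telescoping bootstrap. The only points demanding a little care are the verification that $I_0,C\ge1$ (so that the per-step estimate survives being raised to the power $\xi\,\theta^{n-1}$), the convergence of the two exponent series (which holds precisely because $\theta<1$), and the elementary evaluation of $\sum_{k\ge0}k\,\theta^{k}$. If one preferred, one could equivalently take logarithms after checking $Y_0>0$ (the case $Y_0=0$ forcing $Y_n=0$ for all $n$ and being trivial), but the multiplicative form above avoids even that minor case distinction.
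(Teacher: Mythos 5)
Your proof is correct and follows essentially the same telescoping argument as the paper's, unrolling the recursion~\eqref{hyp.num} and summing the geometric series $\sum\theta^k$ and $\sum k\,\theta^k$; the extra checks you make (that $I_0,C\ge1$ so that both the per-step bound and the replacement of partial sums by full sums are legitimate) are sound and merely make explicit what the paper leaves implicit.
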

The proof follows from the observation that
\begin{multline*}
Y_n  \le I_{n-1}^{\,\xi\,\theta^{n-1}}Y_{n-1}\le \left(I_0\,C^{\,n-1}\right)^{\,\xi\,\theta^{n-1}}\,Y_{n-1}
=I_0^{\,\xi\,\theta^{n-1}}\,C^{\,\xi\,(n-1)\,\theta^{n-1}}\,Y_{n-1}\\
\le \prod_{j=0}^{n-1}\,I_0^{\,\xi\,\theta^j} C^{\,\xi\,j\,\theta^j}\,Y_0
=I_0^{\,\xi\sum_{j=0}^{n-1}\theta^j} C^{\,\xi\sum_{j=0}^{n-1}j\,\theta^j}\,Y_0\,.
\end{multline*}

With the estimates
\[
\left(\iint_{Q_1}\,v^{p_0}\dx\dt\right)^{\frac1{p_0}}\le|Q_1|^{\frac1{p_0}-\frac1p}\,\left(\iint_{Q_1}\,v^{p}\dx\dt\right)^\frac1p\,,
\]
$\frac1{p_0}-\frac1p\le\frac{\gamma-1}p$ and $|Q_1|=2\,|B_1|\le2\,\pi^2$, we obtain 
\[
\sup_{Q_{1/2}}v\le\(2^5\,\mathcal K\,(4\,d\,\gamma^2)^{\gamma+1 }\)^{\frac1p\,\frac\gamma{\gamma-1}}
\(4^\gamma\,\gamma^{\gamma+1}\)^{\frac1p\,\frac\gamma{(\gamma-1)^2}}\,(2\,\pi^2)^\frac{\gamma-1}p
\left(\iint_{Q_1}\,v^{p}\dx\dt\right)^\frac1p
\]
which, using $2\,\pi^2\le24$  and after raising to the power $p$, is~\eqref{Lem.Moser.Upper} with $c_1$ given by~\eqref{Lem.Moser.constant}.

\stepitem\textit{The case $p<0$.} Assume that $1/2\le\varrho<r\le1$. We work in the cylinder $Q_r^-=\supp(\psi)$. Here, we choose $\phi(t,x)=\varphi_{\rho, r}(|x|)\,\varphi_{\rho^2, r^2}(-t)$, where $\varphi$ is defined as in Appendix~\ref{Appendix:Truncation}, so that $\psi=1$ on $Q_\varrho^-$ and $\psi=0$ outside $Q_r^-$.

After collecting~\eqref{Lem.Moser.Proof.5},~\eqref{Lem.Moser.Proof.7} and~\eqref{Lem.Moser.Proof.9}, we obtain
\[\label{Lem.Moser.Proof.19}
\sup_{-\varrho^2<t<0}\int_{B_\varrho}w^2(t,x)+\lambda_0\,\varepsilon\iint_{Q_\varrho^-} \left|\nabla w\right|^2\dx\dt
\le\frac1{(r-\varrho)^2}\(\tfrac{\lambda_1}\varepsilon+1\)\iint_{Q_r^-}\,w^2\dx\dt\,.
\]
Then the proof follows exactly the same scheme as for $p>0$, with the simplification that we do not have to take extra precautions in the choice of $p$. The constant $c_1$ is the same. 
\end{steps}
\end{proof}

\newpage\section{Logarithmic Estimates}\label{Sec:LogarithmicEstimates}

We prove now fine level set estimates on the solutions by Cacciop\-poli-type energy estimates. These estimates are based on a weighted Poincar\'e inequality (see Step~2 of the proof of Lemma~\ref{Lem.Log.Est}) originally due to F. John, as explained by J.~Moser in~\cite{Moser1964}). This is a fundamental step for this approach and for the more standard approach based on BMO and John-Nirenberg estimates. The level set estimates are better understood in terms of
\[
w=-\log v\,,
\]
the \emph{logarithm of $v$}, solution to~\eqref{HE.coeff}, which satisfies the nonlinear equation
\[
 w_t =-\frac{v_t}{v}=\sum_{i,j=1}^d\partial_i \big( A_{i,j}(t,x)\,\partial_j (-\log v) \big)- \sum_{i,j=1}^d (\partial_i \log v)\,A_{i,j}(t,x)\,(\partial_j \log v)\,,
\]
\emph{i.e.},
\be{HE.coeff.log}
w_t=\nabla\cdot\(A\,\nabla w\) -(\nabla w)^T\,A\,\nabla w\,.
\ee
All computations can be justified by computing with $-\log(\delta+v)$ for an arbitrarily small $\delta>0$ and passing to the limit as $\delta\to0_+$. We recall that $\mu=\lambda_1+1/\lambda_0$. Let us choose a test function $\psi$ as follows:
\be{Lem.Log.Est.3}
\psi(x):=\prod_{\nu=1}^d \chi_\nu(x_\nu),\quad\mbox{where}\quad
\chi_\nu(z):=\left\{\begin{array}{lll}
1 &\quad\mbox{if }|z|\le 1\\
2-|z|&\quad\mbox{if }1\le|z|\le 2\\
0 &\quad\mbox{if }|z|\ge 2\\
\end{array}\right.\,.
\ee
Note that this test function has convex super-level sets, or equivalently said, on any straight line segment, $\psi(x)$ assumes its minimum at an end point.

Even if~\eqref{HE.coeff.log} is a nonlinear equation, the nonlinear term actually helps. The reason for that lies in the following result.
\begin{lemma}\label{Lem.Log.Est} Assume that $\psi$ is a smooth compactly supported test function as in~\eqref{Lem.Log.Est.3}. If $w$ is a (sub)solution to~\eqref{HE.coeff.log} in
\[
\big\{(t,x)\in\R\times\R^d\,:\,|t|<1\,,\;|x|<2\big\}=(-1,1)\times B_2(0)\,,
\]
then there exist positive constants $a$ and $c_2(d)$ such that, for all $s>0$,
\begin{multline}\label{Lem.Log.Est.Ineq.w}
\left|\big\{(t,x)\in Q^+_1\,:\,w(t,x)>s-a\big\}\right|\\
+\left|\big\{(t,x)\in Q^-_1\,:\,w(t,x)<-s-a \big\}\right|\le c_2\,|B_1|\,\frac\mu s\,,
\end{multline}
where
\be{Lem.Log.Est.Ineq.a}
c_2=2^{d+2}\,3^d\,d\quad\mbox{and}\quad a=-\frac{\int w(0,x)\,\psi^2(x)\dx}{\int \psi^2(x)\dx}\,.
\ee
\end{lemma}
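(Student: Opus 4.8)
The plan is to follow Moser's classical argument for the logarithmic estimate, keeping every constant explicit. The starting point is the weak formulation of~\eqref{HE.coeff.log}: I would test the equation for $w=-\log v$ against $\psi^2$, where $\psi$ is the product test function fixed in~\eqref{Lem.Log.Est.3}. Since $\psi=\psi(x)$ does not depend on $t$, this yields
\[
\frac{\rm d}{{\rm d}t}\int w(t,x)\,\psi^2(x)\dx
=-\int(\nabla w)^T A\,\nabla(\psi^2)\dx-\int\psi^2\,(\nabla w)^T A\,\nabla w\dx\,.
\]
Using the elementary Cauchy–Schwarz/Young splitting $(\nabla w)^T A\,\nabla(\psi^2)=2\,\psi\,(\nabla w)^T A\,\nabla\psi\le\tfrac12\,\psi^2(\nabla w)^TA\,\nabla w+2\,(\nabla\psi)^TA\,\nabla\psi$ and the ellipticity bounds~\eqref{HE.coeff.lambdas}, one obtains the fundamental differential inequality
\[
\frac{\rm d}{{\rm d}t}\int w\,\psi^2\dx+\frac{\lambda_0}{2}\int\psi^2\,|\nabla w|^2\dx
\le 2\,\lambda_1\int|\nabla\psi|^2\dx=:\lambda_1\,K_d\,,
\]
with $K_d=2\int|\nabla\psi|^2\dx$ a purely dimensional constant that I would compute from~\eqref{Lem.Log.Est.3} (each factor $\chi_\nu$ contributes $\int|\chi_\nu'|^2=2$ on the slab $1<|x_\nu|<2$, times the measure of the complementary box, giving $K_d=2d\cdot 3^{d-1}\cdot 2=4\,d\,3^{d-1}$, whence the $2^{d+2}3^d d$ in~\eqref{Lem.Log.Est.Ineq.a} after the subsequent normalizations).

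Next I would introduce $W(t):=\int(w(t,x)-a)\,\psi^2(x)\dx\big/\!\int\psi^2\dx$ with $a$ as in~\eqref{Lem.Log.Est.Ineq.a}, so that $W(0)=0$, and rewrite the above as $W'(t)\le c\,\mu$ after absorbing the (dimensional) ratio $K_d/\!\int\psi^2$; more importantly, the dissipation term must be turned into control of the \emph{oscillation} of $w$ around its weighted mean. This is where the weighted Poincaré inequality (attributed to F. John, cited in Step~2 of the statement's preamble) enters: since $\psi^2$ has convex super-level sets, one has $\int|w-W|^2\,\psi^2\dx\le C_d\int\psi^2\,|\nabla w|^2\dx$ with an explicit $C_d$. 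Combining, I get, for $t>0$,
\[
W'(t)+\frac{\lambda_0}{2\,C_d\,\int\psi^2}\int(w-W)^2\psi^2\dx\le c_2'\,\frac{\lambda_1}{\int\psi^2}\,\cdot\!\int\psi^2\,,
\]
i.e. a closed inequality of the form $W'+\alpha\,\overline{(w-W)^2}\le\beta$ with $\overline{\cdot}$ the $\psi^2$-weighted average. Now for fixed $s>0$ I restrict attention to the set where $w>s-a$ (for $t>0$) or $w<-s-a$ (for $t<0$); on such points $(w-W)^2\ge (s-W)^2$ when $W<s$, so the dissipation term dominates $\alpha\,\mu_\psi\{w>s-a\}(s-W(t))^2/\!\int\psi^2$ where $\mu_\psi$ is $\psi^2\,dx$-measure. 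Dividing by $(s-W)^2$ and integrating in $t$ over the time-slices of $Q_1^+$ — using $W(0)=0$, $W$ increasing, and $\int_0^{t}\!\frac{W'}{(s-W)^2}\,=\,\frac1{s-W(t)}-\frac1s\le\frac1{s-\max W}$ bounded since $W(t)\le\beta t\le\beta$ — bounds $\int\!\!\int_{Q_1^+}\mathbf 1_{\{w>s-a\}}\psi^2\,dx\,dt$ by a constant times $1/s$. The same computation run backward in time with $-w$ (which, because $w$ is a \emph{sub}solution, is a supersolution and the nonlinear term again has the favorable sign) handles the $Q_1^-$ term. Finally, since $\psi\equiv 1$ on $B_1$, the $\psi^2$-weighted measure controls the Lebesgue measure on $Q_1^\pm\cap(\cdots)$, and I collect the dimensional constants into $c_2=2^{d+2}3^d d$ and the factor $\mu=\lambda_1+1/\lambda_0$ (it appears because the source term carries $\lambda_1$ while the dissipation coefficient carries $1/\lambda_0$, so the ratio governing the final bound is $\lambda_1/\lambda_0\cdot\cdots$, dominated by $\mu$ up to the stated constant).

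The main obstacle I anticipate is twofold. First, getting the weighted Poincaré constant $C_d$ for the specific piecewise-linear $\psi$ of~\eqref{Lem.Log.Est.3} explicitly — the proof via the convex-super-level-set property (integrating along line segments and using a one-dimensional Poincaré/Hardy estimate) is elementary but the bookkeeping of the constant through the $d$-fold product structure needs care, and this is exactly the step Moser leaves implicit. Second, the time-integration step requires knowing that $W(t)$ stays bounded away from $s$ on the whole slab $|t|<1$; this is immediate from $W'\le\beta$ and $W(0)=0$ only for the times/directions where we expect $w$ large, but one must be slightly careful that the estimate is genuinely \emph{one-sided} (we only control the super-level set for $t>0$ and the sub-level set for $t<0$, which is why the statement splits $Q_1^+$ and $Q_1^-$), and that the nonlinear term's sign is used correctly in each half. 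Everything else — the Cauchy–Schwarz splittings, the ellipticity substitutions, and the final collection of dimensional constants — is routine.
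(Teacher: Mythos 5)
Your overall strategy matches the paper's: test the equation for $w=-\log v$ against $\psi^2$, split the cross term with Cauchy--Schwarz and use ellipticity to obtain $\frac{\rd}{\dt}\int w\,\psi^2 + \frac{\lambda_0}{2}\int\psi^2|\nabla w|^2\le 2\lambda_1\int|\nabla\psi|^2$, apply the weighted Poincar\'e inequality for the weight $\psi^2$ (convex super-level sets), restrict to level sets, integrate the resulting ODI in time, and run the same argument with $t\mapsto -t$ for the $Q_1^-$ half. These are precisely the steps in the paper's proof.

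There is, however, a genuine gap in your time-integration step, and it is exactly the point you flag at the end but do not resolve. You keep the source term $\beta$ (which is of order $\mu$) on the right-hand side and write the inequality as $W'+\alpha\,\overline{(w-W)^2}\le\beta$; you then claim $W$ is \emph{increasing} and bound $\int_0^t \frac{W'}{(s-W)^2}\dt \le \frac{1}{s-\max W}$, noting $W(t)\le\beta t\le\beta$. But $W'\le\beta$ does not imply $W$ is increasing, and more importantly the bound $\frac{1}{s-\max W}$ is useless (or undefined) when $s\le\beta$, which is precisely the regime that matters since $\beta\sim\mu$ can be large and the lemma must hold for all $s>0$. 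The paper avoids this by \emph{absorbing the drift into the function itself}: one sets $\underline w(t,x)=w(t,x)+a-2^d\mu t$ (still a subsolution) and works with $W(t)=\overline{\underline w(t)}_\psi$, for which the differential inequality becomes
\[
\frac{\rd}{\dt}W(t)+\frac{1}{2^{d+1}3^d d\,\mu}\,\frac{1}{|B_1|}\int_{B_1}\bigl|\underline w(t,x)-W(t)\bigr|^2\dx\le 0\,,
\]
hence $W$ is \emph{nonincreasing} with $W(0)=0$, so $W(t)\le 0$ for $t\in(0,1)$. Then on the level set $\{\underline w>s\}$ one has $\underline w-W\ge s-W\ge s>0$, and integrating $-W'/(s-W)^2$ over $(0,1)$ gives $\frac{1}{s-W(0)}-\frac{1}{s-W(1)}\le\frac{1}{s}$ with no restriction on $s$. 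Without this drift subtraction the argument does not close.

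Two smaller remarks. First, once you work with the shifted function $\underline w$, the level set you actually bound is $\{\underline w>s\}$ rather than $\{w>s-a\}$ directly; the two differ by the deterministic drift $2^d\mu t$, and one must say a word about why this is harmless (a trivial bound when $s\lesssim 2^{d+1}\mu$, the inclusion $\{\underline w>s/2\}\supset\{w>s-a\}$ otherwise, absorbed by a factor of $2$ in $c_2$). Second, your explicit computation of the dimensional constant $K_d$ (via $\int|\chi_\nu'|^2=2$ and a ``complementary box'' of measure $3^{d-1}$) is not quite how the paper proceeds and is not numerically correct — the paper instead uses the crude bounds $\|\nabla\psi\|_\infty^2\le 1$, $|B_1|\le\int\psi^2\le 3^d|B_1|$, and the weighted-Poincar\'e constant $\lambda_b\le 2^{d-1}$ with diameter $D=2d$, and it is the combination of these that produces $c_2=2^{d+2}3^d d$; neither route is deep, but yours as written gives the wrong intermediate constant.
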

Equivalently, the above inequality stated in terms of $v$ reads
\begin{multline}\label{Lem.Log.Est.Ineq.u}
\left|\big\{(t,x)\in Q^+_1\,:\,\log v(t,x)<-s+a\big\}\right|\\
+\left|\big\{(t,x)\in Q^-_1\,:\,\log v(t,x)>s+a \big\}\right|\le c_2\,|B_1|\,\frac\mu s\,,
\end{multline}
where $a=\int \log v(0,x)\,\psi^2(x)\dx/\int \psi^2(x)\dx$.
\begin{proof}
\begin{steps}
We follow the proof of Lemma 2 of~\cite{Moser1971}, which in turn refers to~\cite[p.~121-123]{Moser1964}. We provide some minor improvements and quantify all constants. For better readability, we split the proof into several steps.

\stepitem\textit{Energy estimates.} Testing equation (or inequality)~\eqref{HE.coeff.log} with $\psi^2(x)$, we obtain
\begin{multline}\label{Lem.Log.Est.1}
\int \psi^2\,w(t_2)\dx-\int \psi^2\,w(t_1)\dx+\frac12\iint \psi^2\,(\nabla w)^T\,A\,\nabla w\dx\dt\\\le 2 \iint (\nabla \psi)^T\,A\,\nabla \psi\dx\dt\,.
\end{multline}
Using the conditions~\eqref{HE.coeff.lambdas}, we have that
\begin{align*}\label{Lem.Log.Est.2}
&\lambda_0 \iint \psi^2\,|\nabla w|^2 \dx\dt \le\iint \psi^2\,(\nabla w)^T\,A\,\nabla w\dx\dt\,,\\
&\iint (\nabla \psi)^T\,A\,\nabla \psi\dx\dt \le \lambda_1 \iint |\nabla \psi|^2 \dx\dt\,.
\end{align*}
Combining the above two inequalities, we obtain
\be{Lem.Log.Est.2b}\begin{split}
\int \psi^2\,w(t_2)\dx&-\int \psi^2\,w(t_1)+\frac{\lambda_0}2 \iint \psi^2\,|\nabla w|^2 \dx\dt\\
&\le 2\,\lambda_1 \iint |\nabla \psi|^2 \dx\dt\le 2^d\,\lambda_1\,(t_2-t_1)\,|B_1|\,\|\nabla \psi\|_{\mathrm L^\infty}^2\,.
\end{split}\ee

\stepitem\textit{Weighted Poincar\'e inequalities.} Let $b\ge 0$ be a continuous function with support of diameter $D=\mathrm{diam}(\supp(\psi))$ such that the domains $\{x\in\R^d\,:\,b(x)\ge \mbox{const}\}$ are convex. Then for any function $f\in\mathrm L^2_{b}$ with $|\nabla f|\in\mathrm L^2_b$, we have that
\[\label{Lem.Log.Est.4}
\int \left|f(x)-\overline{f}_b\right|^2 b(x)\dx \le \lambda_b\,D^2\int |\nabla f(x)|^2 b(x)\dx
\]
where
\[\label{Lem.Log.Est.4b}
\lambda_b=\frac{|\supp(b)|\,\|b\|_{\mathrm L^\infty}}{2\int b(x)\dx}
\quad\mbox{and}\quad \overline{f}_b=\frac{\int f(x)\,b(x)\dx}{\int b(x)\dx}\,.
\]
The proof follows from the unweighted Poincar\'e inequality: see for instance~\cite[Lemma~3]{Moser1964}.

\noindent\textit{Poincar\'e inequality with weight $\psi^2$.} We have that $D=2\,d$ in the particular case of $b=\psi^2$, where $\psi$ is given in~\eqref{Lem.Log.Est.3} and such that $0\le\psi\le1$, as for the constant $\lambda_b$ we have
\[
\lambda_b\le\frac{|B_2|}{2\int_{B_1}\psi^2\dx}=\frac{|B_2|}{2\,|B_1|}=2^{d-1}\,.
\]
Since $\|b\|_{\mathrm L^\infty}=\|\psi^2\|_{\mathrm L^\infty}=1$, $|B_1|\le \int\psi^2\dx\le3^d\,|B_1|$, we obtain
\be{Lem.Log.Est.5}
\iint \left|w(t,x)-\overline{w(t)}_\psi\right|^2\,\psi^2(x)\dx\dt \le 2^d\,d \iint |\nabla w(t,x)|^2\,\psi^2(x)\dx\dt\,,
\ee
with
\[
\overline{w(t)}_\psi:=\frac{\int w(t,x)\,\psi^2(x)\dx}{\int \psi^2(x)\dx}\,.
\]

\stepitem\textit{Differential inequality.} Let us recall that $\|\nabla \psi\|_{\mathrm L^\infty}^2\le 1$. We combine inequalities~\eqref{Lem.Log.Est.2b} and~\eqref{Lem.Log.Est.5} into
\[\label{Lem.Log.Est.6}\begin{split}
\int \psi^2\,w(t_2)\dx-\int \psi^2\,w(t_1)+\frac{\lambda_0}{2^{d+1}\,d} \int_{t_1}^{t_2}\int \left|w(t,x)-\overline{w(t)}_\psi\right|^2\,\psi^2(x)\dx\dt
\\ \le 2^d\,\lambda_1\,(t_2-t_1)\,|B_1|\,.
\end{split}\]
Recalling that $\psi=1$ on $B_1$ and the expression of $\overline{w(t)}_\psi$ given in~\eqref{Lem.Log.Est.5}, we obtain
\[\label{Lem.Log.Est.7}\begin{split}
\frac{\overline{w(t_2)}_\psi-\overline{w(t_1)}_\psi}{t_2-t_1}+\frac{\lambda_0}{2^{d+1}\,3^d\,d}\frac1{(t_2-t_1)\,|B_1|}\int_{t_1}^{t_2}\int_{B_1} \left|w(t,x)-\overline{w(t)}_\psi\right|^2 \dx\dt\\
\le\frac{2^d\,\lambda_1\,|B_1|}{\int\psi^2\dx}\le\,2^d\,\lambda_1\,.
\end{split}\]
Here we have used that $|B_1|\le \int\psi^2\dx\le3^d\,|B_1|$. Recalling that $\mu=\lambda_1+1/\lambda_0$, so that $\lambda_0\,\mu>1$, we obtain
\[\label{Lem.Log.Est.7b}\begin{split}
\frac{\overline{w(t_2)}_\psi-\overline{w(t_1)}_\psi}{t_2-t_1}+\frac1{2^{d+1}\,3^d\,d\,\mu}\frac1{(t_2-t_1)\,|B_1|}\int_{t_1}^{t_2}\int_{B_1} \left|w(t,x)-\overline{w(t)}_\psi\right|^2 \dx\dt \\
\le\frac{2^d\,\lambda_1\,|B_1|}{\int\psi^2\dx}\le\,2^d\,\mu\,.
\end{split}\]

Letting $t_2\to t_1$ we obtain the following differential inequality for $\overline{w(t)}_\psi $
\be{Lem.Log.Est.8}
\frac{\rd}{\dt}\overline{w(t)}_\psi+\frac1{2^{d+1}\,3^d\,d\,\mu}\frac1{|B_1|}\int_{B_1} \left|w(t,x)-\overline{w(t)}_\psi\right|^2\dx
\le\,2^d\,\mu\,.
\ee
The above inequality can be applied to
\[
\underline{w}(t,x)=w(t,x)-\overline{w(0)}_\psi-2^d\,\mu\,t\,.
\]
Notice that $\underline{w}$ is a subsolution to~\eqref{HE.coeff.log} since $w$ is. With $a=-\overline{w(0)}_\psi$, we can write~\eqref{Lem.Log.Est.8} in terms~of
\[
W(t)=\overline{w(t)}_\psi+a-2^d\,\mu\,t\quad\mbox{such that}\quad W(0)=0
\]
as
\[
\frac{\rd}{\dt}W(t)+\frac1{2^{d+1}\,3^d\,d\,\mu}\frac1{|B_1|}\int_{B_1} \left|\underline{w}(t,x)-W(t)\right|^2\dx
\le 0\,.
\]
An immediate consequence of the above inequality is that $W(t)\le W(0)=0$ for all $t\in (0,1)$.

Let $Q_s(t)=\{x\in B_1\,:\,w(t,x)>s\}$, for a given $t\in (0,1)$. For any $s>0$, we have
\[
\underline{w}(t,x)-W(t)\ge s-W(t)\ge 0\quad\forall\,x\in Q_s(t)\,,
\]
because $W(t)\le 0$ for $t\in (0,1)$. Using $\frac{\rd}{\dt}W=-\frac{\rd}{\dt}(s-W)$, the integration restricted to $Q_s$ in~\eqref{Lem.Log.Est.8} gives
\[\label{Lem.Log.Est.9}
\frac{\rd}{\dt}\big(s-W(t)\big) \ge\frac1{2^{d+1}\,3^d\,d\,\mu}\frac{|B_s(t)|}{|B_1|}\,\big(s-W(t)\big)^2\,.
\]
By integrating over $(0,1)$, it follows that
\begin{multline*}
\left|\left\{(t,x)\in Q_1^+\,:\,\underline{w}(t,x)>s\right\}\right|
=\iint_{\{\underline{w}>s\}\cap Q_1^+}\dx\dt=\int_0^1 |Q_s(t)|\dt\\
\le 2^{d+1}\,3^d\,d\,\mu\,|B_1|\left(\frac1{s-W(0)}-\frac1{s-W(1)}\right)\le 2^{d+1}\,3^d\,d\,|B_1|\,\frac\mu s\,,
\end{multline*}
which proves the first part of inequality~\eqref{Lem.Log.Est.Ineq.w}.

\stepitem\textit{Estimating the second term of inequality~\eqref{Lem.Log.Est.Ineq.w}.} We just replace $t$ by $-t$ and repeat the same proof. Upon setting $a=-\overline{w(0)}_\psi$, we obtain
\[\label{Lem.Log.Est.11}
 \left|\left\{(t,x)\in Q_1^-\,:\,w<-s-a\right\}\right| \le2^{d+1}\,3^d\,d\,|B_1|\,\frac\mu s\,.
 \]
\end{steps}
\end{proof}

\section{A lemma by Bombieri and Giusti}\label{Sec:Bombieri-Giusti}

To avoid the direct use of BMO spaces (whose embeddings and inequalities, like the celebrated John-Nirenberg inequality, may not have explicit constants), we use the parabolic version, due to J.~Moser, of a Lemma attributed to E.~Bombieri and E.~Giusti, in the elliptic setting: see~\cite{Bombieri_1972}. We use the version of~\cite[Lemma~3]{Moser1971}, which applies to measurable functions $f$, not necessarily solutions to a PDE, and to any family of domains $(\Omega_r)_{0<r<R}$ such that $\Omega_r\subset\Omega_R$.
\begin{lemma}[Bombieri-Giusti~\cite{Bombieri1972}, Moser~\cite{Moser1971}]\label{BGM.Lemma}
Let $\beta$, $c_1$, $\mu>0$, $c_2\ge 1/\mathrm{e}$, $\theta\in [1/2,1)$ and $p\in(0,1/\mu)$ be positive constants, and let $f>0$ be a positive measurable function defined on a neighborhood of $\Omega_1$ for which
\be{hyp.BGM.Lemma.1}
\sup_{\Omega_\varrho}f^p<\frac{c_1}{(r-\varrho)^\beta\,|\Omega_1|}\iint_{\Omega_r}f^p\dx\dt
\ee
for any $r$ and $\varrho$ such that $\theta\le\varrho<r\le 1$, and
\be{hyp.BGM.Lemma.2}
\big|\big\{ (t,x)\in\Omega_1\,:\,\log f>s\big\}\big|<c_2\,|\Omega_1|\,\frac\mu s\quad\forall\,s>0\,.
\ee
Let $\sigma$ be as in~\eqref{sigma}. Then we have
\be{BGM.Lemma.ineq}
\sup_{\Omega_\theta} f<\kappa_0^\mu\,,\quad\mbox{where}\quad \kappa_0:=\exp\left[2\,c_2\vee\frac{8\,c_1^3}{(1-\theta)^{2\,\beta}}\right]\,.
\ee
\end{lemma}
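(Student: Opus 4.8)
The plan is to combine the two hypotheses \eqref{hyp.BGM.Lemma.1} and \eqref{hyp.BGM.Lemma.2} through an iteration on the exponent $p$, which is the standard device in the Bombieri--Giusti argument. The guiding idea is that \eqref{hyp.BGM.Lemma.2} is a weak-$\mathrm L^\infty$ control on $\log f$, hence a bound on $\iint_{\Omega_1}f^q$ for all sufficiently small $q>0$ with a constant that degenerates only mildly as $q\to0$; meanwhile \eqref{hyp.BGM.Lemma.1} is a reverse H\"older / smoothing estimate that upgrades an $\mathrm L^p$ bound on a larger domain to an $\mathrm L^\infty$ bound on a slightly smaller one, at the cost of a power of $p$ and a power of $(r-\varrho)^{-\beta}$. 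Iterating \eqref{hyp.BGM.Lemma.1} along a sequence of exponents $p_j$ and a sequence of radii $\varrho_j\downarrow\theta$, one would in principle get $\sup_{\Omega_\theta}f$ controlled by $\iint_{\Omega_r}f^{p_0}$ for some fixed small $p_0$; but the accumulation of the constants $c_1/(r-\varrho)^\beta$ must be controlled, and this is where the positivity of $\log f$ off a small set is used to keep the $\mathrm L^{p_j}$ norms from blowing up.

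Concretely, I would first introduce the rescaled quantities $a_j:=(\,\iint_{\Omega_{\varrho_j}}f^{p_j}\,\dx\dt/|\Omega_1|\,)^{1/p_j}$ with, say, $\varrho_j=\theta+(1-\theta)2^{-j}$ and $p_j=p\,2^{-j}$ (or $p_j$ chosen geometrically decreasing so the radii shrink fast enough), and turn \eqref{hyp.BGM.Lemma.1} into a recursive inequality of the shape $\log^+ a_j \le 3\,\log^+ a_{j+1} + (\text{explicit terms in }c_1,\beta,1-\theta,j)$, using $\sup_{\Omega_\varrho}f^p = (\sup_{\Omega_\varrho}f)^p$ and monotonicity of $\mathrm L^q$ norms on the finite-measure domain $\Omega_\varrho$. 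Summing this recursion — it converges because each step multiplies the ``tail'' by a fixed factor less than $1$ after dividing by the exponent — produces $\sup_{\Omega_\theta}f \le \exp[\,C_1(c_1,\beta,1-\theta) + C_2\cdot a_{\infty}\text{-type term}\,]$, where the exponent $3$ is responsible for the cube on $c_1$ in \eqref{BGM.Lemma.ineq}. The constant $\sigma$ in \eqref{sigma} is precisely the value of the convergent series $\sum_j (3/4)^j((j+2)(j+1))^{2d+4}$ that bookkeeps the polynomial-in-$j$ losses coming from $\varrho_j-\varrho_{j+1}\sim 2^{-j}(1-\theta)$ raised to the power $\beta$ and from the exponents $p_j$; I would carry it as an abstract convergent sum and only at the end note that it equals $\sigma$.

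The remaining — and genuinely delicate — ingredient is to bound the ``base'' of the iteration, i.e.\ to show $\iint_{\Omega_1}f^{p_0}\dx\dt \le$ (something like) $2\,|\Omega_1|\,\mathrm e^{\,c_2\mu p_0}$ for a suitable small $p_0>0$, using only \eqref{hyp.BGM.Lemma.2}. This is the classical trick: write $\iint f^{p_0} = \iint \mathrm e^{p_0\log f}$, integrate the tail $|\{\log f>s\}|$ via the layer-cake formula $\iint f^{p_0} = |\Omega_1| + p_0\int_0^\infty \mathrm e^{p_0 s}|\{\log f > s\}|\,\dx\dt$, and use the hypothesis $|\{\log f>s\}|\le c_2|\Omega_1|\mu/s$ for $s$ large while bounding the contribution of small $s$ by $|\Omega_1|$; optimizing the split point (around $s\sim 1/p_0$, which forces $p_0\sim 1/(c_2\mu)$, consistent with the constraint $p\in(0,1/\mu)$) yields the factor $\mathrm e^{2c_2\mu}$ whose exponent becomes the $2c_2$ in $\kappa_0$. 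Feeding this bound, to the power $1/p_0$, into the conclusion of the iteration and taking the worse of the two resulting exponents gives $\sup_{\Omega_\theta}f < \kappa_0^\mu$ with $\kappa_0 = \exp[\,2c_2 \vee 8c_1^3/(1-\theta)^{2\beta}\,]$. I expect the main obstacle to be making the iteration recursion genuinely self-improving with an explicit constant — that is, choosing $p_j$ and $\varrho_j$ so that the factor in front of $\log^+ a_{j+1}$ is a fixed number ($3$) independent of $j$ while the additive error is summable — since a naive choice leaves a $j$-dependent multiplicative factor that destroys convergence; the condition $\theta\ge1/2$ and the strict inequalities in the hypotheses are there to give the small amount of slack this requires.
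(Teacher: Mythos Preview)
Your proposal has a genuine gap at the ``base'' step. The layer-cake computation you sketch,
\[
\iint_{\Omega_1} f^{p_0}\,\dx\dt \le |\Omega_1| + p_0\int_0^\infty e^{p_0 s}\,\big|\{\log f>s\}\big|\,\ds,
\]
combined with the hypothesis $|\{\log f>s\}|\le c_2\,|\Omega_1|\,\mu/s$, leads to the tail integral $\int_{s_\ast}^\infty e^{p_0 s}\,s^{-1}\,\ds$, which \emph{diverges} for every $p_0>0$ and every split point $s_\ast$. A weak-$\mathrm L^1$ bound on $\log f$ (decay $1/s$) is simply too slow to control any positive moment of $f$; no choice of $p_0\sim 1/(c_2\mu)$ or of split point rescues this. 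Your iteration on preset exponents $p_j=p\,2^{-j}$ therefore has no usable starting bound, and the scheme does not close.

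The paper avoids this trap by working directly with $\varphi(\varrho):=\sup_{\Omega_\varrho}\log f$ and, crucially, by choosing the exponent $p$ \emph{adaptively} in terms of $\varphi(r)$ at each step rather than along a preset sequence. On $\Omega_r$ one splits at the level $\tfrac12\,\varphi(r)$: on $\{\log f>\tfrac12\varphi(r)\}$ use the crude bound $f^p\le e^{p\,\varphi(r)}$ together with the measure estimate $\le 2c_2/\varphi(r)$; on the complement $f^p\le e^{\frac{p}{2}\varphi(r)}$. The choice $p=\tfrac{2}{\varphi(r)}\log\!\big(\varphi(r)/(2c_2)\big)$ balances the two pieces, yielding $\iint_{\Omega_r}f^p\le 2\,e^{\frac{p}{2}\varphi(r)}$. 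Feeding this into hypothesis~\eqref{hyp.BGM.Lemma.1} and dividing by $p$ gives, whenever $\varphi(r)>2c_2$, the self-improving inequality
\[
\varphi(\varrho)\le\tfrac34\,\varphi(r)+\frac{8\,c_1^3}{(r-\varrho)^{2\beta}},
\]
which one then iterates along radii $\varrho_j=1-\tfrac{1-\theta}{1+j}$. The two hypotheses are thus used \emph{together at every step}, not sequentially: the sup bound $\varphi(r)$ itself is the upper cutoff that makes the level-set estimate effective, and that is exactly what forces $p$ to depend on $\varphi(r)$ rather than being fixed in advance.
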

The difference between the upper bounds~\eqref{hyp.BGM.Lemma.1} and~\eqref{BGM.Lemma.ineq} is subtle. The first inequality depends on the solution on the whole space-time set $\Omega_r$ and is somehow implicit. By assumption~\eqref{hyp.BGM.Lemma.2}, if the set where $f$ is super-exponential has small measure, then on a slightly smaller set the solution is quantitatively bounded by an explicit and uniform constant, given by~\eqref{BGM.Lemma.ineq}.

\begin{proof}[Proof of Lemma~\ref{BGM.Lemma}] We sketch the relevant steps of the proof of~\cite[Lemma~3]{Moser1971}. Our goal is to provide some minor technical improvements and quantify all constants. Without loss of generality, after replacing $s$ by $s\,\mu$, we reduce the problem to the case $\mu=1$. Analogously, we also assume that $|\Omega_1|=1$. We define the nondecreasing function
\[
\varphi(\varrho)=\sup_{\Omega_\varrho}(\log f)\quad\forall\,\varrho\in[\theta,1)\,.
\]
We will prove that assumptions~\eqref{hyp.BGM.Lemma.1} and~\eqref{hyp.BGM.Lemma.2} imply the following dichotomy:\\
-- either $\varphi(r)\le 2\,c_2$ and there is nothing  to prove: $\kappa_0=\mathrm{e}^{2\,c_2}$,\\
-- or $\varphi(r)>2\,c_2$ and we have
\be{BGM.Lemma.01}
\varphi(\varrho)\le\frac34\,\varphi(r)+\frac{8\,c_1^3}{(r-\varrho)^{2\,\beta}}
\ee
for any $r$ and $\varrho$ such that $\theta\le\varrho<r\le 1$. We postpone the proof of~\eqref{BGM.Lemma.01} and observe that~\eqref{BGM.Lemma.01} can be iterated along a monotone increasing sequence $(\varrho_k)_{k\ge0}$ such that
\[
\theta\le \varrho_0<\varrho_1<\dots<\varrho_k\le 1
\]
for any $k\in\N$ to get
\[
\varphi(\varrho_0)<\frac34\,\varphi(\varrho_k)+8\,c_1^3\sum_{j=0}^{k-1}\left(\tfrac34\right)^j\frac1{\left(\varrho_{j+1}-\varrho_j\right)^{2\,\beta}}\,.
\]
By monotonicity, we have that $\varphi(\varrho_k)\le \varphi(1)<\infty$, so that in the limit as $k\to+\infty$, we obtain
\[
\varphi(\theta)\le \varphi(\varrho_0)\le 8\,c_1^3\sum_{j=0}^{\infty}\left(\tfrac34\right)^j\frac1{\left(\varrho_{j+1}-\varrho_j\right)^{2\,\beta}}
\]
provided the right-hand side converges. This convergence holds true for the choice
\[
\varrho_j=1-\frac{1-\theta}{1+j}\,,
\]
and in that case, the estimate
\[\label{BGM.Lemma.03}
\varphi(\theta)\le\frac{8\,c_1^3\,\sigma}{(1-\theta)^{2\,\beta}}:=\tilde\kappa_0
\]
implies inequality~\eqref{BGM.Lemma.ineq} with $\mu=1$ because
\[
\sup_{\Omega_\theta} f \le \exp\(\sup_{\Omega_\theta}(\log f)\)=\mathrm{e}^{\varphi(\theta)}\le \mathrm{e}^{\tilde\kappa_0}:=\kappa_0\,.
\]
In order to complete the proof, we have to prove inequality~\eqref{BGM.Lemma.01}.

\noindent\textit{Proof of Inequality~\eqref{BGM.Lemma.01}.} We are now under the assumption $\varphi(r)>2\,c_2$. We first estimate the integral
\be{BGM.Lemma.04}\begin{split}
\iint_{\Omega_r} f^p\dx\dt
&=\iint_{\{\log f>\frac12\,\varphi(r)\}} f^p\dx\dt
+\iint_{\{\log f\le\frac12\,\varphi(r)\}} f^p\dx\dt\\
&\le \mathrm{e}^{p\,\varphi(r)}\left|\left\{ (t,x)\in\Omega_1\,:\,\log f>\tfrac12\,\varphi(r)\right\}\right|+|\Omega_1|\,\mathrm{e}^{\frac p2\,\varphi(r)}\\
&\le\frac{2\,c_2}{\varphi(r)}\,\mathrm{e}^{p\,\varphi(r)}+\mathrm{e}^{\frac p2\,\varphi(r)}\,,
\end{split}
\ee
where we have estimated the first integral using that
\[
\sup_{\Omega_r} f^p \le \sup_{\Omega_r} \mathrm{e}^{p \log f} \le \mathrm{e}^{p \sup_{\Omega_r} \log f}=\mathrm{e}^{p\,\varphi(r)}\,.
\]
In the present case, assumption~\eqref{hyp.BGM.Lemma.2} reads:
\[
\left|\left\{ (t,x)\in\Omega_1\,:\,\log f>\tfrac12\,\varphi(r)\right\}\right|<\frac{2\,c_2}{\varphi(r)}.
\]
We choose
\[
p=\frac2{\varphi(r)}\log\left(\frac{\varphi(r)}{2\,c_2}\right)
\]
such that the last two terms of~\eqref{BGM.Lemma.04} are equal, which gives
\be{BGM.Lemma.05}
 \iint_{\Omega_r} f^p\dx\dt \le 2\,\mathrm{e}^{\frac p2\,\varphi(r)}\,.
\ee
The exponent $p$ is admissible, that is, $0<p<1/\mu=1$, if $\varphi(r)>2/\mathrm e$, which follows from the assumption $c_2>1/\mathrm e$. Now, using assumption~\eqref{hyp.BGM.Lemma.1} and inequality~\eqref{BGM.Lemma.05}, we obtain
\begin{multline*}
\varphi(\varrho)=\frac1p \sup_{\Omega_\varrho}\log(f^p)=\frac1p \log\(\sup_{\Omega_\varrho} f^p\)\\
\le\frac1p\log\(\frac{c_1}{(r-\varrho)^\beta}\iint_{\Omega_r}f^p \dx \dt\)\hspace*{4cm}\\
\le\frac1p\log\(\frac{2\,c_1\,\mathrm{e}^{\frac p2\,\varphi(r)}}{(r-\varrho)^\beta}\)
=\frac1p\log\(\frac{2\,c_1}{(r-\varrho)^\beta}\)+\frac12\,\varphi(r)\\
\hspace*{4cm}=\frac12\,\varphi(r)\(1+\frac{\log(2\,c_1)-\log(r-\varrho)^{\beta}}{\log(\varphi(r))-\log(2\,c_1)}\)\\
 \le\frac12\,\varphi(r)\(1+\frac12\)=\frac34\,\varphi(r)\,.
\end{multline*}
In the last line, we take
\[
\varphi(r)\ge\frac{8\,c_1^3}{(r-\varrho)^{2\,\beta}}
\]
so that
\be{BGM.Lemma.07}
\frac{\log(2\,c_1)-\log(r-\varrho)^{\beta}}{\log(\varphi(r))-\log(2\,c_1)}\le\frac12\,.
\ee
We again have that either $\varphi(r)<\frac{8\,c_1^3}{(r-\varrho)^{2\,\beta}}$ and~\eqref{BGM.Lemma.01} holds, or $\varphi(r)\ge\frac{8\,c_1^3}{(r-\varrho)^{2\,\beta}}$ and~\eqref{BGM.Lemma.07} holds, hence $\varphi(\varrho)\le\frac34\,\varphi(r)$. We conclude that~\eqref{BGM.Lemma.01} holds in all cases and this completes the proof.
\end{proof}

\section{Proof of Moser's Harnack inequality}\label{Sec:Moser-Harnack}

\begin{proof}[Proof of Theorem~\ref{Claim:3}]
\begin{steps}
We prove the Harnack inequality
\be{Harnack.Ineq}
\sup_{D^-}v\le \mathsf h^\mu\,\inf_{D^+} v
\ee
where $\mathsf h$ is as in~\eqref{h} and $D^\pm$ are the parabolic cylinders given by
\[\label{Parab.Cylinders.unitary}\begin{split}
&D=\left\{ |t|<1\,,\;|x|<2\right\}=(-1,1)\times B_2\,,\\
&D^+=\left\{\tfrac34<t<1\,,\;|x|<\tfrac12\right\}=\left(\tfrac34,1\right)\times B_{1/2}(0)\,,\\
&D^-=\left\{ -\tfrac34<t<-\tfrac14\,,\;|x|<\tfrac12\right\}=\left(-\tfrac34,-\tfrac14\right)\times B_{1/2}(0)\,.
\end{split}
\]
The general inequality~\eqref{harnack} follows by applying the admissible transformations corresponding to~\eqref{admissible.transformations}, which do not alter the values of $\lambda_0$, $\lambda_1$ and $\mu=\lambda_1+1/\lambda_0$.

Let $v$ be a positive solution to~\eqref{HE.coeff} and $a\in\R$ to be fixed later. In order to use Lemma~\ref{Lem.Moser} and Lemma~\ref{Lem.Log.Est}, we apply Lemma~\ref{BGM.Lemma} to
\[
v_+(t,x)=\mathrm{e}^{-a}\,v(t,x)\quad\mbox{and}\quad v_-(t,x)=\frac{\mathrm{e}^{+a}}{v(t,x)}\,.
\]
\stepitem\textit{Upper estimates.} Let us prove that
\be{Proof.Harnack.claim.1}
\sup_{D^-}v_+\le \ka_0^{\,\mu}
\ee
where $\ka_0$ has an explicit expression, given below in~\eqref{proof.Harnack.5}. For all $\varrho\in\left[1/2,1\right)$, let
\[\begin{split}
\Omega_\varrho&:=\left\{(t,x)\in\Omega_1\,:\,\left|t+\tfrac12\right|<\tfrac12\,\varrho^2\,,\;|x|<\varrho/\sqrt2\right\}\\
&=\left(-\tfrac12\,(\varrho^2+1),\tfrac12\,(\varrho^2-1)\right)\times B_{\varrho/\sqrt2}(0)=Q_{\varrho/\sqrt2}\left(-\tfrac12,0\right)\,.
\end{split}\]
Note that if $\varrho=1/\sqrt2$, then $\Omega_\varrho=\left(-3/4,-1/4\right)\times B_{1/2}(0)=D^-$, and also that $\Omega_\varrho\subset\Omega_1=(-1,0)\times B_1(0)=Q_1^-$ for any $\varrho\in\left[1/2,1\right)$.

The first assumption of Lemma~\ref{BGM.Lemma}, namely inequality~\eqref{hyp.BGM.Lemma.1} with $\beta=d+2$ is nothing but inequality~\eqref{Lem.Moser.Upper} of Lemma~\ref{Lem.Moser} applied to $\Omega_\varrho=Q_{\varrho/\sqrt2}\left(-1/2,0\right)$, that is,
\be{proof.Harnack.1}
\sup_{\Omega_\varrho} v_+^p \le\frac{c_1\,2^\frac{d+2}2}{(r-\varrho)^{d+2}}\iint_{\Omega_r}v_+^p \dx\dt\quad\forall\,p\in(0,1/\mu)\,.
\ee
Note that the results of Lemma~\ref{Lem.Moser} hold true for these cylinders as well, with the same constants, since $Q_{\varrho/\sqrt 2}(-1/2,0)$ can be obtained from $Q_{\varrho}(0,0)$ by means of admissible transformations~\eqref{admissible.transformations} which leave the class of equations unchanged, \emph{i.e.}, such that $\lambda_1$, $\lambda_0$ and $\mu$ are the same.

The second assumption of Lemma~\ref{BGM.Lemma}, namely inequality~\eqref{hyp.BGM.Lemma.2} of Lemma~\ref{BGM.Lemma}, if stated in terms of super-level sets of $\log v_+$, reads
\[\label{proof.Harnack.2}
\left|\left\{x\in\Omega_1\,:\,\log v_+>s\right\}\right|=\left|\{(t,x)\in Q^-_1\,:\,\log v>s+a \}\right|\le c_2\,|B_1|\,\frac\mu s
\]
according to Lemma~\ref{Lem.Log.Est}. Hence we are in the position to apply Lemma~\ref{BGM.Lemma} with $\theta=1/\sqrt 2$ to conclude that~\eqref{Proof.Harnack.claim.1} is true with
\be{proof.Harnack.5}
\ka_0:=\exp\left[2\,c_2\vee\frac{8\,c_1^3(\sqrt 2)^{3\,(d+2)}\,\sigma}{(1-1/\sqrt 2)^{2\,(d+2)}}\right]\,.
\ee
This concludes the first step.

\stepitem\textit{Lower estimates.} Let us prove that
\be{Proof.Harnack.claim.2}
\sup_{D^+}v_- \le \kb_0^{\,\mu}
\ee
where $\kb_0$ has an explicit expression, given below in~\eqref{proof.Harnack.10}. For all $\varrho\in\left[1/2,1\right)$, let
\[
\Omega_\varrho=\left\{(t,x)\in\Omega_1\,:\,0<1-t<\varrho^2\,,\;|x|<\varrho\right\}
=\left(1-\varrho^2, 1\right)\times B_{\varrho}(0)=Q^-_\varrho(1,0)\,.
\]
Note that if $\varrho=1/2$ then $\Omega_\varrho=\left(3/4,1\right)\times B_{1/2}(0)=D^+$, and $\Omega_\varrho\subset\Omega_1=(0,1)\times B_1(0)=Q_1^+$ for any $\varrho\in\left[1/2,1\right)$.

The first assumption of Lemma~\ref{BGM.Lemma}, namely inequality~\eqref{hyp.BGM.Lemma.1} with $\beta=d+2$ is nothing but inequality~\eqref{Lem.Moser.Lower} of Lemma~\ref{Lem.Moser} applied to $\Omega_\varrho=Q^-_\varrho(1,0)$
\[\label{proof.Harnack.6}
\sup_{\Omega_\varrho} v_-^p \le\frac{c_1}{(r-\varrho)^{d+2}}\iint_{\Omega_r}v_-^p \dx\dt\quad\forall\,p\in(-\tfrac1\mu,0)\,.
\]
Note that the results of Lemma~\ref{Lem.Moser} hold true for these cylinders as well, with the same constants, since $Q^-_\varrho(1,0)$ can be obtained from $Q_{\varrho}(0,0)$ by means of admissible transformations~\eqref{admissible.transformations}.

The second assumption of Lemma~\ref{BGM.Lemma}, namely inequality~\eqref{hyp.BGM.Lemma.2} of Lemma~\ref{BGM.Lemma}, if stated in terms of super-level sets of $\log v_-$, reads
\[\label{proof.Harnack.7}
\left|\left\{x\in\Omega_1\,:\,\log v_->s\right\}\right|=\left|\{(t,x)\in Q^+_1\,:\,\log v<-s+a\}\right|\le c_2\,|B_1|\,\frac\mu s\,.
\]
and follows from inequality~\eqref{Lem.Log.Est.Ineq.u} of Lemma~\ref{Lem.Log.Est}. With the same~$a$ and $c_2$, we are in the position to apply Lemma~\ref{BGM.Lemma} with $\theta=1/2$ to conclude that~\eqref{Proof.Harnack.claim.2} is true with
\be{proof.Harnack.10}
\kb_0:=\exp\left[2\,c_2\vee c_1^3 2^{2\,(d+2)+3}
\,\sigma\right]\,.
\ee
This concludes the second step.

\stepitem\textit{Harnack inequality and its constant.} We deduce from~\eqref{Proof.Harnack.claim.1} and~\eqref{Proof.Harnack.claim.2} that
\[\label{proof.Harnack.11}
\ka_0^{-\mu}\sup_{D^-}v\,\le \mathrm{e}^{a} \le \kb_0^{\,\mu}\,\inf_{D^+}v
\]
or, equivalently,
\[
\sup_{D^-}v\,\le (\ka_0\,\kb_0)^\mu\,\inf_{D^+}v=\widetilde{\mathsf h}^\mu\,\inf_{D^+}v\,.
\]
Using~\eqref{proof.Harnack.5} and~\eqref{proof.Harnack.10}, we compute
\[\label{HE.Harnack.Constant.proof}\begin{split}
\widetilde{\mathsf h}=\ka_0\,\kb_0&=\exp\left[2\,c_2\vee c_1^3\,2^{2\,(d+2)+3}\,\sigma\right]\,\exp\left[2\,c_2\vee\tfrac{8\,c_1^3\,(\sqrt 2)^{3\,(d+2)}}{(1-1/\sqrt 2)^{2\,(d+2)}}\,\sigma\right]\\
&\le \exp\left[4\,c_2+c_1^3\left( 2^{2\,(d+2)+3}+\tfrac{8\,(\sqrt 2)^{3\,(d+2)}}{(1-1/\sqrt2)^{2\,(d+2)}}\right)\,\sigma\right]\\
&=\exp\left[4\,c_2+c_1^3\,2^{2\,(d+2)+3}\left( 1+\tfrac{2^{ d+2 }}{(\sqrt 2-1)^{2\,(d+2)}}\right)\,\sigma\right]:=\mathsf h\,.
\end{split}
\]
The expressions of $c_1$ and $c_2$ are given in~\eqref{Lem.Moser.constant} and~\eqref{Lem.Log.Est.Ineq.a} respectively. The above expression of $\mathsf h$ agrees with the simplified expression of~\eqref{h}, which completes the proof.
\end{steps}\end{proof}

\section{Harnack inequality implies H\"older continuity}\label{sec:holder}

In this section we shall show a standard application of the Harnack inequality~\eqref{harnack}. It is well known that~\eqref{harnack} implies H\"older continuity of solutions to~\eqref{HE.coeff}, as in Moser's celebrated paper~\cite[pp.~108-109]{Moser1964}, here we keep track of all constants and obtain a quantitative expression of the (small) H\"older continuity exponent, which only depends on the Harnack constant, \emph{i.e.}, only depends on the dimension $d$ and on the ellipticity constants $\lambda_0, \lambda_1$~\eqref{HE.coeff.lambdas}.

Let $\Omega_1\subset\Omega_2\subset \R^d$ two bounded domains and let us consider $ Q_1:=(T_2, T_3)\times \Omega_1\subset (T_1, T_4)\times \Omega_2=:Q_2$, where $0\le T_1<T_2<T_3<T<4$. We define the \emph{parabolic distance} between $Q_1$ and $Q_2$ as
\be{parabolic-distance}
d(Q_1, Q_2):=\inf_{\substack{(t, x)\in Q_1 \\ (s, y)\in [T_1, T_4]\times\partial\Omega_2 \cup \{T_1, T_4\}\times \Omega_2}} |x-y|+|t-s|^\frac12\,.
\ee
In what follows, for simplicity, we shall consider $\Omega_1, \Omega_2$ as convex sets, however, this is not necessary and the main result of this section holds without such restriction.

\begin{theorem}\label{Claim:4} Let $v$ be a nonnegative solution of~\eqref{HE.coeff} on $Q_2$ and assume that $u$ satisfies~\eqref{HE.coeff.lambdas}.
Then we have
\be{holder-continuity-inequality}
\sup_{(t,x),(s,y)\in Q_1}\frac{|v(t,x)-v(s,y)|}{\big(|x-y|+|t-s|^{1/2}\big)^\nu}\le\,2\(\frac{128}{d(Q_1, Q_2)}\)^\nu\,\|v\|_{\mathrm L^\infty(Q_2)}\,.
\ee
where
\[\label{nu}
\nu:=\log_4\Big(\frac{\overline{\mathsf h}}{\overline{\mathsf h}-1}\Big)\,,
\]
and $\overline{\mathsf h}$ is as in~\eqref{h-bar}.
\end{theorem}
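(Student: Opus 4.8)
The plan is to derive H\"older continuity from the Harnack inequality~\eqref{harnack} by the classical oscillation-decay argument of~\cite[pp.~108-109]{Moser1964}, keeping all constants explicit. First I would fix a point $(t_0,x_0)\in Q_1$ and a radius $R>0$ small enough that the parabolic cylinder $\left(t_0-R^2,t_0+R^2\right)\times B_{2R}(x_0)$ is contained in $Q_2$; the admissible radii are controlled from below by the parabolic distance $d(Q_1,Q_2)$ from~\eqref{parabolic-distance}. On such a cylinder, set $M_R:=\sup_{D_R^-(t_0,x_0)}v$ and $m_R:=\inf_{D_R^-(t_0,x_0)}v$, and apply Theorem~\ref{Claim:3} to the two nonnegative solutions $v-m_{2R}$ and $M_{2R}-v$ of~\eqref{HE.coeff} (the equation is linear and these are again weak solutions). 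This yields
\[
M_R-\inf_{D_R^+}v\le\overline{\mathsf h}\,\bigl(M_R-\sup_{D_R^-}v\bigr)\,,\qquad
\sup_{D_R^+}v-m_R\le\overline{\mathsf h}\,\bigl(\inf_{D_R^-}v-m_R\bigr)\,,
\]
which upon adding gives the oscillation contraction $\mathrm{osc}_{D_R^-}v\le\frac{\overline{\mathsf h}-1}{\overline{\mathsf h}}\,\mathrm{osc}_{D_{2R}^-}v$ (after taking the cylinders in the right nested position).

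The second step is to iterate this contraction. Writing $\omega(R):=\mathrm{osc}_{D_R^-(t_0,x_0)}v$ and $\lambda:=\frac{\overline{\mathsf h}-1}{\overline{\mathsf h}}\in(0,1)$, the one-step estimate $\omega(R)\le\lambda\,\omega(4R)$ (note the cylinders $D_R^-$ and $D_{2R}^-$ must be realigned so the radius genuinely quarters, which is where the base $4$ and the exponent $\nu=\log_4\!\bigl(\overline{\mathsf h}/(\overline{\mathsf h}-1)\bigr)$ come from) iterates to $\omega(4^{-k}R_0)\le\lambda^k\,\omega(R_0)\le\lambda^k\cdot2\,\|v\|_{\mathrm L^\infty(Q_2)}$, where $R_0$ is the largest admissible radius, comparable to $d(Q_1,Q_2)$. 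For an arbitrary pair $(t,x),(s,y)\in Q_1$ one chooses $k$ so that $4^{-(k+1)}R_0<|x-y|+|t-s|^{1/2}\le4^{-k}R_0$ (when this quantity exceeds $R_0/\text{(const)}$ the bound is trivial from $|v(t,x)-v(s,y)|\le2\|v\|_{\mathrm L^\infty(Q_2)}$), observe that both points lie in a common cylinder $D_{4^{-k}R_0}^-$ of some center, and translate $\lambda^k=4^{-k\nu}=(4^{-k}R_0)^\nu R_0^{-\nu}\le\bigl(\text{const}/d(Q_1,Q_2)\bigr)^\nu\bigl(|x-y|+|t-s|^{1/2}\bigr)^\nu$. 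Tracking the geometric constants through the cylinder rescalings~\eqref{cylinder.harnack} and the admissibility requirement produces the numerical factor $128$ in~\eqref{holder-continuity-inequality}.

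The main obstacle, and the only genuinely delicate bookkeeping, is the geometry: the Harnack cylinders $D_R^\pm(t_0,x_0)$ from~\eqref{cylinder.harnack} are not concentric balls but time-shifted half-size balls sitting in the past and future, so to chain the oscillation estimate one must carefully choose a sequence of centers $(t_k,x_0)$ (shifting backward in time at each scale) so that the smaller cylinder $D_{4^{-(k+1)}R_0}^-(t_{k+1},x_0)$ is contained in $D_{4^{-k}R_0}^+(t_k,x_0)$ or $D_{4^{-k}R_0}^-(t_k,x_0)$, and so that any two prescribed points of $Q_1$ at parabolic distance $\rho$ lie in a common cylinder of radius $\approx\rho$. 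Making this nesting work forces the quartering of the radius (hence $\log_4$) and the constant $128$; convexity of $\Omega_1,\Omega_2$ is used only to guarantee that the connecting chain of cylinders stays inside $Q_2$, and as remarked after~\eqref{parabolic-distance} it can be dispensed with at the cost of a worse constant. Everything else — linearity of~\eqref{HE.coeff}, nonnegativity of $v-m$ and $M-v$, and the elementary estimate $\omega(R_0)\le2\|v\|_{\mathrm L^\infty(Q_2)}$ — is routine.
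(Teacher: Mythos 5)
Your proposal follows the same oscillation-decay scheme as the paper: apply the Harnack inequality of Theorem~\ref{Claim:3} to $M-v$ and $v-m$, extract a contraction factor $\zeta=(\overline{\mathsf h}-1)/\overline{\mathsf h}$, and iterate over a chain of nested parabolic cylinders with radius multiplied by $4$ at each step, which is exactly where $\nu=\log_4\bigl(\overline{\mathsf h}/(\overline{\mathsf h}-1)\bigr)$ comes from. Two small slips worth fixing in a final write-up: the maxima and minima must be taken over the full cylinder $D_R(t_0,x_0)=(t_0-R^2,t_0+R^2)\times B_{2R}(x_0)$, not over $D_R^-$, so that $M-v$ and $v-m$ are nonnegative on the whole Harnack domain; and the one-step contraction the paper extracts is $\mathrm{osc}_{D_R^+}v\le\zeta\,\mathrm{osc}_{D_R}v$ (with the stated Harnack inequalities, whose $D^+$/$D^-$ roles you have interchanged), which is then combined with the nesting $D_{R_i}(z,\tau_i)\subset D_{R_{i+1}}^+(z,\tau_{i+1})$, $R_{i+1}=4R_i$, $\tau_{i+1}=\tau_i-14R_i^2$, rather than a direct comparison of two $D^-$ cylinders. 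These are bookkeeping corrections, not gaps in the idea.
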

From the expression of $\mathsf h$ in~\eqref{h} it is clear that $\overline{\mathsf h}\ge\frac43$, from which we deduce that $\nu\in(0,1)$.
\begin{proof}
 We proceed in steps: in step 1 we shall show that inequality~\eqref{harnack} implies a \emph{reduction of oscillation} on cylinders of the form~\eqref{cylinder.harnack}. In step 2 we will iterate such reduction of oscillation and directly show estimate~\eqref{holder-continuity-inequality}.

\begin{steps}
\stepitem\textit{Reduction of oscillation.} Let us define $D_R(t_0,x_0)=(t_0-R^2,t_0+R^2)\times B_{2\,R}(x_0)$ and let $D_R^+(t_0,x_0)$, $D_R^-(t_0,x_0)$ be as in~\eqref{cylinder.harnack}. Let us define
\[\label{max.min}
M:=\max_{D_R(t_0,x_0)}v\,,\quad M^{\pm}=\max_{D_R^\pm(t_0,x_0)}v\,,\quad m=\min_{D_R(t_0,x_0)}v\,,\quad m^{\pm}=\max_{D_R^\pm(t_0,x_0)}v\,,
\]
and let us define the oscillations $\omega$ and $\omega^{+}$ namely
\[\label{oscillation}
\omega=M-m\quad\mbox{and}\quad\omega^{+}=M^+-m^+\,.
\]
We observe that the function $M-u$ and $u-m$ are nonnegative solution to~\eqref{HE.coeff} which also satisfy~\eqref{HE.coeff.lambdas} with $\lambda_0$ and $\lambda_1$ as in~\eqref{HE.coeff.lambdas}. We are therefore in the position to apply inequality~\eqref{harnack} to those functions and get
\[\begin{split}
M-m^{-}=\sup_{D_R^-(t_0,x_0)} M-u\,\le \overline{\mathsf h} \inf_{D_R^+(t_0,x_0)}u-M\,=\overline{\mathsf h}\left(M-M^{+}\right)\,,\\
M^{-}-m=\sup_{D_R^-(t_0,x_0)} u-m\,\le \overline{\mathsf h} \inf_{D_R^+(t_0,x_0)}u-m\,=\overline{\mathsf h}\left(m^{+}-m\right)\,.
\end{split}\]
Summing up the two above inequalities we get
\[
\omega\,\le\,\omega+(M^{-}-m^{-}) \le\,\overline{\mathsf h}\,\omega- \overline{\mathsf h}\,\omega^{+}
\]
which can be rewritten as
\begin{equation}\label{reduction.oscillation}
\omega^{+} \le\frac{\overline{\mathsf h}-1}{\overline{\mathsf h}}\,\omega\,=: \zeta\,\omega\,,
\end{equation}
which means that the oscillation on $D_R^+(t_0,x_0)$ is smaller then the oscillation on $D_R(t_0,x_0)$, recall that $\zeta<1$. In the next step we will iterate such inequality in a sequence of nested cylinders to get a \emph{geometric} reduction of oscillations.
\stepitem\textit{Iteration.} Let us define $\delta=d(Q_1,Q_2)/64$. The number $\delta$ has the following property:
\begin{equation}\label{property.distance}\tag{P}\begin{split}
&\mbox{Let $(t, x)\in Q_1$ and $(s, y)\in(0, \infty)\times\R^d$.}\\
&\mbox{If}\,\,|x-y|+|t-s|^\frac12\le \delta\,\,\mbox{then,}\,\,(s,y)\in Q_2\,.
\end{split}\end{equation}
Let us consider $(t,x), (s,y)\in Q_1$, then either
\begin{equation*}\label{A}\tag{A}
|x-y|+|t-s|^\frac12<\delta\,,
\end{equation*}
or
\begin{equation*}\tag{B}
|x-y|+|t-s|^\frac12 \ge \delta\,.
\end{equation*}
If~\eqref{A} happens, then there exists an integer $k\ge0$ such that
\[
\frac{\delta}{4^{k+1}}\le|x-y|+|t-s|^\frac12\le\frac{\delta}{4^k}\,.
\]
Let us define $z=\frac{x+y}2$ and $\tau_0=\frac{t+s}2$. Since $Q_1$ is a convex set we have that $z, \tau_0\in Q_1$. Let us define,
\[\label{choices.radii.times}
R_{i+1}:=4\,R_i\quad\tau_{i+1}:=\tau_{i}-14\,R_i^2\quad\forall\,i \in\{0, \cdots, k-1\}\,\,\mbox{where}\,\,R_0=\frac{\delta}{4^{k-1}}\,.
\]
With such choices we have that
\be{incapsulation}
D_{R_i}(z, \tau_{i})\subset D_{R_{i+1}}^{+}(z, \tau_{i+1})\quad\forall\,i\in\{0, \cdots, k-1\}\,,
\ee
and
\[
(t, x)\,,(s, y) \in D_{R_0}(z, \tau_0)\subset D_{R_1}^{+}(z, \tau_1)\,.
\]
We also observe that, as a consequence of property~\eqref{property.distance} we have that $D_{R_k}(z, \tau_k)\subset Q_2$. Let us define, for any $i\in\{0, \cdots, k-1\}$
\[
\omega_i:=\max_{D_{R_i}(z, \tau_i)}u - \min_{D_{R_i}(z, \tau_i)}u\quad\mbox{and}\quad\omega_i^{+}:=\max_{D^+_{R_i}(z, \tau_i)}u-\min_{D^+_{R_i}(z, \tau_i)}u\,.
\]
As a consequence of~\eqref{incapsulation}
\be{incapsulation-2}
\omega_i\le\omega^+_{i+1}\,.
\ee
By iterating inequalities~\eqref{incapsulation-2} - \eqref{reduction.oscillation}, we obtain that
\[\begin{split}
|v(t,x)-v(s,y)|\le \omega_0 \le \omega_1^{+}&\le \xi\,\omega_1 \\
& \le\,\xi^k\,\omega_k=\(\frac14\)^{k\,\nu} \omega_k \\
& \le\(\frac4{\delta}\)^\nu\(\frac{\delta}{4^{k+1}}\)^\nu\,\omega_k \\
& \le 2\(\frac4{\delta}\)^\nu\(|x-y|+|t-s|^\frac12\)^\nu\,\|v\|_{\mathrm L^\infty(Q_2)}\,.
\end{split}\]
This concludes the proof of~\eqref{holder-continuity-inequality} under assumption $(A)$.

Let us now assume that $(B)$ happens. In this case we have that
\[\begin{split}
|v(t,x)-v(s,y)|\le 2\,\|u\|_{\mathrm L^\infty(Q_2)}\,\frac{\delta^\nu}{\delta^\nu} &\le 2\,\|v\|_{\mathrm L^\infty(Q_2)}\(\frac{|x-y|+|t-s|^\frac12}{\delta}\)^\nu\\
&\le\,2\(\frac4{\delta}\)^\nu\(|x-y|+|t-s|^\frac12\)^\nu\,\|v\|_{\mathrm L^\infty(Q_2)}\,.
\end{split}\]
\end{steps}
The proof is then completed.
\end{proof}

\newpage\part{\Large Constants and estimates:\\[4pt] a handbook, with proofs}\label{PartII}
\setcounter{section}{0}

\section{Scope of the handbook}\label{Sec:Intro}

This part comes as supplementary material for the computations in~\cite{BDNS2020}. In order to facilitate the reading, the titles of the sections (but not of the sub-sections) are the same as in~\cite{BDNS2020}. However, some results are of independent interest: for this reason, we provide independent statements whenever possible.

\subsection{Definitions and notations}\label{Sec:def}

Let us consider the \emph{fast diffusion equation}
\be{FD}
\frac{\partial u}{\partial t}=\Delta u^m\,,\quad u(t=0,\cdot)=u_0
\ee
on $\R^d$ with $d\ge1$ and $m\in(m_1,1)$ with $m_1:=(d-1)/d$.

We introduce the following parameter that will be of constant use in this document
\be{param:alpha}
\alpha=2-d\,(1-m)=1+d\,(m-m_1)=d\,(m-m_c)\,\quad m_c=\frac{d}{d-2}.
\ee

\subsection{Outline}\label{Sec:Outline}

In Section~\ref{Sec:FDE}, we provide details on the comparison of the entropy - entropy production inequality with its linearized counterpart, \emph{i.e.}, the Hardy-Poincar\'e inequality: see Proposition~\ref{Prop:iEEP}. Section~\ref{Sec:local.estimates} is devoted to various results on the solutions of the fast diffusion equation~\eqref{FD} which are needed to establish the uniform convergence in relative error.
\begin{enumerate}
\item The local $\mathrm L^1$ bound of Lemma~\ref{HP-Lemma}, known as Herrero-Pierre estimate, is established with explicit constants in Section~\ref{sec.herrero.pierre}.
\item An explicit local upper bound is proved in Lemma~\ref{Lem:LocalSmoothingEffect} in Section~\ref{Sec:LocalUpperBounds}.
\item The Aleksandrov Reflection Principle is applied in Proposition~\ref{Local.Aleks} to prove a first local lower bound in Section~\ref{sec:Aleksandrov-Reflection-Principle}, which is extended in Section~\ref{Sec:Locallowerbounds}: see Lemma~\ref{Posit.Thm.FDE}.
\item Details on the inner estimate in terms of the free energy are collected in Section~\ref{Sec:InnerDetails}: see Proposition~\ref{Proposition11}.
\item In the Appendix~\ref{Appendix}, some useful observations are summarized or detailed: a \emph{user guide for the computation of the threshold time} collects in Appendix~\ref{Appendix:UserGuide} all necessary informations for the computation of the threshold time $t_\star$ of~\cite[Theorem~4]{BDNS2020} and~\cite[Proposition~12]{BDNS2020}; the numerical value of the optimal constant in the Gagliardo-Nirenberg inequality on the disk is the established in Appendix~\ref{Appendix:Numerics}; details on the truncation function are provided in Appendix~\ref{Appendix:Truncation}.
\end{enumerate}

\section{Relative entropy and fast diffusion flow}\label{Sec:FDE}

Here we deal with \emph{the asymptotic time layer improvement} of~\cite[Section~2.3]{BDNS2020}.

Let us consider the Barenblatt profile
\[
\mB(x)=\(1+|x|^2\)^\frac1{m-1}\quad\forall\,x\in\R^d
\]
of mass $\Mstar:=\ird{\mB(x)}$ and a nonnegative function $v\in\mathrm L^1(\R^d)$ such that $\ird{v(x)}=\Mstar$. The \emph{free energy} (or \emph{relative entropy}) and the \emph{Fisher information} (or \emph{relative entropy production}) are defined respectively by
\[
\mathcal F[v]:=\frac1{m-1}\ird{\(v^m-\mB^m-m\,\mB^{m-1}\,(v-\mB)\)}
\]
and
\[
\mathcal I[v]:=\frac m{1-m}\ird{v\,\left|\nabla v^{m-1}-\nabla\mB^{m-1}\right|^2}\,.
\]
We also define the \emph{linearized free energy} and the \emph{linearized Fisher information} by
\[
\mathsf F[g]:=\frac m2\ird{|g|^2\,\mB^{2-m}}\quad\mbox{and}\quad\mathsf I[g]:=m\,(1-m)\ird{|\nabla g|^2\,\mB}\,,
\]
in such a way that
\be{quadratization}
\mathsf F[g]=\lim_{\varepsilon\to0}\varepsilon^{-2}\,\mathcal F[\mB+\varepsilon\,\mB^{2-m}\,g]\quad\mbox{and}\quad\mathsf I[g]=\lim_{\varepsilon\to0}\varepsilon^{-2}\,\mathcal I[\mB+\varepsilon\,\mB^{2-m}\,g]\,.
\ee
By the \emph{Hardy-Poincar\'e inequality} of~\cite{Blanchet2009}, for any function $g\in\mathrm L^2(\R^d,\mB^{2-m}\,dx)$ such that $\nabla g\in\mathrm L^2(\R^d,\mB\,dx)$ and $\ird{g\,\mB^{2-m}}=0$, if $d\ge1$ and $m\in(m_1,1)$, then we have
\[
\mathsf I[g]\ge4\,\mathsf F[g]\,.
\]
This inequality can be proved by spectral methods as in~\cite{MR1982656,Denzler2005} or obtained as a consequence of the \emph{entropy - entropy production inequality}
\be{EEP}
\mathcal I[v]\ge4\,\mathcal F[v]
\ee
of~\cite{DelPino2002}, using~\eqref{quadratization}. If additionally we assume that $\ird{x\,g\,\mB^{2-m}}=0$, then we have the \emph{improved Hardy-Poincar\'e inequality}
\be{HP-PNAS}
\mathsf I[g]\ge4\,\alpha\,\mathsf F[g]\,.
\ee
where $\alpha=2-d\,(1-m)=d\,(m-m_c)$. Details can be found in~\cite[Lemma~1]{Bonforte2010c} (also see~\cite[Proposition~1]{Dolbeault2011a} and~\cite{Scheffer01,MR1982656,Denzler2005} for related spectral results).
 
Now let us consider
\be{gv}
g:=v\,\mB^{m-2}-\mB^{m-1}
\ee
and notice that $\ird{x\,v(x)}=0$ if and only if $\ird{x\,g\,\mB^{2-m}}=0$. Our goal is to deduce an improved version of~\eqref{EEP} from~\eqref{HP-PNAS}, in a neighborhood of the Barenblatt functions determined by a relative error measured in the uniform convergence norm. We choose the following numerical constant
\[
\chi:=\frac1{322}\quad\mbox{if}\quad d\ge2\,,\quad\chi:=\frac m{266+56\,m}\quad\mbox{if}\quad d=1\,.
\]
In view of~\cite{BDNS2020}, notice that $\chi\ge m/(266+56\,m)$ in any dimension.
\begin{proposition}\label{Prop:iEEP} Let $m\in(m_1,1)$ if $d\ge2$, $m\in(1/2,1)$ if $d=1$ and $\eta:=2\,d\,(m-m_1)$. If $v\in\mathrm L^1(\R^d)$ is nonnegative and such that $\ird{v(x)}=\Mstar$, $\ird{x\,v(x)}=0$, and
\be{Uniform}\tag{$H_{\varepsilon,T}$}
(1-\varepsilon)\,\mB\le v\le(1+\varepsilon)\,\mB\quad\mbox{a.e.}
\ee
for some $\varepsilon\in(0,\chi\,\eta)$, then
\be{Ineq:iEEP}
\mathcal I[v]\ge(4+\eta)\,\mathcal F[v]\,.
\ee
\end{proposition}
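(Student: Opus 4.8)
\textbf{Strategy.} The plan is to interpolate between the two available inequalities: the global entropy -- entropy production inequality~\eqref{EEP}, which gives $\mathcal I[v]\ge4\,\mathcal F[v]$ with no smallness assumption, and the improved Hardy--Poincar\'e inequality~\eqref{HP-PNAS}, which gives the \emph{sharp} constant $4\,\alpha$ at the linearized level (equivalently, an asymptotically improved constant for $v$ close to $\mB$, by~\eqref{quadratization}). Under $(H_{\varepsilon,T})$ the function $g$ of~\eqref{gv} is uniformly small, $\|g\,\mB^{m-1}\|_{\infty}\le\varepsilon$, hence $\mathcal F[v]$ and $\mathcal I[v]$ should be comparable to their quadratic versions $\mathsf F[g]$ and $\mathsf I[g]$ with errors controlled linearly in $\varepsilon$. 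The goal is to show that the defect between $\mathcal I[v]$ and $4\,\mathcal F[v]$ -- which at the linearized level is $\mathsf I[g]-4\,\mathsf F[g]\ge 4(\alpha-1)\,\mathsf F[g]>0$ when $\alpha>1$ -- survives the nonlinear perturbation with a quantitative lower bound of the form $\eta\,\mathcal F[v]$, provided $\varepsilon<\chi\,\eta$.

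\textbf{Key steps, in order.}
First I would record the expansions of the nonlinear functionals around $\mB$ in terms of $h:=g\,\mB^{m-1}=v/\mB-1$, which satisfies $|h|\le\varepsilon$ a.e.\ by~\eqref{Uniform}. Using the elementary inequalities for the maps $s\mapsto s^m$ and $s\mapsto s^{m-1}$ on $[1-\varepsilon,1+\varepsilon]$ (Taylor with integral remainder), one obtains two-sided bounds
\[
(1-C_1\varepsilon)\,\mathsf F[g]\le\mathcal F[v]\le(1+C_1\varepsilon)\,\mathsf F[g]\,,
\]
and similarly, after integrating by parts and using the drift structure of the Fisher information,
\[
\mathcal I[v]\ge(1-C_2\varepsilon)\,\mathsf I[g]
\]
for explicit dimension- and $m$-dependent constants $C_1,C_2$ (this is where the precise value $\chi=1/322$, resp.\ $\chi=m/(266+56m)$, will be pinned down — these are exactly the constants that make the final inequality close). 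Second, I would combine these with the improved Hardy--Poincar\'e inequality~\eqref{HP-PNAS}, which applies because the hypotheses $\ird{v}=\Mstar$ and $\ird{x\,v}=0$ translate (as noted right before the proposition) into $\ird{g\,\mB^{2-m}}=0$ and $\ird{x\,g\,\mB^{2-m}}=0$. This yields
\[
\mathcal I[v]\ge(1-C_2\varepsilon)\,4\,\alpha\,\mathsf F[g]\ge\frac{(1-C_2\varepsilon)\,4\,\alpha}{1+C_1\varepsilon}\,\mathcal F[v]\,.
\]
Third, it remains to check the arithmetic: with $\eta=2\,d\,(m-m_1)$ one has $\alpha=1+\tfrac12\eta$, so $4\,\alpha=4+2\,\eta$, and one needs
\[
\frac{(1-C_2\varepsilon)(4+2\,\eta)}{1+C_1\varepsilon}\ge 4+\eta\,,
\]
which reduces to a linear condition on $\varepsilon$ of the form $\varepsilon\le c(\eta)$; verifying that $\chi\,\eta$ lies below this threshold (using $\alpha\le 2$, i.e.\ $\eta\le 2$, and $m_1<m<1$ to bound $C_1,C_2$) closes the proof. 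One should also separately justify that all integrals are finite and the integrations by parts are legitimate — this is the content of the parenthetical remark after~\eqref{HP-PNAS} and can be handled by the density/approximation argument alluded to there, or by noting $v$ is trapped between multiples of $\mB$.

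\textbf{Main obstacle.} The genuinely delicate point is \emph{not} the linearization of $\mathcal F$ (which is a pointwise convexity estimate on $s^m$) but the lower bound $\mathcal I[v]\ge(1-C_2\varepsilon)\,\mathsf I[g]$: the Fisher information involves $\nabla v^{m-1}-\nabla\mB^{m-1}$ weighted by $v$, and to compare it with $\int|\nabla g|^2\,\mB$ one must expand $v^{m-1}$ to second order, integrate by parts to absorb the cross term, and control the remainder using only the $\mathrm L^\infty$ bound on $h$ — \emph{without} any a priori bound on $\nabla h$. This forces a careful choice of how to split terms so that every error is proportional to $\varepsilon$ times the \emph{good} quadratic quantity $\int|\nabla g|^2\,\mB$ (or $\mathsf F[g]$ after invoking~\eqref{HP-PNAS} again), rather than something uncontrolled; getting the constant $C_2$ small enough that $\chi\,\eta$ works in all dimensions $d\ge1$ is the crux, and is presumably why the $d=1$ case needs the separate, $m$-dependent value of $\chi$.
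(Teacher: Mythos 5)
Your proposal follows exactly the paper's route: compare $\mathcal F,\mathcal I$ with the linearized $\mathsf F,\mathsf I$ under the uniform two-sided bound on $v/\mB$, invoke the improved Hardy--Poincar\'e inequality~\eqref{HP-PNAS} (noting, as you do, that the orthogonality conditions on $v$ translate into the required conditions on $g$), and then close with a linear-in-$\varepsilon$ arithmetic using $4\alpha=4+2\eta$. The one structural point worth flagging is the form of the Fisher-information comparison: the paper does not prove a direct bound of the form $\mathcal I[v]\ge(1-C_2\varepsilon)\,\mathsf I[g]$; instead it quotes from~\cite[Lemma~7]{Blanchet2009} the inequality $\mathsf I[g]\le s_1(\varepsilon)\,\mathcal I[v]+s_2(\varepsilon)\,\mathsf F[g]$, in which the error term is a multiple of $\mathsf F[g]$ rather than of $\mathsf I[g]$ --- precisely because, as you correctly identify in the ``main obstacle'' paragraph, the expansion of $\nabla v^{m-1}$ forces an integration by parts that trades gradient terms for weighted $\mathrm L^2$ terms. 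Your version can be recovered by absorbing $s_2\,\mathsf F[g]$ via $\mathsf F[g]\le\frac1{4\alpha}\,\mathsf I[g]$, but the paper avoids that extra step by keeping $\mathsf F[g]$ explicit and defining $f(\varepsilon)=\big(4\alpha(1-\varepsilon)^a-4s_1(\varepsilon)-(1+\varepsilon)^a s_2(\varepsilon)\big)/s_1(\varepsilon)$, then bounding $f$ from below by concavity/convexity estimates on $\varepsilon\mapsto s_1,s_2$; this is where the explicit constants $\chi=1/322$ (resp.\ $m/(266+56m)$ for $d=1$) come from, exactly as you anticipate.
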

\begin{proof} We estimate the free energy $\mathcal F$ and the Fisher information $\mathcal I$ in terms of their linearized counterparts $\mathsf F$ and $\mathsf I$ as in~\cite{Blanchet2009}. Let $g$ be as in~\eqref{gv}. Under Assumption~\eqref{Uniform}, we deduce by a simple Taylor expansion that
\be{equivalence-entropy-l2}
(1+\varepsilon)^{-a}\,\mathsf F[g]\le\mathcal F[v]\le(1-\varepsilon)^{-a}\,\mathsf F[g]
\ee
as in~\cite[Lemma~3]{Blanchet2009}, where $a=2-m$. Slightly more complicated but still elementary computations based on~\cite[Lemma~7]{Blanchet2009} show that
\be{linear.nonlinear.fisher.inq}
\mathsf I[g]\le s_1(\varepsilon)\,\mathcal I[v]+s_2(\varepsilon)\,\mathsf F[g]\,,
\ee
where
\[
s_1(\varepsilon):=\frac{(1+\varepsilon)^{2\,a}}{1-\varepsilon}\quad\mbox{and}\quad s_2(\varepsilon):=\frac{2\,d}m\,(1-m)^2\(\frac{(1+\varepsilon)^{2\,a}}{(1-\varepsilon)^{2\,a}}-1\).
\]
Collecting~\eqref{HP-PNAS},~\eqref{equivalence-entropy-l2} and~\eqref{linear.nonlinear.fisher.inq}, elementary computations show that~\eqref{Ineq:iEEP} holds with $\eta=f(\varepsilon)$, where
\[
f(\varepsilon)=\frac{4\,\alpha\,(1-\varepsilon)^a\,-4\,s_1(\varepsilon)-(1+\varepsilon)^a\,s_2(\varepsilon)}{s_1(\varepsilon)}\,.
\]
We claim that
\[
\max_{\varepsilon\in(0,\chi\,\eta)}f(\varepsilon)\ge2\,d\,(m-m_1)\,.
\]
Let us consider
\[
g(\varepsilon):=1-\frac{(1-\varepsilon)^{1+a}}{(1+\varepsilon)^{2\,a}}\quad\mbox{and}\quad h(\varepsilon):=\frac{1-\varepsilon}{(1+\varepsilon)^a}\(\frac{(1+\varepsilon)^{2\,a}}{(1-\varepsilon)^{2\,a}}-1\)
\]
and observe that $g$ is concave and $g(\varepsilon)\le g'(0)\,\varepsilon=(1+3\,a)\,\varepsilon\le7\,\varepsilon$ for any $\varepsilon\in[0,1]$ and $a\in[1,2]$, while $h$ is convex and such that $h(\varepsilon)\le h'(1/2)\,\varepsilon$ for any $\varepsilon\in[0,1/2]$ with $h'(1/2)\le133$ for any $a\in[1,2]$. By writing
\[
f(\varepsilon)=2\,\eta-4\,\alpha\,g(\varepsilon)-2\,\frac dm\,(1-m)^2\,h(\varepsilon)\,,
\]
and after observing that $4\,\alpha\le8$ and $\frac dm\,(1-m)^2\le1$ if $d\ge2$ and $m\in(m_1,1)$, $\frac dm\,(1-m)^2\le\frac1m$ if $d=1$ and $m\in(0,1)$, we conclude that
\[
f(\varepsilon)\ge2\,\eta-\tfrac\varepsilon\chi\ge\eta\quad\forall\,\varepsilon\in(0,\chi\,\eta)\,.
\]
\end{proof}

\medskip\noindent Let us conclude this section by some observations:\\[4pt]
$\rhd$ Proposition~\ref{Prop:iEEP} is an \emph{improved entropy - entropy production inequality}. It can be understood as a \emph{stability result} for the standard entropy - entropy production inequality, which is equivalent to the Gagliardo-Nirenberg inequalities
\be{GN}
\nrm{\nabla f}2^\theta\,\nrm f{p+1}^{1-\theta}\ge\mathcal C_{\mathrm{GN}}\,\nrm f{2\,p}\quad\forall\,f\in\mathcal D(\R^d)\,,
\ee
where the exponent is $\theta=\frac{p-1}p\,\frac d{d+2-p\,(d-2)}$, $p$ is in the range $(1,p^*)$ with $p^*=+\infty$ if $d=1$ or $2$, and $p^*=d/(d-2)$ if $d\ge3$, and $\mathcal D(\R^d)$ denotes the set of smooth functions on $\R^d$ with compact support. Similar results with less explicit estimates can be found in~\cite{Blanchet2009,Bonforte2010c}. Compared to~\cite[Theorems~1 and~15]{BDNS2020}, this is a much weaker result in the sense that the admissible neighborhood in which we can state the stability result is somewhat artificial, or at least very restrictive. However, this makes sense in the asymptotic time layer as $t\to+\infty$, from the point of view of the nonlinear flow.\\[4pt]
$\rhd$ According to~\cite{DelPino2002}, it is known that~\eqref{GN} is equivalent to~\eqref{EEP} if $m$ and $p$ are such that
\[
p=\frac1{2\,m-1}\,.
\]
The fact that $p$ is in the interval $(1,p^*)$ is equivalent to $m\in(m_1,1)$ if $d\ge2$ and $m\in(1/2,1)$ if $d=1$. In order to define $\ird{|x|^2\,\mB}$, there is the condition that $m>d/(d+2)$, which is an additional restriction only in dimension $d=1$. This is why in Section~\ref{Sec:local.estimates} we shall only consider the case $m>1/3$ if $d=1$.
\\[4pt]
$\rhd$ In~\cite[Proposition~3]{BDNS2020}, the result is stated for a solution to the \emph{fast diffusion equation} in self-similar variables
\be{FDr}
\frac{\partial v}{\partial t}+\nabla\(v\,\nabla v^{m-1}\)=2\,\nabla\cdot(x\,v)\,,\quad v(t=0,\cdot)=v_0\,.
\ee
With the same assumptions and definitions as in Proposition~\ref{Prop:iEEP}, \emph{if~$v$ is a non-negative solution to~\eqref{FDr} of mass~$\Mstar$,~with
\be{UniformBis}
(1-\varepsilon)\,\mB\le v(t,\cdot)\le(1+\varepsilon)\,\mB\quad\forall\,t\ge T
\ee
for some $\varepsilon\in(0,\chi\,\eta)$ and $T>0$, and such that $\ird{x\,v(t,x)}=0$, then we have
\be{Prop.Rayleigh.Ineq.bis}
\mathcal I[v(t,\cdot)]\ge(4+\eta)\,\mathcal F[v(t,\cdot)]\quad\forall\,t\ge T\,.
\ee}\noindent
This result is equivalent to~Proposition~\ref{Prop:iEEP}.\\[4pt]
$\rhd$ The admissible neighborhood of $\mB$ is in fact stable under the action of the flow defined by~\eqref{FDr}. The improved inequality~\eqref{Prop.Rayleigh.Ineq.bis} holds if~\eqref{UniformBis} holds at $t=T$ and if~$v$ is a non-negative solution to~\eqref{FDr} of mass~$\Mstar$,~with $\ird{x\,v_0(x)}=0$. The condition~\eqref{UniformBis} is very restrictive if we impose it with $T=0$ as in~\cite{Blanchet2009,Bonforte2010c}. A key observation in~\cite{BDNS2020} is that it is satisfied in the asymptotic time layer as $t\to+\infty$ and that we can provide an explicit estimate of $T$.
\\[4pt]
$\rhd$ Condition~\eqref{UniformBis} is slightly different from the one appearing in~\cite{Blanchet2009,Bonforte2010c}. In those papers the initial data is assumed to be such that
\[
\(c_1+|x|^2\)^\frac1{m-1}\le v(0, x) \le\(c_2+|x|^2\)^\frac1{m-1}\quad\forall\,x\in\R^d
\] 
for some positive $c_1$ and $c_2$ such that $0<c_2\le1\le c_1$. The above condition is much stronger than~\eqref{UniformBis} as it guarantees that $(v/\mB-1)\in\mathrm L^q(\R^d)$ for some $q<\infty$. In~\cite{BDNS2020}, we only need that $(v/\mB-1)\in\mathrm L^\infty(\R^d)$.

\section{Uniform convergence in relative error}\label{Sec:local.estimates}

We state and prove here the local upper and lower bounds that has been used in~\cite[Section 3.2]{BDNS2020}, we provide the explicit constants here, following the proofs of~\cite{Bonforte2006,Bonforte2010b}. Comparing to the existing literature we give simpler proofs and provide explicit constants. Some of the results presented here were already contained in the PhD Thesis of N. Simonov,~\cite[Chapters 1,2 and 6]{Simonov2020}.

In this section we consider solutions to the Cauchy problem for the Fast Diffusion Equation posed in the whole Euclidean space $\RR^d$, in the range $m_1<m<1$, $d\ge1$. Global existence of non-negative solutions of~\eqref{FD} is established in~\cite{Herrero1985}. Much more is known on~\eqref{FD} and we refer to~\cite{Vazquez2006} for a general overview. Recall that we always assume $u_0\in\mathrm L^1(\RR^d)$.

\subsection{Mass displacement estimates: local \texorpdfstring{$\mathrm L^1$}{L1} bounds}\label{sec.herrero.pierre}
We prove the Lemma as needed in the proof of~\cite[Theorem~1.1]{Bonforte2006} (slight modification of the original result of Herrero-Pierre~\cite[Lemma 3.1]{Herrero1985}). Our main task is to derive have an explicit expression of the constants appearing the estimate.
\begin{lemma} \label{HP-Lemma}
Let $m \in (0, 1)$ and $u(t,x)$ be a nonnegative solution to the Cauchy problem~\eqref{FD}. Then, for any $ t, \tau \ge 0$ and $r,R>0$ such that $\varrho_0 r\ge 2\,R$ for some $\varrho_0>0$, we have
\be{Herrero.Pierre.123}
\int_{B_{2\,R}(x_0)} u(t, x)\dx \le 2^{\frac{m}{1-m}}\,\int_{B_{2\,R+r}(x_0)}{u(\tau,x)\dx}+\cc\,\frac{|t-\tau|^{\frac1{1-m}}}{r^{\frac{2-d\,(1-m)}{1-m}}}\,,
\ee
where 
\be{C3.constant}
\cc:=2^{\frac{m}{1-m}}\,\omega_d\(\frac{16\,(d+1)\,(3+m)}{1-m}\)^{\frac1{1-m}} (\varrho_0+1)\,.
\ee
\end{lemma}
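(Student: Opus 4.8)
The plan is to run the classical Herrero--Pierre cut-off argument, testing the equation against a smooth radial weight localized near $B_{2R}(x_0)$, and then track every constant that appears. First I would fix $x_0$ (WLOG $x_0=0$) and choose a cut-off function $\phi=\phi_{R,r}\in C_c^\infty(\R^d)$ with $0\le\phi\le1$, $\phi\equiv1$ on $B_{2R}$, $\supp\phi\subset B_{2R+r}$, and $\|\nabla\phi\|_\infty\lesssim 1/r$, $\|\Delta\phi\|_\infty\lesssim 1/r^2$; the factor $\varrho_0$ will enter here because the hypothesis $\varrho_0 r\ge 2R$ is what lets one build such a $\phi$ (roughly, on the annulus $B_{2R+r}\setminus B_{2R}$ the geometry is controlled by $r$ up to the factor $\varrho_0+1$). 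A natural concrete choice is to take $\phi = \zeta^{k}$ for a plateau function $\zeta$ and a large power $k=k(m)$ so that $|\Delta(\zeta^k)|\le C\,\zeta^{k-2}/r^2$, which is the standard trick to absorb derivatives of the cut-off into a power of $u^m$ via Young's inequality. I would then differentiate $t\mapsto\int u(t,x)\,\phi(x)\,dx$ and use the equation $u_t=\Delta u^m$ to get
\[
\frac{d}{dt}\int u\,\phi\,dx=\int u^m\,\Delta\phi\,dx\,.
\]

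Next I would estimate the right-hand side. Writing $\int u^m\,\Delta\phi\,dx=\int_{\{2R<|x|<2R+r\}} u^m\,\Delta\phi\,dx$ and using $|\Delta\phi|\le C(d,m)\,\phi^{1-1/m'}/r^2$ (where the power is arranged so that $u^m\cdot\phi^{1-\ldots}$ is controlled by $(u\phi)^m$ up to the measure of the annulus), Hölder's inequality with exponents $1/m$ and $1/(1-m)$ gives a bound of the form
\[
\Big|\int u^m\,\Delta\phi\,dx\Big|\le \frac{C(d,m)}{r^2}\Big(\int u\,\phi\,dx\Big)^{m}\big|B_{2R+r}\setminus B_{2R}\big|^{1-m}\,.
\]
The annulus volume is $\le \omega_d\,(2R+r)^d/d$, but using $2R\le\varrho_0 r$ this is $\le \omega_d\,(\varrho_0+1)^d r^d/d$ (or more crudely $\le\omega_d(\varrho_0+1)r^d$ after the bookkeeping used in the paper). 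Setting $y(t):=\int u(t,x)\phi(x)\,dx$, this is a differential inequality $|y'(t)|\le K\,y(t)^m\,r^{d(1-m)-2}$ with $K=K(d,m)\,\omega_d(\varrho_0+1)$. Integrating the ODE comparison $|y(t)^{1-m}-y(\tau)^{1-m}|\le (1-m)K\,|t-\tau|\,r^{d(1-m)-2}$ and then using the elementary inequality $(a+b)^{1/(1-m)}\le 2^{m/(1-m)}(a^{1/(1-m)}+b^{1/(1-m)})$ to split the resulting power, I recover
\[
y(t)\le 2^{\frac{m}{1-m}}\,y(\tau)+2^{\frac m{1-m}}\big((1-m)K\big)^{\frac1{1-m}}\,\frac{|t-\tau|^{\frac1{1-m}}}{r^{\frac{2-d(1-m)}{1-m}}}\,,
\]
and since $\phi\equiv1$ on $B_{2R}$ and $\phi\le\mathbbm 1_{B_{2R+r}}$, this yields exactly~\eqref{Herrero.Pierre.123} once one checks that $\big((1-m)K\big)^{1/(1-m)}$ matches $\cc$ in~\eqref{C3.constant}. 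Extracting the explicit constant $K$ means being careful with the power $k(m)$ on the cut-off: the "$16(d+1)(3+m)/(1-m)$" inside $\cc$ is precisely $(1-m)$ times the constant coming from $\|\Delta\phi\|$ combined with the Hölder/Young juggling, so I would choose $k$ (something like $k=2/(1-m)$ rounded up, giving the $(3+m)$ and $(d+1)$ factors) and compute.

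The main obstacle is not the structure of the argument — which is textbook — but the rigorous justification and the constant tracking. Rigorously, $u$ is only a weak (possibly not smooth) nonnegative $\mathrm L^1$ solution, so the differentiation under the integral and the use of $u_t=\Delta u^m$ must be done via the weak formulation with a space-time test function, or by an approximation/regularization ($u\rightsquigarrow u+\delta$, mollification) and passage to the limit; one also needs $u^m\in\mathrm L^1_{\mathrm{loc}}$ locally in time, which follows from $u\in\mathrm L^1$ and the local smoothing (or simply from Hölder on bounded sets against the already-finite mass). Secondly, making the final constant come out as stated~\eqref{C3.constant} rather than as some larger harmless multiple requires choosing the cut-off power and the Hölder split optimally and then simplifying; since the paper explicitly privileges "simplicity over sharpness," I would aim for the cleanest chain of inequalities that still lands on the displayed $\cc$, absorbing the $(\varrho_0+1)$ from the annulus volume and the $16(d+1)(3+m)/(1-m)$ from the $\Delta\phi$ estimate, and leaving the $2^{m/(1-m)}$ prefactors exactly where the super-additivity inequality puts them.
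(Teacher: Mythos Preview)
Your plan is correct and follows essentially the same route as the paper: choose $\phi=\varphi^\beta$ for a radial plateau $\varphi$ with $\|\nabla\varphi\|_\infty\le 2/r$, $\|\Delta\varphi\|_\infty\le 4d/r^2$, differentiate $\int u\,\phi$, apply H\"older with exponents $1/m$ and $1/(1-m)$ to obtain $|y'|\le C(\phi)\,y^m$, integrate, and split with $(a+b)^{1/(1-m)}\le 2^{m/(1-m)}(a^{1/(1-m)}+b^{1/(1-m)})$; the paper takes the concrete power $\beta=4/(1-m)$ (not $2/(1-m)$) so that the combination $|\Delta\phi|^{1/(1-m)}\phi^{-m/(1-m)}$ is bounded pointwise by $\big(16(d+1)(3+m)/((1-m)^2 r^2)\big)^{1/(1-m)}$, then integrates this over the annulus using $(2R+r)^d-(2R)^d\le d(\varrho_0+1)\,r^d$ to land exactly on~\eqref{C3.constant}.
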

\begin{proof} Let $ \phi=\varphi^\beta $, for some $\beta>0$ (sufficiently large, to be chosen later) be a radial cut-off function supported in $ B_{2\,R+r}(x_0)$ and let $\varphi=1 $ in $ B_{2\,R}(x_0) $. We can take, for instance, $\varphi=\varphi_{2\,R, 2\,R+r}$, where $\varphi_{2\,R, 2\,R+r}$ is defined in~\eqref{test.funct}.
We know that, see for instance~\eqref{test.estimates} of Lemma~\ref{lem.test.funct} in Appendix~\ref{Appendix:Truncation} that,
\be{test.estimates.1}
\|\nabla\varphi\|_\infty\le\frac2{r}\quad\mbox{and}\quad \|\Delta\varphi\|_\infty\le\frac{4\,d}{r^2}.
\ee
In what follow we will write $B_{R}$ instead of $B_{R}(x_0)$ when no confusion arises. Let us compute
\be{derivation.L1.norm}\begin{split}
 \left|\frac{d}{dt} \int_{B_{2\,R+r}}{u( t,x ) \phi\left( x\right)\dx}\right|
&=\left| \int_{B_{2 R+r}}\Delta\left( u^m\right) \phi \dx\right|
=\left| \int_{B_{2 R+r}} u^m \Delta\phi \dx\right| \\
& \le \int_{B_{2 R+r}} u^m \big|\Delta\phi\big| \dx\\
&\le \( \int_{B_{2 R+r}} u\,\phi \dx\)^{m} \( \int_{B_{2\,R+r}}\frac{\left|\Delta \phi\right|^{\frac1{1-m}}}{\phi ^{\frac{m}{1-m}}} \dx\)^{1-m} \\
 &\qquad :=C\left( \phi\right) \( \int_{B_{2\,R+r}}{u\,\phi \left( x\right)\dx}\)^{m}\,,
\end{split}\ee
where we have used H\"older's inequality with conjugate exponents $\frac1{m}$ and $\frac1{1-m}$.
We have obtained the following closed differential inequality
\begin{equation*}
 \left|\frac{d}{dt} \int_{B_{2 R+r}}{u\left( t,x\right) \phi\left( x\right)\dx}\right| \le C(\phi) \( \int_{B_{2 R+r}}{u\left(t,x\right) \phi\left( x\right)\dx}\)^m.
\end{equation*}
An integration in time shows that, for all $t$, $\tau\ge 0$, we have
\begin{equation*}
\left( \int_{B_{2 R}}{u\left( t,x\right) \phi\left( x\right)\dx}\right)^{1-m} \le \left( \int_{B_{2 R}}{u\left( \tau,x\right) \phi\left( x\right)\dx}\right)^{1-m}+\left( 1-m\right)C\left( \phi\right) \left| t-\tau\right|.
\end{equation*}
Since $ \phi $ is supported in $ _{2\,R+r} $ and equal to $ 1$ in $ B_{2\,R}$, this implies~\eqref{Herrero.Pierre.123}, indeed, using
\[
(a+b)^{\frac1{1-m}}\le 2^{\frac1{1-m}-1} \left(a^{\frac1{1-m}}+b^{\frac1{1-m}}\right),
\]
we get
\begin{equation*}\begin{split}
 \int_{B_{2 R}} u(t,x)\dx &\le 2^{\frac{m}{1-m}}\( \int_{B_{2 R+r}} u(\tau,x)\dx+\big( (1-m)\,C(\phi)\big)^{\frac1{1-m}} \left| t-\tau\right|^{\frac1{1-m}}\)\\
&\le 2^{\frac{m}{1-m}}\,\int_{B_{2 R+r}} u(\tau,x)\dx+\cc\,\frac{|t-\tau|^{\frac1{1-m}}}{r^{\frac{2-d\,(1-m)}{1-m}}}\,,
\end{split}
\end{equation*}
where
\[
\cc(r):=2^{\frac{m}{1-m}}((1-m)C(\phi))^{\frac1{1-m}}r^{\frac{2-d\,(1-m)}{1-m}}.
\]
The above proof is formal when considering weak or very weak solutions, in which case, it is quite lengthy (although standard) to make it rigorous, cf.~\cite[Proof of Lemma 3.1]{Herrero1985}; indeed, it is enough to consider the time-integrated version of estimates~\eqref{derivation.L1.norm}, and conclude by a Grownwall-type argument.

The proof is completed once we show that the quantity $\cc(r)$ is bounded and provide the expression~\eqref{C3.constant}. Recall that $\phi=\varphi^\beta$, so that
\be{ineq.Delta}\begin{split}
\left|\Delta\left(\phi(x)\right)\right|^{\frac1{1-m}}\phi(x)^{-\frac{m}{1-m}}&=
\varphi(x)^{-\frac{\beta\,m}{1-m}}\left|\beta\,(\beta-1)\,\varphi^{\beta-2}\,
\left|\nabla\,\varphi\,\right|^2+\beta\,\varphi^{\beta-1}\,\Delta\varphi\right|^{\frac1{1-m}}\\
&\le\big(\beta\,(\beta-1)\big)^{\frac1{1-m}}\,\varphi^{\frac{\beta-2-\beta\,m}{1-m}}
\left|\,\left|\nabla\,\varphi\,\right|^2+\left|\Delta\varphi\right|\right|^{\frac1{1-m}}\\
&\le\(\tfrac{4\,(3+m)}{(1-m)^2}\)^{\frac1{1-m}}
\left(\tfrac{4\,(d+1)}{r^2}\right)^{\frac1{1-m}}\,.
\end{split}
\ee
The first inequality follow from the fact that we are considering a radial function $0\le \varphi(x)\le 1$, and we take $\beta=\frac4{1-m}>\frac2{1-m}$. The last one follows by~\eqref{test.estimates.1}. Finally:
\[\begin{split}
&\big( (1-m)\,C(\phi)\big)^{\frac1{1-m}}\,r^{\frac{2-d\,(1-m)}{1-m}}\\
&=(1-m)^{\frac1{1-m}} \( \int_{B_{2\,R+r}\setminus B_{2\,R}}\frac{\left|\Delta \phi\right|^{\frac1{1-m}}}{\phi ^{\frac{m}{1-m}}} \dx\)r^{\frac{2-d\,(1-m)}{1-m}}\\
&\le (1-m)^{\frac1{1-m}} \(\tfrac{4\,(3+m)}{(1-m)^2}\)^{\frac1{1-m}}\left(\tfrac{4\,(d+1)}{r^2}\right)^{\frac1{1-m}}\big|B_{2\,R+r}\setminus B_{2\,R}\big|\,r^{\frac{2-d\,(1-m)}{1-m}}\\
&\qquad=\omega_d\(\tfrac{16\,(d+1)\,(3+m)}{1-m}\)^{\frac1{1-m}}\frac{(2\,R+r)^d-(2\,R)^d}{d\,r^d}\\
&\qquad\le\omega_d\(\tfrac{16\,(d+1)\,(3+m)}{1-m}\)^{\frac1{1-m}} (\varrho_0+1)
\end{split}
\]
where we have used that the support of $\Delta\phi$ is contained in the annulus $B_{2\,R+r}\setminus B_{2\,R}$, inequality~\eqref{ineq.Delta} and in the last step we have used that $ \varrho_0\,r\ge 2\,R$ and
\[
(2\,R+r)^d-(2\,R)^d \le d\,(2\,R+r)^{d-1}\,r \le d\,(\varrho_0+1)\,r^d\,.
\]
The proof is now completed.\end{proof}

\subsection{Local upper bounds}\label{Sec:LocalUpperBounds}

\begin{lemma}\label{Lem:LocalSmoothingEffect} Assume that $d\ge1$, $m\in(m_1,1)$. If $u$ is a solution of~\eqref{FD} with non-negative initial datum $u_0\in\mathrm L^1(\R^d)$, then there exists a positive constant~$\overline\kappa$ such that any solution $u$ of~\eqref{FD} satisfies for all $(t,R)\in(0,+\infty)^2$ the estimate
\be{BV-1}
\sup_{y\in B_{R/2}(x)}u(t,y)\le\overline{\kappa}\left(\frac1{t^{d/\alpha}}\(\int_{B_R(x)}u_0(y)\,dy\)^{2/\alpha}+\(\frac t{R^2}\)^\frac1{1-m}\right)\,.
\ee
\end{lemma}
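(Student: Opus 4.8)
# Proof Proposal for Lemma~\ref{Lem:LocalSmoothingEffect}

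\textbf{Overall strategy.} The plan is to run a Moser-type iteration on the fast diffusion equation~\eqref{FD} localized on the ball $B_R(x)$, playing the local $\mathrm L^1$ information against a sequence of reverse-H\"older/Sobolev estimates to bootstrap up to an $\mathrm L^\infty$ bound. The two terms on the right-hand side of~\eqref{BV-1} reflect the two natural scalings: the first is the genuine smoothing term (homogeneous of the same degree as $\|u_0\|_{\mathrm L^1(B_R)}$ under the scaling of~\eqref{FD}), and the second is the "harmless" contribution that the cut-off functions contribute at the boundary of $B_R$ and that also controls what happens when $t/R^2$ is large. The exponent $\alpha=2-d(1-m)$ is exactly the one that makes the scaling work: under $u_\lambda(t,y)=\lambda^{d}u(\lambda^{2}t,\lambda y)$ one has $\|u_{\lambda,0}\|_{\mathrm L^1}$ invariant while $\sup u_\lambda$ scales like $\lambda^d$, and matching powers forces the $t^{-d/\alpha}$ and $(\|u_0\|_{\mathrm L^1})^{2/\alpha}$ weights. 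So by this scaling invariance it suffices to prove the estimate for, say, $R=1$ and then a fixed convenient time, or to normalize the mass; I would first reduce to a normalized situation and then track constants.

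\textbf{Key steps.} First, by the admissible scaling just described, reduce to $R=1$ (and recenter $x=0$). Second, establish the basic energy inequality: multiply~\eqref{FD} by $p\,u^{p-1}\varphi^{2}$ for a space-time cut-off $\varphi$ supported in shrinking parabolic cylinders, integrate by parts, and obtain a Caccioppoli-type estimate controlling $\iint \varphi^2|\nabla u^{(m+p-1)/2}|^2$ and $\sup_t\int \varphi^2 u^{p+?}$ by lower powers of $u$ on a slightly larger cylinder, with the cut-off derivatives producing the $(r-\varrho)^{-2}$ factors; the truncation functions of Appendix~\ref{Appendix:Truncation} give explicit bounds $\|\nabla\varphi\|_\infty\le 2/(r-\varrho)$, $\|\partial_t\varphi\|_\infty\le C/(r-\varrho)^2$. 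Third, feed this into the Sobolev inequality~\eqref{sob.step2}/\eqref{Lem.Moser.Proof.10} to upgrade $\mathrm L^q$ control to $\mathrm L^{\gamma q}$ with $\gamma=1+2/d$, getting an iterative inequality of exactly the form~\eqref{hyp.num}; apply the numerical iteration Lemma~\ref{lem.num.iter} to pass to $\sup$. This gives an $\mathrm L^{p_0}\to\mathrm L^\infty$ smoothing estimate starting from some small power $p_0>0$. Fourth, close the loop from $\mathrm L^{p_0}$ down to $\mathrm L^1$: here one uses an interpolation $\|u\|_{\mathrm L^{p_0}}\le \|u\|_{\mathrm L^1}^{\theta}\|u\|_{\mathrm L^\infty}^{1-\theta}$ on the cylinder together with a Young's inequality to absorb the $\|u\|_\infty$ factor into the left side (this is the standard trick making the iteration start from $\mathrm L^1$), or iterate the smoothing inequality itself on a geometric sequence of radii. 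Fifth, the $\mathrm L^1$ mass on the inner cylinder is controlled in terms of $\int_{B_R(x)}u_0$ by the Herrero--Pierre estimate of Lemma~\ref{HP-Lemma}; this is where the second term $(t/R^2)^{1/(1-m)}$ enters, coming precisely from the error term $\cc\,|t-\tau|^{1/(1-m)}/r^{(2-d(1-m))/(1-m)}$ in~\eqref{Herrero.Pierre.123}. Finally, undo the scaling and collect constants into $\overline\kappa$.

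\textbf{Main obstacle.} The genuinely delicate point is the last stage of the iteration — getting the smoothing to start from $\mathrm L^1$ rather than from a fixed $\mathrm L^{p_0}$ with $p_0>1$. For $m$ in the subcritical fast-diffusion range the natural energy/Sobolev machinery produces $\mathrm L^{p}\to\mathrm L^\infty$ bounds only for $p$ above a threshold, and descending to $p=1$ requires either a second, reversed iteration on a telescoping family of cylinders (a so-called "backward" Moser iteration in the $p$ variable) or the absorption argument mentioned above, both of which must be carried out while keeping all constants explicit and while respecting the constraint $m>m_1=(d-1)/d$ that guarantees $\alpha>0$ and the exponents stay in the admissible range. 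Equally, bookkeeping the cut-off-generated boundary terms so that they assemble cleanly into the single clean term $(t/R^2)^{1/(1-m)}$, rather than a messy sum, takes care; the trick is to choose the geometric sequence of radii so that the boundary contributions form a convergent series dominated by this term. Everything else is a routine, if lengthy, instance of the Moser scheme already set up in Section~\ref{Sec:MoserIteration}.
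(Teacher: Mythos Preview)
Your overall strategy matches the paper's: rescale to a unit cylinder, run a Moser iteration to get an $\mathrm L^2\to\mathrm L^\infty$ bound, then a de Giorgi-type iteration (your ``interpolation + Young + absorb'' trick) to descend to $\mathrm L^1$, and finally invoke Herrero--Pierre (Lemma~\ref{HP-Lemma}) to control the space-time $\mathrm L^1$ norm by the initial mass. Two technical ingredients you do not mention are worth flagging.

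First, the paper runs the entire iteration not on the rescaled solution $\hat u$ but on $\hat v:=\max(\hat u,1)$, which is a subsolution. The point is that $\hat v\ge1$ forces $\hat v^{m+p_0-1}\le\hat v^{p_0}$ (since $m<1$), so the two different powers coming from the time-derivative term and the spatial-gradient term in the Caccioppoli inequality collapse into one, and the iteration closes cleanly with trackable constants. Working directly with $\hat u$ is possible but messier. More importantly, the ``$+1$'' in $\hat v\le\hat u+1$ is the true origin of the additive constant that, after undoing the rescaling $\hat u(t,x)=(R^2/\tau)^{1/(1-m)}u(\tau t,Rx)$, becomes the term $(t/R^2)^{1/(1-m)}$ --- not the cut-off boundary contributions as your last paragraph suggests.

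Second, and more substantively, your outline does not cover the small-time regime. After undoing the rescaling, the iteration delivers~\eqref{BV-1} only when $t\ge\tau_\star:=R^\alpha\|u_0\|_{\mathrm L^1(B_R)}^{1-m}$ (equivalently $\|\hat u_0\|_{\mathrm L^1(B_1)}\le1$). For $0<t<\tau_\star$ the paper invokes the Aronson--B\'enilan estimate $u_t\ge-(d/\alpha)\,u/t$, which yields the time-monotonicity $u(t)\le u(\tau_\star)\,(\tau_\star/t)^{d/\alpha}$ and then plugs in the bound already obtained at $t=\tau_\star$. This ingredient is not recoverable from Moser iteration plus Herrero--Pierre alone; in particular, in dimensions $d\le2$ the iteration exponent $\beta$ of~\eqref{beta} differs from $\alpha$, so a direct rescaling of the $\mathrm L^1\to\mathrm L^\infty$ estimate would not produce the correct $t^{-d/\alpha}$ power for small $t$.
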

The above estimate is well known, \emph{cf.}~\cite{DiBenedetto1993, DiBenedetto2012, Daskalopoulos2007,Bonforte2010b}, but the point is that we provide an explicit expression of the constant
\be{kappa}
\overline\kappa=\mathsf k\,\mathcal K^\frac{2\,q}\beta\,
\ee
where $\mathsf k=\mathsf k(m,d,\beta,q)$ is such that
\[
\mathsf k^\beta=\big(\tfrac{4\,\beta}{\beta+2}\big)^\beta\,\big(\tfrac4{\beta+2}\big)^2\,\pi^{\,8\,(q+1)}
\,e^{8\sum_{j=0}^{\infty}\log(j+1)\,\left(\frac q{q+1}\right)^j}\,2^\frac{2\,m}{1-m}\,(1+\mathtt{a}\,\omega_d)^2\,\mathtt{b}
\]
with $\mathtt{a}=\tfrac{3\,(16\,(d+1)\,(3+m))^\frac1{1-m}}{(2-m)\,(1-m)^\frac m{1-m}}+\tfrac{2^\frac{d-m\,(d+1)}{1-m}}{3^d\,d}\quad\mbox{and}\quad \mathtt{b}=\tfrac{38^{2\,(q+1)}}{\big(1-(2/3)^{\frac{\beta}{4\,(q+1)}}\big)^{4\,(q+1)}}$.\\
The constant $\mathcal K$ is the same constant as in~\eqref{sob.step2} and corresponds to the inequality
\be{GNS111}
\|f\|^2_{\mathrm L^{\pc}(B)}\le\mathcal K\(\|\nabla f\|^2_{\mathrm L^2(B)}+\|f\|^2_{\mathrm L^2(B)}\).
\ee
In other words,~\eqref{GNS111} is~\eqref{sob.step2} written for $R=1$. The other parameters are given in Table~\ref{table.k.bar} (see~\cite{BDNS2020} for details on optimality and proofs).
\setlength{\extrarowheight}{6pt}
\begin{table}[ht]
\begin{center}
\begin{tabular}{|c||c|c|c|c|}\hline
 ~ & $\pc$ & $\mathcal K$ & $q$ & $\beta$\\[6pt]
\hline\hline
$d\ge 3$ & $\frac{2\,d}{d-2}$ & $\frac2{\pi}\,\Gamma(\frac d2+1)^{2/d}$ & $\frac d2$& $\alpha$ \\[6pt]
\hline
$d=2$ & $4$ & $\frac2{\sqrt\pi}$ & $2$ & $2\,(\alpha-1)$ \\[6pt]
\hline
$d=1$ & $\frac4m$ & $2^{1+\frac m2}\,\max\(\frac{2\,(2-m)}{m\,\pi^2},\frac14\)$ & $\frac2{2-m}$ & $\frac{2\,m}{2-m}$\\[6pt]
\hline
\end{tabular}
\caption{\label{table.k.bar} Table of the parameters and the constant $\mathcal K$ in dimensions $d=1$, $d=2$ and $d\ge3$. The latter case corresponds to the critical Sobolev exponent while the inequality for $d\le2$ is subcritical. In dimension $d=1$, $\pc=4/m$, which makes the link with~\eqref{estim.S-p}.}
\end{center}
\end{table}

\begin{proof}[Proof of Lemma~\ref{Lem:LocalSmoothingEffect}] Our proof follows the scheme of~\cite{Bonforte2010b} so we shall only sketch its main steps, keeping track of the explicit expression of the constants. The point $x\in\R^d$ is arbitrary and by translation invariance it is not restrictive to assume that $x=0$ and write $B_R=B_R(0)$. We also recall that $u$ always possesses the regularity needed to perform all computations throughout the following steps.

Let us introduce the rescaled function
\be{realation-u-hatu}
\hat u(t,x)=\left(\frac{R^2}\tau\right)^{\frac1{1-m}}\,u(\tau\,t,R\,x)
\ee
which solves~\eqref{FD} on the cylinder $\left(0, 1\right]\times B_1$. In Steps 1-3 we establish on $\hat v=\max(\hat u,1)$ a $\mathrm L^2-\mathrm L^\infty$ smoothing inequality which we improve to a $\mathrm L^1-\mathrm L^\infty$ smoothing in Step 4, using a {\em de Giorgi}-type iteration. In Step 5, we scale back the estimate to get the result on $u$.

\begin{steps}
\stepitem We observe that $\hat v=\max\{\hat u,1\}$ solves $\frac{\partial \hat v}{\partial t}\le\Delta \hat v^m$. According to~\cite[Lemma~2.5]{Bonforte2010b}, we know that
\[
\sup_{s\in[T_1,T]}\int_{B_{R_1}}\hat v^{p_0}(s,x)\,dx+\iint_{Q_1}\left|\nabla \hat v^\frac{p_0+m-1}2\right|^2\,dx\,dt\le\frac8{c_{m,p_0}}\iint_{Q_0}\(\hat v^{m+p_0-1}+\hat v^{p_0}\)dx\,dt
\]
where $Q_k=(T_k,T]\times B_{R_k}$ with $0<T_0<T_1<T\le 1$, $0<R_1<R_0\le 1$ and $c_{m,p_0}=\min\left\{1-\tfrac1{p_0},\tfrac{2\,(p_0-1)}{p_0+m-1}\right\}\ge\tfrac12$. We have $\hat v^{m+p_0-1}\le \hat v^{p_0}$ because $\hat v\ge 1$, so that
\be{Lemma2.5:BV2010}
\sup_{s\in[T_1,T]}\int_{B_{R_1}}\hat v^{p_0}(s,x)\,dx+\iint_{Q_1}\left|\nabla \hat v^\frac{p_0+m-1}2\right|^2\,dx\,dt\le\mathcal C_0\iint_{Q_0}\hat v^{p_0}\,dx\,dt
\ee
where
\[
\mathcal C_0=32\(\frac1{(R_0-R_1)^2}+\frac1{T_1-T_0}\).
\]

\stepitem Let $\pc$ be as defined in Section~\ref{Sec:def} and $\mathcal K$ be the constant in the inequality~\eqref{sob.step2}. Let $q=\pc/(\pc-2)$ and $Q_i=(T_i,T]\times B_{R_i}$ as in Step 1. We claim that
\be{induction.step}
\iint_{Q_1}\hat v^{p_1}\,dx\,dt\le\mathcal K_0\(\iint_{Q_0}\hat v^{p_0}\,dx\,dt\)^{1+\frac1q}\quad\mbox{with}\quad\mathcal K_0=\mathcal K\(R_1^{-2}+\mathcal C_0\)^{1+\frac1q}\,.
\ee
Let us proven~\eqref{induction.step}. Using H\"older's inequality, for any $a\in(2,\pc)$ we may notice that
\[
\int_{B_{R_1}}|f(s,x)|^a\,dx=\int_{B_{R_1}}|f(s,x)|^2\,|f(s,x)|^{a-2}\,dx\le\|f\|_{\mathrm L^{\pc}(B_{R_1})}^2\,\|f\|_{\mathrm L^b(B_{R_1})}^{a-2}
\]
with $b=q\,(a-2)$. Using~\eqref{sob.step2}, this leads to
\[
\iint_{Q_1}|f(t,x)|^a\,dx\,dt\le\mathcal K\(\|\nabla f\|_{\mathrm L^2(Q_1)}^2+\tfrac1{R_1^2}\,\|f\|_{\mathrm L^2(Q_1)}^2\)\sup_{s\in(T_1,T)}\(\int_{B_{R_1}}|f(s,x)|^b\,dx\)^\frac1q.
\]
Choosing $f^2=\hat v^{p_0+m-1}$ with $a=2\,p_1/(p_0+m-1)$ and $b=2\,p_0/(p_0+m-1)$ we get
\[
\iint_{Q_1}\hat v^{p_1}\,dx\,dt\le\mathcal K\iint_{Q_1}\(\left|\nabla\hat v^\frac{p_0+m-1}2\right|^2+\frac{\hat v^{p_0}}{R_1^2}\)dx\,dt\sup_{s\in(T_1,T)}\(\int_{B_{R_1}}\hat v^{p_0}\,dx\)^\frac1q
\]
where
\[
p_1=\(1+\frac1q\)p_0-1+m>p_0\,.
\]
Letting $X=\nrm{\nabla\hat v^{(p_0+m-1)/2}}2^2$, $Y_i=\iint_{Q_1}\hat v^{p_i}\,dx\,dt$ and $Z=\sup_{s\in(T_1,T)}\int_{B_{R_1}}\hat v^{p_0}\,dx$, we get $Y_1\le\mathcal K\,(X+R_1^{-2}\,Y_0)\,Z^{1/q}$, while~\eqref{Lemma2.5:BV2010} reads $X+Z\le\mathcal C_0\,Y_0$. Hence $Y_1\le\mathcal K\,\big((R_1^{-2}+\mathcal C_0)\,Y_0-Z\big)\,Z^{1/q}\le\,\mathcal K\,\big((R_1^{-2}+\mathcal C_0)\,Y_0\big)^{(q+1)/q}$, that is inequality~\eqref{induction.step}.

\stepitem We perform a Moser-type iteration. In order to iterate~\eqref{induction.step}, fix $R_\infty<R_0<1$, $T_0<T_\infty<1$ and also assume that $2\,R_\infty\ge R_0$. We shall consider the sequences $(p_k)_{k\in\N}$, $(R_k)_{k\in\N}$, $(T_k)_{k\in\N}$ and $(\mathcal K_k)_{k\in\N}$ defined as follows:
\begin{align*}
&p_k=\(1+\frac1q\)^k\(2-q\,(1-m)\)+q\,(1-m)\,,\\
&R_k-R_{k+1}=\frac6{\pi^2}\,\frac{R_0-R_\infty}{(k+1)^2}\,,\quad T_{k+1}-T_k=\frac{90}{\pi^4}\,\frac {T_\infty-T_0}{(k+1)^4}\,,\\
&\mathcal K_k=\mathcal K\(R_{k+1}^{-2}+\mathcal C_k\)^{1+\frac1q}\,,\quad\mathcal C_k=32\(\frac1{(R_k-R_{k+1})^2}+\frac1{T_{k+1}-T_k}\),
\end{align*}
using the Riemann sums $\sum_{k\in\N}(k+1)^{-2}=\frac{\pi^2}6$ and $\sum_{k\in\N}(k+1)^{-4}=\frac{\pi^4}{90}$. It is clear that $\lim\limits_{k\to+\infty}R_k=R_\infty$, $\lim\limits_{k\to+\infty}T_k=T_\infty$ and $\mathcal C_k$ diverge as $k\to+\infty$. In addition, the assumption $2\,R_\infty\ge R_0$ leads to $R_{k+1}^{-2}\le (R_0-R_\infty)^{-2}$ hence $\mathcal K_k$ is explicitly bounded by
\[
\mathcal K_k \le \mathcal K \left(\pi^4\,(k+1)^4 L_{\infty}\right)^{1+\frac1q},\quad\mbox{where}\quad L_{\infty}:=\frac1{(R_0-R_\infty)^2}+\frac1{\(T_\infty-T_0\)}\,.
\]
Set $Q_\infty=(T_\infty, T)\times B_{R_\infty}$ and notice that $Q_\infty\subset Q_k$ for any $k\ge 0$. By iterating~\eqref{induction.step}, we find that
\[
\nrm{\hat v}{\mathrm L^{p_{k+1}}(Q_{\infty})}\le \nrm{\hat v}{\mathrm L^{p_{k+1}}(Q_{k+1})}\le\mathcal K_k^\frac1{p_{k+1}}\,\nrm{\hat v}{\mathrm L^{p_k}(Q_k)}^\frac{(q+1)\,p_k}{q\,p_{k+1}} \le \prod_{j=0}^k \mathcal K_j^{\frac1{p_{k+1}}\left(\frac{q+1}{q}\right)^{k-j}} \nrm{\hat v}{\mathrm L^2(Q_0)}^\frac{2\,(q+1)^{k+1}}{q^{k+1}\,p_{k+1}}\,
\]
and
\[
\prod_{j=0}^k \mathcal K_j^{\frac1{p_{k+1}}\left(\frac{q+1}{q}\right)^{k-j}} \le \left[\mathcal K \left(\pi^4\,L_{\infty}\right)^{1+\frac1q}\right]^{\frac1{p_{k+1}}\sum_{j=0}^k\(\frac{q+1}{q}\)^j}\,\prod_{j=1}^{k+1}\,j^\frac{4\(\frac{q+1}{q}\)^{k+2-j}}{p_{k+1}}.
\]
By lower semicontinuity of the $\mathrm L^\infty$ norm, letting $k\to+\infty$, we obtain
\be{step1to3}
\|\hat v\|_{\mathrm L^{\infty}((T_\infty, T]\times B_{R_\infty})} \le \mathcal C\,\|\hat v\|_{\mathrm L^2((T_0, T]\times B_{R_0})}^{\frac2{2-q\,(1-m)}}
\ee
where $0<T_0<T_\infty<T\le 1$, $1/2<R_\infty<R_0 \le 1$, $R_0 \le 2\,R_\infty$, and
\[
\mathcal C=\mathcal K^\frac{q}{2-q\,(1-m)}\left(\pi^4\,L_{\infty}\right)^\frac{(q+1)}{2-q\,(1-m)}\,e^{\frac{4\,(q+1)}{q\(2-q\,(1-m)\)}\sum_{j=1}^{\infty}\left(\frac{q}{q+1}\right)^j\log j}\,.
\]

\stepitem We show how to improve the $\mathrm L^2-\mathrm L^\infty$ smoothing estimate~\eqref{step1to3} to a $\mathrm L^1-\mathrm L^\infty$ estimate, using a de Giorgi-type iteration. Let us set
\be{beta}
\beta=2-2\,q\,(1-m)\,=\begin{cases}\begin{array}{ll}
\alpha\quad&\mbox{if}\quad d\ge 3\,,\\
2\,(\alpha-1)\quad&\mbox{if}\quad d=2\,,\\
\frac{2\,m}{2-m}\quad&\mbox{if}\quad d=1\,,
\end{array}\end{cases}
\ee
we recall that $\beta>0$ for any $m\in(m_1, 1)$ and $d\ge1$. Then, from~\eqref{step1to3}, we obtain, using H\"older's and Young's inequalities,
\begin{multline}\label{iteration.step4}
\|\hat v\|_{\mathrm L^{\infty}((1/9, 1]\times B_{1/2})} \le \mathcal C\,\|\hat v\|_{\mathrm L^{\infty}((\tau_1, 1]\times B_{r_1})}^{\frac1{2-q\,(1-m)}}\,\|\hat v\|_{\mathrm L^1((\tau_1, 1]\times B_{r_1})}^{\frac1{2-q\,(1-m)}} \\
\le\frac12\,\|\hat v\|_{\mathrm L^\infty((\tau_1, 1]\times B_{r_1})}+\mathfrak C_1\,\|\hat v\|_{\mathrm L^1((\tau_1, 1]\times B_{r_1}))}^{\frac2\beta}
\end{multline}
where $1/9<\tau_1<1$, $1/2<r_1<1$ and
\[
\mathfrak C_1=X\left(\frac1{\(r_1-\frac12\)^2}+\frac1{\frac19-\tau_1}\right)^{\frac{2\,(q+1)}{\beta}}
\]
with
\[
X=\tfrac\beta{\beta+2}\,\big(\tfrac4{\beta+2}\big)^\frac2{\beta}\,\mathcal K^\frac{2\,q}{\beta}\(\pi^q\,e^{\sum_{j=1}^{\infty}\left(\frac q{q+1}\right)^j\log j}\)^\frac{8\,(q+1)}{q\,\beta}
\,.
\]
To iterate~\eqref{iteration.step4} we shall consider sequences $(r_i)_{i \in\N}, (\tau_i)_{i\in\N}$ such that
\[
r_{i+1}-r_i=\tfrac16\,(1-\xi)\,\xi^i\,,\quad\tau_i-\tau_{i+1}=\tfrac19\,(1-\xi^2)\,\xi^{2i}\,.
\]
with $\xi=(2/3)^{\frac{\beta}{4\,(q+1)}}$. Since $2/3\le\xi\le 1$, we have
\[
\frac1{1-\xi^2}\le\frac1{5\,(1-\xi)^2}\,,
\]
and this iteration gives us
\[
\|\hat v\|_{\mathrm L^{\infty}((1/9, 1]\times B_{1/2})} \le\frac1{2^k}\,\|\hat v\|_{\mathrm L^\infty((\tau_k, 1]\times B_{r_k})}+\|\hat v\|_{\mathrm L^1((\tau_k, 1]\times B_{r_k})}^{\frac2\beta}\,\sum_{i=0}^{k-1}\frac{\mathfrak C_{i+1}}{2^i}
\]
where for all $i\ge 0$
\[
\frac{\mathfrak C_{i+1}}{2^i}\le\(\tfrac{38}{(1-\xi)^2}\)^\frac{2\,(q+1)}\beta X\left(\tfrac34\right)^i\,.
\]
In the limit $k\rightarrow\infty$ we find
\be{step4}
\|\hat v\|_{\mathrm L^{\infty}((1/9, 1]\times B_{1/2})} \le \mathfrak C\,\|\hat v\|_{\mathrm L^1((0, 1]\times B_{2/3})}^{\frac2\beta}
\ee
where
\be{goth.C}
\mathfrak C=4\(\tfrac{38}{(1-\xi)^2}\)^\frac{2\,(q+1)}\beta X\,.
\ee
\stepitem In this step we complete the proof of~\eqref{BV-1}. We recall that $\hat v=\max\{\hat u, 1\}$ and then, using inequality~\eqref{step4} and the fact that $\hat u\le\hat v\le \hat ud\,(1-m)+1$, we find
\be{inequality.notrescaled}
\sup_{y\in B_{1/2}}\hat u(1,y)\le\|\hat u\|_{\mathrm L^{\infty}\((1/9, 1]\times B_{1/2})\)} \le \mathfrak C\,\|\hat u+1\|_{\mathrm L^1\((0, 1]\times B_{2/3}\)}^{\frac2\beta}\,.
\ee
The function $\hat u$ satisfies the following inequality for any $s \in\left[0,1\right]$
\begin{equation}\label{herrero.pierre.2}
\int_{B_{2/3}}\hat u(s,x)\,dx \le 2^{\frac{m}{1-m}} \int_{B_1}\hat u_0\,dx+\mathscr C\,s^{\frac1{1-m}}\,,
\end{equation}
where
\be{calligrafic.C}
\mathscr C=2^{\frac{m}{1-m}}\left(3\omega_d\left[\frac{16\,(d+1)\,(3+m)}{1-m}\right]^{\frac1{1-m}}\right)\,.
\ee
We recall that $\omega_d=|\mathbb S^{d-1}|=\frac{2\,\pi^{d/2}}{\Gamma(d/2)}$. Inequality~\eqref{herrero.pierre.2} is obtained by aplying Lemma~\ref{HP-Lemma} with $R=1/3$, $r=1/3$ and $\rho=2$. Integrating inequality~\eqref{herrero.pierre.2} over $\left[0,1\right]$ we find
\be{HP.hat}
\|\hat u\|_{\mathrm L^1((0, 1]\times B_{2/3})} \le 2^\frac{m}{1-m}\int_{B_1}\hat u_0\,dx+\tfrac{1-m}{2-m}\,\mathscr C\,.
\ee
We deduce from inequalities~\eqref{inequality.notrescaled}-\eqref{HP.hat} that
\be{inequality.notrescaled-12}
\sup_{y\in B_{1/2}(x)}\hat u(1,y) \le \mathfrak C\,\left[2^\frac{m}{1-m} \left( \int_{B_1}\hat u_0\,dx\right)
+\tfrac{1-m}{2-m}\,\mathscr C+\left(\tfrac23\right)^d\,\frac{\omega_d}d\right]^{\frac2\beta}\,.
\ee
where $\beta$ is as in~\eqref{beta}. Let us define
\[\label{kd12}
\overline{\kappa}:=\mathfrak C\,\left[2^\frac{m}{1-m}+\tfrac{1-m}{2-m}\,\mathscr C+\left(\tfrac23\right)^d\,\frac{\omega_d}d\right]^{\frac2\beta}\,,
\]
with $\mathfrak C$ given in~\eqref{goth.C} and $\mathscr C$ in~\eqref{calligrafic.C}. We first prove inequality~\eqref{BV-1} assuming
\[
\tau \ge \tau_\star:=R^\alpha \|u_0\|_{\mathrm L^1(B_R)}^{1-m}\,,
\]
which, by~\eqref{realation-u-hatu}, is equivalent to the assumption $\|\hat u_0\|_{\mathrm L^1(B_1)}\le 1$.
Indeed, together with~\eqref{inequality.notrescaled-12}, we get
\be{inequality.rescaled-12}
\sup_{y\in B_{R/2}}u(\tau,y)\le \overline{\kappa}\(\frac\tau{R^2}\)^\frac1{1-m}\le\overline{\kappa}\left(\frac1{\tau^{\frac d{\alpha}}}\| u_0\|_{\mathrm L^1(B_R)}^\frac 2\alpha+\(\frac\tau{R^2}\)^\frac1{1-m}\right)\,,
\ee
which is exactly~\eqref{BV-1}. Now, for any $0<t\le\tau_\star$, we use the time monotonicity estimate
\[u(\tau)\le u(\tau_\star) \left(\frac{\tau_\star}\tau\right)^{\frac d{\alpha}}
\]
obtained by integrating in time the estimate $u_t\ge -\(d/\alpha\)(u/t)$ of Aronson and Benilan (see~\cite{MR524760}). Combined with the estimate~\eqref{inequality.rescaled-12} at time $\tau_\star$, this leads to
\[\begin{split}
\sup_{y\in B_{R/2}}u(\tau,y) & \le \sup_{y\in B_{R/2}}u(\tau_\star,y) \left(\frac{\tau_\star}\tau\right)^\frac d\alpha\le \overline{\kappa}\(\frac{\tau_\star}{R^2}\)^\frac1{1-m}\left(\frac{\tau_\star}\tau\right)^\frac d\alpha\\
&=\overline{\kappa}\frac{\|u_0\|_{\mathrm L^1(B_R)}^\frac2\alpha}{\tau^\frac d{\alpha}}\le\overline{\kappa}\left(\frac1{\tau^{\frac d{\alpha}}}\| u_0\|_{\mathrm L^1(B_R)}^\frac 2\alpha+\(\frac\tau{R^2}\)^\frac1{1-m}\right)\,
\end{split}
\]
and concludes the proof.
\end{steps}
\end{proof}

\subsection{A comparison result based on Aleksandrov's Reflection Principle}\label{sec:Aleksandrov-Reflection-Principle}

In this section we are going to prove the Aleksandrov's Reflection Principle, which will be a key tool to prove the lower bounds of Lemma~\ref{Posit.Thm.FDE}. This proof borrows some ideas from the proof of the Aleksandrov's Reflection Principle found in~\cite{Galaktionov2004}.
\begin{proposition}\label{Local.Aleks}
\noindent Let $B_{\lambda R}(x_0)\subset\RR^d$ be an open ball with center in $x_0\in\RR^d$ of radius $\lambda\,R$ with $R>0$ and $\lambda>2$. Let $u$ be a solution to problem
\be{FDE.Problem.Aleks}
\begin{split}
\left\{\begin{array}{lll}
u_t=\Delta (u^m) & ~\quad {\rm in}~ (0,+\infty)\times \RR^d\,,\\
u(0,x)=u_0(x) & ~\quad x\in\R^d\,.\\
\end{array}\right.
\end{split}
\ee
with $\supp(u_0)\subset B_{R}(x_0)$. Then, one has:
\be{inequality-0}
u(t,x_0)\ge u(t,x)
\ee
for any $t>0$ and for any $x\in D_{\lambda, R}(x_0)=B_{\lambda
R}(x_0)\setminus B_{2\,R}(x_0)$. Hence,
\be{Aleks.Mean}
u(t,x_0)\ge \left|D_{\lambda, R}(x_0)\right|^{-1}\int_{D_{\lambda,
R}(x_0)}u(t,x)\dx\,.
\ee
\end{proposition}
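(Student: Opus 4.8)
The plan is to exploit the classical reflection argument: for each hyperplane $H$ separating $x_0$ from the reflected copy of $\supp(u_0)$, compare $u$ with the solution obtained by reflecting across $H$. Concretely, fix a unit vector $e$ and let $H_e$ be the hyperplane through $x_0$ with normal $e$, splitting $\R^d$ into the half-space $\Sigma_e^-$ containing (after translation to $x_0$) the support ball $B_R(x_0)$ and the opposite half-space $\Sigma_e^+$. Let $\Pi_e\colon\R^d\to\R^d$ denote the reflection across $H_e$. First I would set $v(t,x):=u(t,\Pi_e x)$, which is again a solution of $v_t=\Delta v^m$ on $(0,+\infty)\times\R^d$ by the symmetry of the Laplacian under isometries, with initial datum $v_0=u_0\circ\Pi_e$ supported in $\Pi_e(B_R(x_0))\subset\Sigma_e^+$. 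The key point is that on the half-space $\Sigma_e^+$ we have $u_0\le v_0$ pointwise: indeed $u_0$ is supported in $\Sigma_e^-$ so $u_0\equiv 0$ on $\Sigma_e^+\setminus H_e$, while $v_0\ge 0$ everywhere.

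Next I would invoke the comparison principle for the fast diffusion equation on the half-space $\Sigma_e^+$, with the caveat that one must control the behaviour on the boundary $H_e$: on $H_e$ one has $u(t,x)=v(t,x)$ automatically (since $\Pi_e$ fixes $H_e$), so $u\le v$ holds on the parabolic boundary of $(0,+\infty)\times\Sigma_e^+$, and hence $u(t,x)\le v(t,x)=u(t,\Pi_e x)$ for all $t>0$ and all $x\in\Sigma_e^+$. Now given a target point $x\in D_{\lambda,R}(x_0)$, i.e. $2R<|x-x_0|<\lambda R$, I choose $e$ to be the unit vector pointing from $x$ towards $x_0$, namely $e=(x_0-x)/|x_0-x|$. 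With this choice $x\in\Sigma_e^+$ (it lies strictly on the far side of $H_e$ from the support ball, because $|x-x_0|>2R>R$ guarantees that $B_R(x_0)$ lies on the $x_0$-side), and its reflection satisfies $\Pi_e x=x_0$... this is not quite right: reflection across the hyperplane through $x_0$ perpendicular to $e$ sends $x$ to the point at distance $|x-x_0|$ on the other side, not to $x_0$ itself.

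I therefore correct the choice: take $H_e$ to be the hyperplane through $x_0$ with normal $e=(x-x_0)/|x-x_0|$; then $\Pi_e$ reflects across $x_0$ in the $e$-direction. One has $\supp(u_0)\subset B_R(x_0)\subset\Sigma_e^-$ and $x\in\Sigma_e^+$, and moreover $\Pi_e(\supp u_0)\subset B_R(x_0)\subset\Sigma_e^-$ as well, because $B_R(x_0)$ is centered on $H_e$. Running the comparison on $\Sigma_e^+$ as above gives $u(t,x)\le u(t,\Pi_e x)$; but $\Pi_e x$ is the reflection of $x$, which has $|\Pi_e x-x_0|=|x-x_0|$, so this alone does not yield $u(t,x)\le u(t,x_0)$. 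To close the gap one iterates: by the same argument applied to \emph{every} hyperplane through $x_0$ that keeps $\supp(u_0)$ on one side, one sees that along the ray from $x$ through $x_0$ the function $t\mapsto u(t,\cdot)$ is monotone non-increasing in the distance from $x_0$, hence $u(t,x)\le u(t,x_0)$; the reflection comparison across all hyperplanes separating two points $y_1,y_2$ with $|y_1-x_0|\le|y_2-x_0|$ and $\supp(u_0)$ on the $y_1$-side shows $u(t,y_2)\le u(t,y_1)$, and taking $y_1\to x_0$ along the segment gives~\eqref{inequality-0}. Finally, averaging~\eqref{inequality-0} over $D_{\lambda,R}(x_0)$ gives~\eqref{Aleks.Mean} immediately.

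The main obstacle I anticipate is making the comparison principle on the half-space fully rigorous for merely weak (or very weak) solutions with $\mathrm L^1$ data: one needs to justify that $u\le v$ on the lateral boundary $(0,\infty)\times H_e$ passes to an interior inequality, which requires either an approximation argument (working with bounded, strictly positive approximations and passing to the limit, as is standard for the fast diffusion equation) or appealing to a known comparison theorem on domains; one must also check that the zero flux / matching on $H_e$ is compatible with the reflected solution (it is, by the evenness of $v$ in the $e$-direction at $t=0$ being \emph{not} assumed — rather, what is used is simply $u\le v$ on $H_e$ with equality there). Keeping track of these regularity issues, following the treatment in~\cite{Galaktionov2004}, is where the technical care is concentrated; the geometric reflection bookkeeping itself is elementary.
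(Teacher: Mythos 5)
Your final sentence---reflection across the perpendicular bisector of the segment $[y_1,y_2]$ with $\supp(u_0)$ kept on the $y_1$-side---is the right idea and is exactly what the paper does (with $y_1=x_0$, $y_2=x$, and no need for the limit $y_1\to x_0$). But the intermediate ``correction'' you present as a fix is still wrong, and this needs to be flagged clearly: if $H_e$ passes \emph{through} $x_0$, then it is impossible to have $B_R(x_0)\subset\Sigma_e^-$, since $B_R(x_0)$ is centered at $x_0\in H_e$ and straddles $H_e$. Hence the hypotheses of the half-space comparison are not met for any hyperplane through $x_0$, and the subsequent step (``iterate over all hyperplanes through $x_0$ that keep $\supp(u_0)$ on one side'') is vacuous---there are no such hyperplanes. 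Monotonicity of $u(t,\cdot)$ in the distance from $x_0$ cannot be obtained that way.

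The correct geometry, which is what the paper uses, places the reflection hyperplane at distance $a\ge R$ from $x_0$, \emph{not} through $x_0$: in coordinates with $x_0=0$, take $\Pi=\{x_1=a\}$, so that $B_R(0)\subset\Pi_-=\{x_1<a\}$, $u_0\equiv0$ on $\Pi_+$, and comparison on $(0,\infty)\times\Pi_-$ between $u$ and its reflection yields $u(t,0)\ge u(t,(2a,0,\dots,0))$. Varying $a\in(R,\lambda R/2)$ sweeps out the points of $D_{\lambda,R}(0)$ on the positive $x_1$-axis, and rotation gives the full annulus. Equivalently: for a target $x$ with $|x-x_0|>2R$, use the perpendicular bisector of $[x_0,x]$, which lies at distance $|x-x_0|/2>R$ from $x_0$ and therefore keeps $B_R(x_0)$ strictly on the $x_0$-side; this is where the hypothesis $x\notin B_{2R}(x_0)$ is used. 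With that fix, your identification of the remaining technical issue (rigor of the comparison principle on a half-space for weak/very weak solutions) matches the paper's, which handles it by citation.
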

We use the mean value inequality (\ref{Aleks.Mean}) in following form:
\begin{equation} \label{Aleks.Mean.r}
\int_{B_{2\,R+r}(x_0)\setminus B_{2^b R}(x_0)}u(t,x)\dx\le A_d\,r^d\,u(t,x_0)\,,
\ee
with $b=2-(1/d)$, $r>2\,R(2^{1-\frac1{d}}-1)=:r_0$ and a suitable positive constant $A_{d}$. This inequality can easily be obtained from~\eqref{Aleks.Mean}. Let us first assume $d\ge2$, note that in this case $b-1\ge1/2 $ and therefore $r\ge 2\,R\(\sqrt2-1\)$. By Taylor expansion we obtain that for some $\xi\in\(r_0, r\)$ that
\[\begin{split}
\left|B_{2\,R+r}(x_0)\setminus
B_{2^bR}(x_0)\right|&=\frac{\omega_d}{d}\left[(2\,R+r)^d-2^{bd}\,R^d\right]=\omega_d\,(2\,R+\xi)^{d-1}\(r-r_0\)\\
& \le\,\omega_d\,(2\,R+\xi)^{d-1}\,r \le\,\omega_d\,r^d\(\frac{\sqrt2}{\sqrt2-1}\)^{d-1}\,,
\end{split}\]
a simple computation shows that $\sqrt2/\(\sqrt2-1\)\approx3.4142135\le4 $. In the case $d=1$ we have that $b=1$ and thefore
\[
\left|B_{2\,R+r}(x_0)\setminus
B_{2\,R}(x_0)\right|=\omega_1\,r\,.
\]

In conclusion we obtain that for $r\ge2\,R\,(2^{1-\frac1{d}}-1)$ we have
\be{AD}
\left|B_{2\,R+r}(x_0)\setminus
B_{2^bR}(x_0)\right| \le A_d\,r^d\quad\mbox{where}\quad A_d:=\omega_d\,4^{d-1}\,.
\ee

\begin{proof}
Without loss of generality we may assume that $x_0=0$ and write $B_R$ instead of $B_R(0)$. Let us recall that the support of $u_0$ is contained in $B_R$. Let us consider an hyperplane $\Pi$ of equation $\Pi=\{x\in\RR^d~|~ x_1=a\}$ with $a\ge R>0$, in this way $\Pi$ is tangent to the the sphere of radius $a$ centered in the origin. Let us as well define $\Pi_+=\{x\in\RR^d~|~ x_1>a\}$ and $\Pi_-=\{x\in\RR^d~|~x_1<a\}$, and the reflection $\sigma(z)=\sigma(z_1,z_2,\ldots,z_n)=(2a-z_1,z_2,\ldots,z_n)$. By these definitions we have that $\sigma(\Pi_+)=\Pi_-$ and $\sigma(\Pi_-)=\Pi_+$.
Let us denote $Q=(0, \infty)\times \Pi_-$ and the parabolic boundary $\partial_pQ:=\partial Q$. We now consider the \emph{Boundary Value Problem} (BVP) defined as
\be{FDE.Problem.BVP}\tag{BVP}
\begin{split}
\left\{\begin{array}{lll}
u_t=\Delta (u^m) & ~ {\rm in}~ Q,\\
u(t,x)=g(t,x) & ~{\rm in}~ \partial_p Q,\\
\end{array}\right.
\end{split}
\ee
for some (eventually continuous) function $g(t,x)$. Let us define $u_1(t,x)$ to be the restriction of $u(t,x)$ to $Q$ and $u_2(t,x)=u_1(t, \sigma(x))$. We recall that $u_2(t,x)$ is still a solution to problem~\eqref{FDE.Problem.Aleks}. Also, both $u_1(t,x)$ and $u(t,x)$ are solutions to~\eqref{FDE.Problem.BVP} with boundary values $g_1(t,x)$ and $g_2(t,x)$. Furthermore, for any $t>0$ and for any $x\in\Pi$ we have that $g_1(t,x)=g_2(t,x)$, as well $g_1(t,x)=u_0\ge g_2(t,x)=0$ for any $x\in\Pi_-$. By comparison principle we obtain for any $(t,x)\in Q$
\be{1-inequality}
u_1(t,x)\ge u_2(t,x)\,.
\ee
The comparison principle for generic boundary value problems is classical in the literature, however we were not able to find the exact reference for a version on a hyperplane. We refer to the books~\cite{Daskalopoulos2007,Vazquez2007,Vazquez2006,Galaktionov2004}, see also~\cite[Lemma 3.4]{Herrero1985} for a very similar comparison principle, and also~\cite[Remark 1.5]{MR712265} for a general remark about such principles.

Inequality~\eqref{1-inequality} implies for any $t>0$ that
\[
u(t,0)\ge u(t,(2a, \dots, 0)).
\]
By moving $a$ in the range $(R,\lambda R/2)$ we find that $u(t,0)\ge u(t,x)$ for any $x\in D_{\lambda,R}$ such that $x=(x_1, 0, \dots, 0)$. It is clear that by rotating the hyperplane $\Pi$ we can generalize the above argument and obtain inequality~\eqref{inequality-0}. Lastly, we observe that inequality~\eqref{Aleks.Mean} can be easily deduced by averaging inequality~\eqref{inequality-0}. The proof is complete. \end{proof}

\subsection{Local lower bounds}\label{Sec:Locallowerbounds}

We recall Lemma~\cite[Lemma~6]{BDNS2020} which follows from~\cite[Theorem~1.1]{Bonforte2006}.
\begin{lemma}[test]\label{Posit.Thm.FDE}
Let $u(t,x)$ be a solution to~\eqref{FD} and let $R>0$ such that $M_R(x_0):=\|u_0\|_{\mathrm L^1(B_R(x_0))}>0$. Then the inequality
\be{BV-3}
\inf_{|x-x_0|\le R}u(t,x)\ge\kappa\left(R^{-2}\,t\right)^\frac1{1-m}\quad\forall\,t\in[0,2\,\underline t]
\ee
holds with
\[\label{BV-3t1a}
\underline t=\tfrac12\,\kappa_\star\,M_R^{1-m}(x_0)\,R^{\alpha}\,.
\]
\end{lemma}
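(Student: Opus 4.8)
The strategy is to combine the local upper bound of Lemma~\ref{Lem:LocalSmoothingEffect} with the Aleksandrov mean-value inequality~\eqref{Aleks.Mean.r} and a continuity-in-time argument, following~\cite{Bonforte2006}. The core idea is a \emph{barrier/bootstrap} argument: one wants to show that the $\mathrm L^1$ mass initially concentrated in $B_R(x_0)$ cannot escape too fast, so that the solution stays bounded below on $B_R(x_0)$ on a time interval whose length is controlled by $M_R(x_0)^{1-m}R^\alpha$. I would first reduce to $x_0=0$ by translation invariance and, if helpful, rescale so that $R=1$ using the scaling $u\mapsto(R^2/\tau)^{1/(1-m)}u(\tau\,t,R\,x)$ that already appears in~\eqref{realation-u-hatu}; this turns the statement into a universal lower bound on $B_1$.

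\textbf{Step 1: conservation/retention of mass.} Using the Herrero--Pierre estimate of Lemma~\ref{HP-Lemma}, I would show that for $t$ not too large, a fixed fraction of the initial mass $M_R(x_0)$ remains in a slightly larger ball, say $B_{2R}(x_0)$ or $B_{cR}(x_0)$. Concretely, \eqref{Herrero.Pierre.123} (used with the roles of $t$ and $\tau$ exchanged, $\tau=0$) gives a lower bound
\[
\int_{B_{cR}(x_0)}u(t,x)\dx\ge 2^{-\frac{m}{1-m}}M_R(x_0)-\mathsf{const}\cdot\frac{t^{\frac1{1-m}}}{R^{\frac\alpha{1-m}}}\,,
\]
so that as long as $t\le c'\,M_R(x_0)^{1-m}R^{\alpha}$ the right-hand side is at least $\tfrac12 2^{-\frac{m}{1-m}}M_R(x_0)$. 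This is where the time scale $\underline t\sim \kappa_\star M_R^{1-m}(x_0)R^\alpha$ first enters.

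\textbf{Step 2: from retained mass to a pointwise lower bound at $x_0$.} Here I would invoke the Aleksandrov reflection principle. Strictly, Proposition~\ref{Local.Aleks} is stated for data supported in $B_R(x_0)$; to apply it I would cut off the solution, or rather apply the principle to the solution with initial datum $u_0\,\mathbf 1_{B_R(x_0)}$ and use the comparison principle to control the genuine solution from below (monotonicity of the flow with respect to initial data). The mean-value form~\eqref{Aleks.Mean.r} then converts the retained $\mathrm L^1$ mass on an annulus into a pointwise bound $u(t,x_0)\ge A_d^{-1}r^{-d}\int_{\text{annulus}}u(t,x)\dx$, giving $u(t,x_0)\ge c\,M_R(x_0)R^{-d}$ for $t$ in the above range. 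Feeding this back, one wants in fact the self-similar lower bound $u(t,x_0)\ge\kappa (R^{-2}t)^{1/(1-m)}$; this is obtained by comparing with the Barenblatt-type subsolution whose mass at $x_0$ at time $t$ has exactly this size, i.e. choosing the free parameter in the Barenblatt family so that its mass matches $c\,M_R(x_0)$, which fixes a time scale $\sim M_R(x_0)^{1-m}R^\alpha$ and produces the exponent $1/(1-m)$.

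\textbf{Step 3: propagating the bound from $x_0$ to all of $B_R(x_0)$.} To upgrade $u(t,x_0)\ge\ldots$ to $\inf_{|x-x_0|\le R}u(t,x)\ge\ldots$, I would again use Aleksandrov (inequality~\eqref{inequality-0}), applied with centers ranging over $B_R(x_0)$: reflecting across hyperplanes tangent to balls of radius slightly larger than $R$ around each point $x$ shows the value at a nearby "deep" point dominates the value at $x$, and then one uses the already-established lower bound at that deep point. Alternatively, one compares with a Barenblatt subsolution centered at $x_0$ which is bounded below by a positive constant throughout $B_R(x_0)$; either route reduces the infimum over $B_R(x_0)$ to the single-point estimate of Step~2 at the cost of harmless constants, which get absorbed into $\kappa$ and $\kappa_\star$.

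\textbf{Main obstacle.} The delicate point is Step~2: correctly tracking the constants through the comparison with an explicit Barenblatt subsolution so that the \emph{same} quantity $M_R(x_0)^{1-m}R^\alpha$ appears in both $\underline t$ and the self-similar lower bound, with a clean closed form for $\kappa$ and $\kappa_\star$. One must also be careful that the cutoff of the initial datum and the use of the comparison principle on a half-space (as in the proof of Proposition~\ref{Local.Aleks}) are legitimate for the (very) weak solutions under consideration; the excerpt already flags that the relevant comparison principle, while classical, is somewhat awkward to pin down in the literature. The time-monotonicity (Aronson--Bénilan) estimate $u_t\ge-(d/\alpha)(u/t)$ used in Lemma~\ref{Lem:LocalSmoothingEffect} may also be needed here to extend the lower bound from the single "good" time down to all $t\in[0,2\underline t]$, exactly as it was used to pass from $\tau_\star$ to general $t$ in the proof of the local upper bound.
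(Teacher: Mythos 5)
Your high-level outline hits the right ingredients---Aleksandrov reflection, the Herrero--Pierre mass bound, the local smoothing upper bound, and a time-monotonicity estimate---and you correctly identify that the argument should first be established at the center and then propagated to $B_R(x_0)$. However, there are two concrete problems.

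\textbf{Wrong time-monotonicity inequality.} You cite the Aronson--B\'enilan estimate $u_t\ge-\tfrac d\alpha\,\tfrac ut$, which is indeed what Lemma~\ref{Lem:LocalSmoothingEffect} uses, but that inequality is the wrong one here. The paper first proves a lower bound of the form $u(t,0)\ge\tilde{\underline\kappa}\,M_R^{2/\alpha}\,t^{-d/\alpha}$ valid for \emph{large} times $t\ge2\,\tilde{\underline t}$, and then passes to small times using the B\'enilan--Crandall estimate $u_t\le\tfrac{u}{(1-m)\,t}$, which makes $t\mapsto u(t,x)\,t^{-1/(1-m)}$ non-increasing and hence $u(t,x)\ge u(2\,\tilde{\underline t},x)\,\big(t/(2\,\tilde{\underline t})\big)^{1/(1-m)}$ for $t\le2\,\tilde{\underline t}$. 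If you used your quoted inequality instead, you would obtain the exponent $d/\alpha$ rather than $1/(1-m)$, and one can check that the residual powers of $M_R$ and $R$ then fail to cancel, so the final bound would not take the clean form $\kappa\,(R^{-2}\,t)^{1/(1-m)}$ with $\kappa$ independent of $M_R$. The cancellation is exact precisely because $\tfrac2\alpha-(1-m)\tfrac d\alpha-1=0$, which requires the exponent $1/(1-m)$.

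\textbf{The Barenblatt-subsolution detour is not needed and does not obviously close.} In Step~2 you propose to first get a constant bound $u(t,x_0)\ge c\,M_R\,R^{-d}$ and then upgrade it to the self-similar form by comparing with a Barenblatt subsolution. The paper never does a Barenblatt comparison here. Instead, it combines three inequalities directly: the smoothing upper bound to control $\int_{B_{2^bR}}u(t,\cdot)$, Aleksandrov to dominate $\int_{B_{2R+r}\setminus B_{2^bR}}u(t,\cdot)$ by $A_d\,r^d\,u(t,0)$, and Herrero--Pierre to bound $\int_{B_{2R+r}}u(t,\cdot)$ from below. Subtracting and optimizing over the free annulus radius $r$ yields, for $t\ge2\,\tilde{\underline t}$, the \emph{decreasing-in-$t$} bound $u(t,0)\ge\tilde{\underline\kappa}\,M_R^{2/\alpha}\,t^{-d/\alpha}$; the increasing-in-$t$ bound $(R^{-2}t)^{1/(1-m)}$ on $[0,2\,\underline t]$ then comes entirely from B\'enilan--Crandall, not from a Barenblatt barrier. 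You should also make explicit that the local upper bound is used precisely to show the mass is not entirely concentrated in $B_{2^bR}$ (which is needed before Aleksandrov gives anything). Finally, your Step~3 is vaguer than necessary: the paper simply applies the center estimate at the minimizing point $x_m\in\overline{B_R}$ with the ball $B_{2R}(x_m)\supset B_R(0)$, using $M_{2R}(x_m)\ge M_R(0)$ and $\mathrm{supp}(u_0)\subset B_R(0)$; no further reflections or barriers are needed.
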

This estimate is based on the results of Sections~\ref{sec.herrero.pierre} and~\ref{sec:Aleksandrov-Reflection-Principle}. Our contribution here is to establish that the constants are
\be{kappaExpr-kappastarExpr}
\kappa_\star=2^{\,3\,\alpha+2}\,d^{\,\alpha}\quad\mbox{and}\quad\kappa=\alpha\,\omega_d\(\frac{(1-m)^4}{2^{38}\,d^{\,4}\,\pi^{16\,(1-m)\,\alpha}\,\overline\kappa^{\,\alpha^2\,(1-m)}}\)^\frac2{(1-m)^2\,\alpha\,d}\,.
\ee

\begin{proof}\begin{steps} Without loss of generality we assume that
$x_0=0$. The proof is a combination of several steps. Different positive constants that depend on $m$ and $d$ are denoted by $C_i$.

\stepitem {\sl Reduction.} By comparison we may assume $\supp(u_0)\subset B_{R}(0)$. Indeed, a general $u_0\ge 0$ is greater than $u_0\chi_{B_R}$, $\chi_{B_R}$ being the characteristic function of $B_{R}$. If $v$ is the solution of the fast diffusion equation with initial data $u_0\chi_{B_R}$ (existence and uniqueness are well known in this case), then we obtain by comparison:
\[
\inf_{x\in B_{R}}u(t,x)\ge \inf_{x\in B_{R}}v(t,x)\,.
\]

\stepitem{\sl A priori estimates.} The so called smoothing effect (see e.g.~\cite[Theorem~2.2]{Herrero1985} , or~\cite{Vazquez2006}) asserts that for any $t>0$ and $x \in\RR^d$ we have:
\be{est.hp}
u(t,x)\le \overline{\kappa}\,\frac{\|u_0\|_1^\frac2\alpha}{t^\frac d\alpha}\,.
\ee
where $\alpha=2-d\,(1-m)$. We remark that~\eqref{est.hp} can be deduced from inequality~\eqref{BV-1} of Lemma~\ref{Lem:LocalSmoothingEffect} by simply takin the limit $R\rightarrow \infty$. The explicit expression of the constant $\overline{\kappa}$ is given in~\eqref{kappa}. We remark that $\|u_0\|_1=M_{R}$ since $u_0$ is nonnegative and supported in $B_{R}$, so that we get $ u(t,x)\le \overline{\kappa} M_{R}^\frac2\alpha\,t^{-\frac d\alpha}$. Let $b=2-1/d$, an integration over $B_{2^bR}$ gives then:
\be{HP.1.Apriori}
\int_{B_{2^bR}}u(t,x)\dx\le \overline{\kappa}\,\frac{\omega_d}{d}\,\frac{M_R^\frac2\alpha}{t^\frac d\alpha} \left(2^b\,R\right)^d\le C_2\,\frac{M_R^\frac2\alpha}{t^\frac d\alpha}\,R^d\,,
\ee
where $C_2$ can be chosen as
\be{C2}
C_2:=2^d\,\max\Big\{1,\overline{\kappa}\,\frac{\omega_d}{d} \Big\}\,.
\ee

\stepitem {\sl Aleksandrov Principle}. In this step we use the so-called Aleksandrov Reflection Principle, see Proposition~\ref{Local.Aleks} in section~\ref{sec:Aleksandrov-Reflection-Principle} for its proof. This principle reads:
\be{Posit.Alex}
\int_{B_{2\,R+r}\setminus B_{2^bR}}u(t,x)\dx\le A_{d}\,r^d u(t,0)
\ee
where $A_d$ is as in~\eqref{AD} and $b=2-1/d$. One has to remember of the condition
\be{r-condition}
r\,\ge\,(2^{(d-1)/d}-1)\,2\,R.
\ee
We refer to Proposition~\ref{Local.Aleks} and formula {\rm (\ref{Aleks.Mean.r})} in section~\ref{sec:Aleksandrov-Reflection-Principle} for more details.

\stepitem{\sl Integral estimate.} Thanks to Lemma~\ref{HP-Lemma}, for any $R,r>0$ and $s,t\ge 0$ one has
\[
\int_{B_{2\,R}}u(s,x)\dx\le C_3
\left[\int_{B_{2\,R+r}}u(t,x)\dx+\frac{|s-t|^{1/(1-m)}}{r^{(2-d\,(1-m))/(1-m)}}\right],
\]
where the constant $C_3$ has to satisfy $C_3\ge\max(1,\cc)$ and $\cc$ is defined in~\eqref{C3.constant}. In what follows we prefer to take a larger constant (for reasons that will be clarified later) and put
\[\label{C_3}
C_3=\left(\frac{16}{1-m}\right)^{\frac1{1-m}}\max\left(1,2\,\omega_d\,\left[\frac{16\,(d+1)\,(3+m)}{1-m}\right]^{\frac1{1-m}}\right).
\]
We let $s=0$ and rewrite it in a form more useful for our purposes:
\be{Quasi.Mass.Posit}
\int_{B_{2\,R+r}}u(t,x)\dx\ge\frac{M_{R}}{C_3}
-\frac{t^{\frac1{1-m}}}{r^{\frac{\alpha}{1-m}}}\,.
\ee
We recall that $M_{2\,R}=M_R$ since $u_0$ is nonnegative and supported in $B_{R}$.

\stepitem We now put together all previous calculations:
\[\begin{split}
\int_{B_{2\,R+r}}u(t,x)\dx&=\int_{B_{2\,R}}u(t,x)\dx+\int_{B_{2\,R+r}\setminus
B_{2^bR}}u(t,x)\dx\\
&\le C_2\,\frac{M_{R}^\frac2\alpha\,R^d}{t^\frac d\alpha}+A_d\,r^d\,u(t,0)\,.
\end{split}\]
This follows from (\ref{HP.1.Apriori}) and (\ref{Posit.Alex}). Next, we use (\ref{Quasi.Mass.Posit}) to obtain:
\[
\frac{M_{R}}{C_3}
-\frac{t^{\frac1{1-m}}}{r^{\frac{\alpha}{1-m}}}\le\int_{B_{2\,R+r}}u(t,x)\dx\le
C_2\,\frac{M_{R}^\frac2\alpha\,R^d}{t^\frac d\alpha}+A_d\,r^d u(t,0)\,.
\]
Finally we obtain
\[\label{Posit.Non.Opt}
u(t,0) \ge\frac1{A_d}\left[\left(\frac{M_{R}}{C_3}-C_2\,\frac{M_{R}^\frac2\alpha\,R^d}{t^\frac d\alpha}
\right)\frac1{r^d}-\frac{t^{\frac1{1-m}}}{r^{\frac2{1-m}}}\right]=\frac1{A_d}\left[\frac{B(t)}{r^d}-\frac{t^{\frac1{1-m}}}{r^{\frac2{1-m}}}\right]\,.
\]

\stepitem The function $B(t)$ is positive when
\[
B(t)=\frac{M_{R}}{C_3}-C_2\,\frac{M_{R}^\frac2\alpha\,R^d}{t^\frac d\alpha}>0
\Longleftrightarrow t>\left(C_3\,C_2\right)^{\frac{\alpha}{d}}\,.
M_{R}^{1-m} R^\alpha
\]
Let us define
\be{tilde-kappa-star}
\tilde{\kappa}_\star:=4\,\left(C_3\,C_2\right)^\frac{\alpha}{d}\quad\mbox{and}\quad \tilde{\underline{t}}=\tfrac12\,\tilde{\kappa}_\star\,M_R^{1-m}\,R^{\alpha}\,.
\ee
We assume that $t\ge 2\,\tilde{\underline{t}}$ and optimize the function
\[
f(r)=\frac1{A_d}\left[\frac{B(t)}{r^d}-\frac{t^{\frac1{1-m}}}{r^{\frac2{1-m}}}\right]
\]
with respect to $r(t)=r>0$. The function $f$ reaches its maximum at $r=r_{max}(t)$ given by
\[
r_{max}(t)=\left(\frac2{d\,(1-m)}\right)^\frac{1-m}{\alpha}\,\frac{t^\frac1{\alpha}}{B(t)^\frac{1-m}{\alpha}}\,.
\]
We recall that we have to verify that $r_{max}$ satisfies condition~\eqref{r-condition}, namely that $r_{max}(t)>\left(2^{(d-1)/d}-1\right)2\,R$. To check this we optimize in $t$ the function $r_{max}(t)$ with respect to $t\in(2\tilde{\underline{t}},+\infty)$. The minimum of $r_{max}(t)$ is attained at a time $t=t_{min}$ given by
\[
t_{min}=\left(\frac2\alpha\,C_2\,C_3\right)^\frac{\alpha}{d}\,M_R^{1-m}\,R^\alpha\,.
\]
We compute $r_{max}(t_{min})$ and find that
\[
r_{max}(t_{min})=\left(\frac2{d\,(1-m)}\right)^\frac{2\,(1-m)}{\alpha}\,\left(\frac2\alpha\,C_2\right)^\frac1{d}\,C_3^\frac2{d\alpha}\,R\,.
\]
Therefore the condition $r_{max}(t_{min})>\left(2^{(d-1)/d}-1\right)2\,R$ is nothing more than a lower bound on the constants $C_2$ and $C_3$, namely that
\[
\left(\frac2{d\,(1-m)}\right)^\frac{2\,(1-m)}{\alpha}\,\left(\frac2\alpha\,C_2\right)^\frac1{d}\,C_3^\frac2{d\alpha}\,\ge 2^{(d-1)/d}-1\,.
\]
Such a lower bound is easily verified, by using the fact $m\in(m_1,1)$, we have $(1-m)^{-1}>d$ and therefore we have the following inequalities
\be{first-estimates}
\frac2{d\,(1-m)}\ge2\,,\quad\frac2\alpha=\frac2{2-d\,(1-m)}\ge1\,,\quad C_2\ge2^d\quad\mbox{and}\quad C_3\ge 16^d\,d^d\,,
\ee
therefore, from the above inequalities we find that
\[
\left(\frac2{d\,(1-m)}\right)^\frac{2\,(1-m)}{\alpha}\,\left(\frac2\alpha\,C_2\right)^\frac1{d}\,C_3^\frac2{d\alpha} \ge 32\,d\ge2^{(d-1)/d}-1\,,
\]
and so the such a lower bound is verified. Let us now continue with the proof.
\stepitem After a few straightforward computations, we show that the maximum value is attained for all $t>2\,\tilde{\underline{t}}$ as follows:
\[
f(r_{max})=\alpha\,A_d\frac{\left[
d\,(1-m)\right]^{\frac{d\,(1-m)}{\alpha}}}{2^\frac 2\alpha}
\left[\frac1{C_3}-C_2\,\frac{M_{R}^{\frac{d\,(1-m)}{\alpha}}R^d}{t^\frac d\alpha}\right]^\frac 2\alpha
\frac{M_{R}^\frac 2\alpha}{t^\frac d\alpha}>0\,.
\]
We get in this way the estimate:
\[
\begin{split}
u(t,0) &\ge K_1\,H_1(t)\,\frac{M_R^\frac 2\alpha}{t^\frac d\alpha}\,,
\end{split}
\]
where
\[
H_1(t)=\left[\frac1{C_3}-C_2\,\frac{M_{R}^{\frac{d\,(1-m)}{\alpha}}R^d}{t^\frac d\alpha}\right]^\frac 2\alpha\quad\mbox{and}\quad K_1=\alpha\,A_d\frac{\left[
d\,(1-m)\right]^{\frac{d\,(1-m)}{\alpha}}}{2^\frac 2\alpha}.
\]
A straightforward calculation shows that the function is non-decreasing in time, thus if $t\ge 2\,\tilde{\underline{t}}$:
\[
H_1(t)\ge H_1(2\,\tilde{\underline{t}})=C_3^{-\frac2\alpha}\,\left(1-4^{-\frac d\alpha}\right)^\frac2\alpha\,,
\]
and finally we obtain for $t\ge 2\,\tilde{\underline{t}}$ that
\be{Posit.Center}
u(t,0) \ge\,K_1\,C_3^{-\frac2\alpha}\,\left(1-4^{-\frac d\alpha}\right)^\frac2\alpha\,\frac{M_{R}^\frac 2\alpha}{t^\frac d\alpha}=\tilde{\underline{\kappa}}\,\frac{M_{R}^\frac 2\alpha}{t^\frac d\alpha}\,.
\ee

\stepitem {\sl From the center to the infimum.} Now we want to obtain a positivity estimate for the infimum of the solution $u$ in the ball $B_R=B_R(0)$. Suppose that the infimum is attained in some point $x_m\in\overline{B_R}$, so that $\inf_{x\in B_R}u(t,x)=u(t,x_m)$, then one can apply {\rm(\ref{Posit.Center})} to this point and obtain:
\[\label{Posit.Min}
u(t,x_m) \ge
\tilde{\underline{\kappa}}\,\frac{M_{2\,R}(x_m)^\frac 2\alpha}{t^\frac d\alpha}
\]
for $t>\tilde{\kappa}_\star M_{R}^{1-m}(x_m) R^\alpha$. Since the point $x_m\in\overline{B_R(0)}$ then it is clear that $B_R(0)\subset B_{2\,R}(x_m)\subset B_{4R}(x_0)$, and this leads to the inequality:
\[
M_{2\,R}(x_m)\ge M_R(0)\quad{\rm and}\quad M_{2\,R}(x_m)\le M_{4R}(0)
\]
since $M_{\varrho}(y)=\int_{B_{\varrho}(y)}u_0(x)\dx$ and $u_0\ge 0$. Thus, we have found that:
\[\label{Posit.Final}
\inf_{x\in B_R(0)}u(t,x) =u(t,x_m) \ge
\tilde{\underline{\kappa}}\,\frac{M_{2\,R}^\frac 2\alpha(x_m)}{t^\frac d\alpha}\ge
\tilde{\underline{\kappa}}\,\frac{M_{2\,R}^\frac 2\alpha(0)}{t^\frac d\alpha}=
\tilde{\underline{\kappa}}\,\frac{M_{R}^\frac 2\alpha(0)}{t^\frac d\alpha}\,.
\]
for $t>2\,\tilde{\underline{t}}(0)=\tilde{\kappa}_\star M_{4R}^{1-m}(0) R^\alpha=\tilde{\kappa}_\star\,M_{R}^{1-m}(0) R^\alpha$, after noticing that $M_{4R}(0)=M_{2\,R}(0)=M_R(0)$, since $\supp(u_0)\subset B_R(0)$. Finally we obtain the claimed estimate
\[\label{FDE.R}
\inf_{x\in B_R(0)}u(t,x)\ge
\tilde{\underline{\kappa}}\,\frac{M_{R}^\frac 2\alpha}{t^\frac d\alpha}\quad\forall\,t\ge 2\,\tilde{\underline{t}}\,.
\]

\stepitem The last step consists in obtaining a lower estimate when $0\le t\le 2\,\tilde{\underline{t}}$. To this end we consider the fundamental estimate of B\'enilan-Crandall~\cite{Benilan1981}:
\[
u_t(t,x)\le\frac{u(t,x)}{(1-m)t}\,.
\]
This easily implies that the function:
\[
u(t,x)t^{-1/(1-m)}
\]
is non-increasing in time, thus for any $t\in (0,2\,\tilde{\underline{t}})$ we have that
\[
u(t,x)\ge u(2\,\underline{t},x)\,\frac{t^{1/(1-m)}}{(2\,\tilde{\underline{t}})^{1/(1-m)}}\ge\,\tilde{\underline{\kappa}}\,\tilde{\kappa}_\star^{-\frac2{1-m}}\,\left(t\,R^{-2}\right)^\frac1{1-m}\,.
\]
which is exactly inequality~\eqref{BV-3}. It is straightforward to verify that the constant~$\tilde{\kappa}$ has the value
\begin{equation}\label{kappa-tilde}
\tilde{\kappa}
=\tilde{\underline{\kappa}}\,\tilde{\kappa}_\star^{-\frac2{1-m}}\,=\alpha\,A_d\frac{\left[
d\,(1-m)\right]^{\frac{d\,(1-m)}{\alpha}}}{2^\frac 2\alpha}\,C_3^{-\frac2\alpha}\,\left(1-4^{-\frac d\alpha}\right)^\frac2\alpha\,\tilde{\kappa}_\star^{-\frac2{1-m}}\,.
\ee

\stepitem\textit{Simplification of the constants}. In this step we are going to simplify the expression of some constants in order to obtain the expression in~\eqref{kappaExpr-kappastarExpr}. This translates into estimates from below of the actual values of constants $\tilde{\overline{\kappa}}$ and $\tilde{\kappa}_\star$, and in order to do so, we need to estimate $C_2$ and $ C_3$. Let us begin with $C_2$, since we only need an estimate from below. We learn from~\eqref{omega-over-d} that $\omega_d/d\le \pi^2$ for any $d\ge1$. It is then clear from~\eqref{C2} that
\[\label{C2-2}
2^d\le C_2\le 2^d\,\overline{\kappa}\,\pi^2\,.
\]

In the case of $C_3$ we already have a lower bound given in~\eqref{first-estimates}, in what follows we compute the upper bound. Let us recall that from~\eqref{omega-d} we have that for any $d\ge1$, $\omega_d\le 16\pi^3\,/15$. Since $m<1$ we have that
\[
16\(d+1\)\(3+m\)\le 64\(d+1\)\le 128\,d\,.
\]

Combining the above inequality, with the estimates on $\omega_d$ and the defintion of $C_3$ we get
\[\label{second-estimate}
\(4d\)^d\,\le C_3\,\le\(\frac{128\,d}{1-m}\)^\frac2{1-m}\,4\,\pi^3\,.
\]
Therefore, we can estimate $\tilde{\kappa}_\star$ and obtain the expression of $\kappa_\star$
\[\label{kappa-star}
\tilde{\kappa}_\star=4\(C_2\,C_3\)^\frac{\alpha}{d} \ge 2^2\(2^{5\,d}\,d^d\)^\frac{\alpha}{d}=2^{3\,\alpha+2}\,d^{\alpha}=:\kappa_\star\,.
\]
Let us simplify $\tilde{\kappa}$. By combining~\eqref{kappa-tilde},~\eqref{tilde-kappa-star} and~\eqref{AD}, we get that
\[\label{second-definition-kappa}
\tilde{\kappa}\ge\alpha\,\omega_d\,2^{2d-2-\frac{2\,(1-m)+4\,\alpha}{\alpha(1-m)}}\,\left[
d\,(1-m)\right]^{\frac{d\,(1-m)}{\alpha}}\,C_3^{-\frac{4}{\alpha\,d\,(1-m)}}\,C_2^{-\frac{2\,\alpha}{d\,(1-m)}}\,\left(1-4^{-\frac d\alpha}\right)^\frac2\alpha\,.
\]
Let us begin simplifying the expression $\left(1-4^{-\frac d\alpha}\right)$. We first notice that, since $\alpha \in\(1,2\)$ we have that $1-4^{-\frac d\alpha} \ge 1-4^{-\frac{d}2} $, which is an expression monotone increasing in $d$. We have therefore that
\[
\left(1-4^{-\frac d\alpha}\right)^\frac2\alpha\ge \left(1-4^{-\frac12}\right)^\frac2\alpha=2^{-\frac2\alpha}\,.
\]
Combining all together we find
\[
\tilde{\kappa}\ge\alpha\,\omega_d\,2^{-\mathfrak{a}}\,\pi^{-\mathfrak{b}}\,\overline{\kappa}^{-\frac{2\,\alpha}{d\,(1-m)}}\,d^{\frac{d\,(1-m)}{\alpha}-\frac{8}{\alpha(1-m)^2d}}\(1-m\)^\frac{d^2\,(1-m)^3+8}{\alpha\,(1-m)^2\,d}\,,
\]
where
\[
\mathfrak{a}=\frac{56+8\,(1-m)+2\,\alpha^2\,d\,(1-m)+2\,\alpha\,(1-m)^2\,d}{\alpha\,(1-m)^2\,d}-2\,d\quad\mbox{and}\quad
\mathfrak{b}=\frac{12+4\,\alpha^2}{d\,(1-m)}\,.
\]
Since $m_1<m<1$, and $d\,(1-m)<1$, we can simplify the expression of $\mathfrak{a}$ and $\mathfrak{b}$ into
\[
\mathfrak{a}\le\frac{76}{\alpha\,(1-m)^2\,d}\quad\mbox{and}\quad\mathfrak{b}\le\frac{32}{d\,(1-m)}\,.
\]
By summing up all estimates above and estimating the exponents of $(1-m)$ and~$d$, we get
\[\label{kappa-definition}
\tilde{\kappa} \ge\frac{\alpha\,\omega_d\,\left(\frac{1-m}{d}\right)^\frac8{\alpha\,(1-m)^2\,d}}{2^\frac{76}{\alpha\,(1-m)^2\,d}\,\pi^\frac{32}{d\,(1-m)}\,\overline{\kappa}^\frac{2\,\alpha}{d\,(1-m)}}=\kappa\,.
\]
\end{steps}
\end{proof}

\subsection{Details on the inner estimate in terms of the free energy}\label{Sec:InnerDetails}

In~\cite[Propostion 11]{BDNS2020} the following proposition is proven.
\begin{proposition}\label{Proposition11} Assume that $m\in(m_1,1)$ if $d\ge2$, $m\in(1/3,1)$ if $d=1$ and let $\varepsilon\in(0,1/2)$, small enough and $G>0$ be given. There exist a numerical constant $\mathsf K>0$ and an exponent $\vartheta\in(0,1)$ such that, for any $t\ge 4\,T(\varepsilon)$, any solution $u$ of~\eqref{FD} with nonnegative initial datum $u_0\in\mathrm L^1(\R^d)$ satisfying~ $\ird{u_0}=\ird{\mB}$ and $\mathcal F[u_0]\le G$, then satisfies
\be{control-radius-inequality2}
\left|\frac{u(t,x)}{B(t,x)}-1\right|\le\frac{\mathsf K}{\varepsilon^\frac1{1-m}}\,\left(\frac1t+\frac{\sqrt G}{R(t)}\right)^\vartheta\quad\mbox{if}\quad |x|\le 2\,\rho(\varepsilon)\,R(t)\,.
\ee
\end{proposition}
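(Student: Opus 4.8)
The plan is to combine the local upper and lower bounds of Section~\ref{Sec:local.estimates} with the free-energy/Barenblatt comparison machinery of~\cite{BDNS2020}, transported into self-similar variables. First I would rescale: passing from the fast diffusion equation~\eqref{FD} to the rescaled equation~\eqref{FDr} via the standard self-similar change of variables, the statement $|u(t,x)/B(t,x)-1|\le\cdots$ on $|x|\le 2\,\rho(\varepsilon)\,R(t)$ becomes an estimate of the relative error $|v(s,y)/\mB(y)-1|$ on a ball $|y|\le 2\,\rho(\varepsilon)$ for the rescaled solution $v$ at rescaled time $s$ corresponding to $t\ge 4\,T(\varepsilon)$. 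In these variables, the hypothesis $\mathcal F[u_0]\le G$ controls the initial free energy $\mathcal F[v(0,\cdot)]$, and the entropy--entropy production inequality~\eqref{EEP} yields the exponential decay $\mathcal F[v(s,\cdot)]\le \mathcal F[v(0,\cdot)]\,e^{-4s}\le G\,e^{-4s}$, which after undoing the time change translates into the factor $\bigl(1/t + \sqrt G/R(t)\bigr)$ appearing in~\eqref{control-radius-inequality2} (the $\sqrt G$ entering because the natural quantity controlling an $\mathrm L^1$ or weighted $\mathrm L^2$ deviation of $v$ from $\mB$ is $\sqrt{\mathcal F}$, via a Csisz\'ar--Kullback type inequality).

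Next I would set up the quantitative ``uniform convergence in relative error'' argument. The key is that $w:=v/\mB$ (or rather the difference $v-\mB$, suitably weighted) satisfies, after localization, a uniformly parabolic equation of the type~\eqref{HE.coeff} on a space-time cylinder whose ellipticity constants $\lambda_0,\lambda_1$ are controlled by the local upper bound~\eqref{BV-1} of Lemma~\ref{Lem:LocalSmoothingEffect} and the local lower bound~\eqref{BV-3} of Lemma~\ref{Posit.Thm.FDE}: these give $0<c_-\le v/\mB\le c_+<\infty$ on the relevant ball, so the pressure-type nonlinearity $m\,v^{m-1}$ is pinched between two positive constants, and linearizing the equation around these bounds produces coefficients satisfying~\eqref{HE.coeff.lambdas}. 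Then the H\"older continuity estimate of Theorem~\ref{Claim:4} (equivalently, Moser's Harnack inequality, Theorem~\ref{Claim:3}) upgrades the $\mathrm L^1$ (or $\mathrm L^2$) smallness of $v-\mB$ — which is what the free energy decay provides — into pointwise smallness with a loss encoded by the H\"older exponent $\nu$; this is the source of the exponent $\vartheta\in(0,1)$ in~\eqref{control-radius-inequality2}. Concretely, one interpolates: $\|v-\mB\|_{L^\infty(\text{small ball})}\lesssim \|v-\mB\|_{L^1}^{\,\vartheta}\,\|v-\mB\|_{C^\nu}^{\,1-\vartheta}$, the second factor being bounded by the uniform bounds, and the first by $\sqrt{\mathcal F}\lesssim \sqrt G\,e^{-2s}$.

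The factor $\varepsilon^{-1/(1-m)}$ is then tracked through the lower bound: the constant $\kappa$ in~\eqref{kappaExpr-kappastarExpr} controlling $\inf v$ from below on the ball of radius $\rho(\varepsilon)\,R(t)$ degenerates as $\varepsilon\to 0$ (equivalently as the ball grows or the time window shrinks), and the $(1-m)$-th root structure of the fast-diffusion smoothing/positivity estimates produces precisely a power $\varepsilon^{-1/(1-m)}$; this feeds into $\lambda_0^{-1}$ and hence into the Harnack/H\"older constant, which depends on $\mu=\lambda_1+1/\lambda_0$. I expect the main obstacle to be the bookkeeping needed to make $T(\varepsilon)$, $\rho(\varepsilon)$ and the cylinder geometry mutually consistent — one must choose the space-time cylinder on which~\eqref{BV-1} and~\eqref{BV-3} simultaneously apply with the claimed constants, verify that $t\ge 4\,T(\varepsilon)$ places the solution in the regime where the lower bound~\eqref{BV-3} holds (recall~\eqref{BV-3} is only valid up to time $2\underline t$, so a covering or iteration in time is needed), and then check that the resulting ellipticity ratio and the H\"older exponent $\nu=\log_4\bigl(\overline{\mathsf h}/(\overline{\mathsf h}-1)\bigr)$ combine to give a genuine $\vartheta\in(0,1)$ independent of $t$. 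The analytic content — free energy decay, smoothing, positivity, Harnack — is all already in place; the work is in the quantitative gluing.
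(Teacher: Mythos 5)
Your sketch correctly identifies the analytical strategy behind this statement (whose analytic proof lives in~\cite[Proposition~12]{BDNS2020}): the local upper and lower bounds of Lemmas~\ref{Lem:LocalSmoothingEffect} and~\ref{Posit.Thm.FDE} pinch $u/B$ and hence the diffusion coefficient $m\,u^{m-1}$, so that (after an admissible rescaling) the solution solves a linear equation of type~\eqref{HE.coeff} satisfying~\eqref{HE.coeff.lambdas}; the H\"older estimate of Theorem~\ref{Claim:4} then provides $C^\nu$ control with $\nu$ as in~\eqref{SUPPL-nu}; and the interpolation between $\mathrm L^1$ and $C^\nu$ that you propose is exactly inequality~\eqref{SUPPL-interpolation.inequality.cpt6} used here with $p=1$, producing $\vartheta=\nu/(d+\nu)$ as in~\eqref{SUPPL-theta}, with the $\mathrm L^1$ factor controlled by $\sqrt{\mathcal F}$ through a Csisz\'ar--Kullback bound. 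However, the proof that actually appears in this document does not re-derive that chain; it only carries out the constant bookkeeping the argument requires, namely: computing $c_1=2^{10}$ from Theorem~\ref{Claim:4} by bounding the parabolic distances $d(Q_1,Q_2)$ and $d(Q_3,Q_4)$ from below by $1/4$ for the cylinders~\eqref{cilinders-s}; verifying $0<\lambda_0\le\lambda_1<\infty$ by bounding the Barenblatt profiles $B(t-\tfrac1\alpha,\cdot)$ above and below on the fixed cylinders $Q_2$, $Q_4$; and algebraically simplifying $\overline C$ into $\mathsf K/\varepsilon^{1/(1-m)}$.

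There is one concrete point where your mechanism for $\varepsilon^{-1/(1-m)}$ cannot be what the paper does. You suggest that as $\varepsilon\to0$ the lower bound degenerates, this feeds into $\lambda_0^{-1}$, and hence into the Harnack/H\"older constant. But if $\lambda_0,\lambda_1$ depended on $\varepsilon$, then so would $\nu=\log_4\big(\overline{\mathsf h}/(\overline{\mathsf h}-1)\big)$ and $\vartheta$; yet the statement fixes $\vartheta\in(0,1)$ and lets only the prefactor diverge as $\varepsilon\to0$. In the paper's computation $\lambda_0$ and $\lambda_1$ are defined by sup/inf of Barenblatt profiles over the \emph{fixed} cylinders $Q_2$, $Q_4$, hence are $\varepsilon$-independent, and so are $\nu$ and $\vartheta$. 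The factor $\varepsilon^{-1/(1-m)}$ instead arises from the purely algebraic term $\big(1+4\,\lambdaBarenblatt^2\,Z^2\,\rho(\varepsilon)^2\big)^{1/(1-m)}$ inside $\overline C$, combined with the estimate $\rho(\varepsilon)^2\lesssim\big((1-m)^2\mu^2\,\varepsilon\big)^{-1}$. A related imprecision appears in your account of the factor $\big(1/t+\sqrt G/R(t)\big)$: only the $\sqrt G/R(t)$ term traces back to $\sqrt{\mathcal F}$, while the $1/t$ term has a separate origin (it is not produced by the exponential entropy decay $\mathcal F\le G\,e^{-4s}$ that you invoke).
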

The values of the parameters $\rho(\varepsilon)$ and $T(\varepsilon)$ can be found in Appendix~\ref{Appendix:UserGuide}. Here we give details on the computation of $\mathsf K$.

\begin{proof} In this section we have to explain how to compute the constant $c_1$, to prove that $\lambda_0$ and $\lambda_1$ are bounded and bounded away from zero and to obtain the final form of the constant $\overline C$ in formula $(83)$ of~\cite{BDNS2020}. Let us start with the first task

\noindent\textit{How to compute $c_1$.} Let us recall the cylinders
\begin{align}\label{cilinders-s}
Q_1&:=\left(1/2, 3/2\right)\times B_1(0)\,,\quad Q_2:=\left(1/4, 2\right)\times B_{8}(0)\,,\\
Q_3&:=\left(1/2, 3/2\right)\times B_1(0)\setminus B_{1/2}(0)\quad\mbox{and}\quad Q_4:=\left(1/4, 2\right)\times B_{8}(0)\setminus B_{\tfrac14}(0)\,.
\end{align} 
and let us assume that $v$ is a solution to~\eqref{HE.coeff} which satisfies~\eqref{HE.coeff.lambdas} for some $0<\lambda_0\le\lambda_1<\infty$.
In what follows we shall explain how to compute the constant $c_1$ in:

\boxedresult{
~\cite[Inequality $(66)$]{BDNS2020},
\[\label{holder-continuity-inequality-s}
\sup_{(t,x),(s,y)\in Q_{i}}\frac{|v(t,x)-v(s,y)|}{\big(|x-y|+|t-s|^{1/2}\big)^\nu}\le\,c_1\,\|v\|_{\mathrm L^\infty(Q_{i+1})}\quad\forall\,i\in\{1,2\}\,.
\]}
By applying Theorem~\ref{Claim:4} it is clear that the only ingredient needed is to estimate from below $d(Q_1,Q_2)$ and $d(Q_3,Q_4)$, where $d(\cdot, \cdot)$ is defined in~\eqref{parabolic-distance}. Let us consider the case of $d(Q_1, Q_2)$. By symmetry, it is clear that the infimum in~\eqref{parabolic-distance} is achieved by a couple of points $(t,x)\in\overline{Q_1}$, $(s, y)\in\partial Q_2$ such that either $|x|=1, t\in\left(1/2, 3/2\right)$ and $|y|=8, s\in\left(1/2, 3/2\right)$ or $t=1/2, y=1/4$ and $x, y\in B_1$. In both cases we have that $d(Q_1, Q_2)=|x-y|+|t-s|^\frac12\ge1/4$. By a very similar argument we can also conclude that $d(Q_3, Q_4)\ge1/4$. Therefore, we conclude that, in both cases, $c_1$ can be taken (accordingly to inequality~\eqref{holder-continuity-inequality}).
\[\label{c-1}
2\(128\)^\nu \max\left\{\frac1{d(Q_1, Q_2)}\,,\frac1{d(Q_3, Q_4)}\,\right\}^\nu\le 2\(512\)^\nu \le 2^{10}=:c_1\,,
\]
where we have used the fact that $\nu\in(0,1)$.

\noindent\textit{Estimates of $\lambda_0$ and $\lambda_1$ of~\cite[formula $(73)$]{BDNS2020}}. In \textit{Step 2} of~\cite[Proposition 11]{BDNS2020} we considered a solution $u(t,x)$ to~\eqref{FD} as a solution to the linear equation~\eqref{HE.coeff} with coefficients
\[
a(t,x)=m\,u^{m-1}(t,x)\,,\quad A(t,x)=a(t,x)\,\mathrm{Id}\,,
\]
where $\mathrm{Id}$ is the identity matrix on $\R^d$. We also observed that $u(t,x)$ (and its rescaled version $\hat{u}_{\tau, k}$) satisfies the condition~\eqref{HE.coeff.lambdas} (with the coefficient $a(t,x)$ given by the above expression above) and with

\boxedresult{
~\cite[Definition ($73$)]{BDNS2020}\,
\[\label{lambdas-s}\begin{split}
\lambda_0^{\frac1{m-1}}&:=m^\frac1{m-1}\,\overline C\,\max\big\{\sup_{Q_2}B(t-\tfrac1\alpha,x),\,\sup_{k\ge1}\sup_{Q_4}B(t-\tfrac1\alpha,x;k^\frac\alpha{1-m}\,\Mstar)\big\}\,,\\
\lambda_1^{\frac1{m-1}}&:=\,m^\frac1{m-1}\,\underline{C}\,\min\big\{\inf_{Q_2}B(t-\tfrac1\alpha,x)\,, \inf_{k\ge1}\inf_{Q_4}B(t-\tfrac1\alpha,x;k^\frac\alpha{1-m}\,\Mstar)\big\}\,.
\end{split}
\]
}
where $Q_i$ are as in~\eqref{cilinders-s}. Our task here is to give an estimate on $\lambda_0$, $\lambda_1$ and to show that they are bounded and bounded away from zero. Let us consider firs the case of $B(t-\tfrac1\alpha,x)$: for any $(t,x)\in(0, \infty)\times \R^d$ we have that
\[ 
B(t-\tfrac1\alpha,x)=\frac{t^\frac1{1-m}}{\lambdaBarenblatt^\frac\alpha{1-m}}\(\frac{t^\frac2\alpha}{\lambdaBarenblatt^2}+|x|^2\)^\frac1{m-1}\mbox{where}\quad\lambdaBarenblatt=\(\tfrac{1-m}{2\,m\,\alpha}\)^\frac1\alpha\,.
\]
We deduce therefore that, for any $(t,x)\in Q_2$ we have that
\[
\frac1{4^\frac1{1-m}\,\lambdaBarenblatt^\frac{\alpha}{1-m}}\(\frac{2^\frac2\alpha}{\lambdaBarenblatt^2}+2^6\)^\frac1{m-1}\le B(t-\tfrac1\alpha,x)\le \lambdaBarenblatt^d\,4^\frac d\alpha\,.
\]
This is enough to prove that $\lambda_0>0$ and $\lambda_1<\infty$. Let us consider $B(t-\tfrac1\alpha,x;k^\frac\alpha{1-m}\,\Mstar)$, we recall that
\[
B(t-\tfrac1\alpha,x;k^\frac\alpha{1-m}\,\Mstar)=\frac{t^\frac1{1-m}}{\lambdaBarenblatt^\frac{\alpha}{1-m}}\(\frac{t^\frac2\alpha}{k^2\,\lambdaBarenblatt^2}+|x|^2\)^\frac1{m-1}\,.
\]
Let us consider $(t,x)\in Q_4$, we have therefore
\[
\frac1{4^\frac1{1-m}\,\lambdaBarenblatt^\frac{\alpha}{1-m}}\(\frac{2^\frac2\alpha}{k^2\,\lambdaBarenblatt^2}+64\)^\frac1{m-1} \le B(t-\tfrac1\alpha,x;k^\frac\alpha{1-m}\,\Mstar) \le\frac{2^\frac1{1-m}}{\lambdaBarenblatt^\frac{\alpha}{1-m}}\(\frac1{\lambdaBarenblatt^2\,k^2\,4^\frac2\alpha}+\frac1{16}\)^\frac1{m-1}\,.
\]
From the above computation we deduce that
\[
\sup_{k\ge1}\sup_{Q_4}B(t-\tfrac1\alpha,x;k^\frac\alpha{1-m}\,\Mstar)\big\}\le\frac{2^\frac1{1-m}}{\lambdaBarenblatt^\frac{\alpha}{1-m}}\(\frac1{\lambdaBarenblatt^2\,4^\frac2\alpha}+\frac1{16}\)^\frac1{m-1}\,,
\]
while
\[
\frac1{2^\frac7{1-m}\,\lambdaBarenblatt^\frac{\alpha}{1-m}}\le \inf_{k\ge1}\inf_{Q_4}B(t-\tfrac1\alpha,x;k^\frac\alpha{1-m}\,\Mstar)\,.
\]
Combining all estimates together we obtain that
\[
0<\lambda_0\le \lambda_1<\infty\,,
\]
this completes the proof of this part.

\noindent\textit{Simplification of the constant $\overline{C}$}. Recall that $\alpha\in (1,2)$ so that
\[
\overline c_3+\frac{2\,\overline c_2}{\alpha}=\frac1{1-m}+\frac{2m}{2\,(1-m)^2\,\alpha^3}
\le\frac1{1-m}+\frac{m}{(1-m)^2}=\frac1{(1-m)^2}
\]
hence,
\begin{align*}\label{constant-c-control-radius.est}
\overline C&=\,2^{\frac{d}\alpha}\,\left(C\,+\(\overline c_3+\tfrac2\alpha\,\overline c_2\)\right)
\left(\sqrt{\tfrac{4\,\alpha\,\Mstar}m}+\(\overline c_3+\tfrac2\alpha\,\overline c_2\)\Mstar\right)^\vartheta\\
&\le\,2^{\frac{d}\alpha}\,
\left(C+\tfrac1{(1-m)^2}\right)
\left(\frac{2\,\alpha}{m}+\Mstar+\frac{\Mstar}{(1-m)^2}\right)^\vartheta\\
&\le\,2^{\frac{d}\alpha+\vartheta}\,\frac{\,\left(1+C\right)}{m^\vartheta(1-m)^{2(1+\vartheta)}}
(\alpha+\Mstar)^\vartheta
\end{align*}
where the constant $C$ is given by
\begin{align*}
C:=\lambdaBarenblatt^d&\left(1+4\,\lambdaBarenblatt^2\,Z^2\,\rho(\varepsilon)^2\right)^\frac1{1-m}\,C_{d, \nu, 1}\\
&\times\left(\(c_1\,4^\frac d\alpha\,\overline\kappa\,\Mstar^\frac2\alpha\,\frac{2^\nu}{2^\nu-1}+c_2\)^\frac{d}{d+\nu}+\frac1{\(2\,Z\,\rho(\varepsilon)\)^d}\,(2\,\Mstar)^\frac d{d+\nu}\right)\,.
\end{align*}
where
\[
c_1:=2^{10}\,,\quad c_2:=2\,\max\Big\{\lambdaBarenblatt\,,\||\nabla B(1-\tfrac1\alpha,x)|\|_{\mathrm L^\infty(\R^d)}\Big\}\quad\mbox{and}\quad Z=\(2\,\alpha\)^\frac1\alpha\,.
\]
For any $\varepsilon\in(0, \varepsilon_{m,d})\subset(0,1/2)$ we have that
\[\label{Rbis.est}
\sqrt{
\tfrac{(1-\underline\varepsilon)^m(\underline\varepsilon-\varepsilon)}{2^{1+m}}}\,
\frac1{\mu\,\sqrt\varepsilon} \le \underline \rho(\varepsilon)=\frac1\mu\(\big(1+(1+\varepsilon)^{1-m}\big)\,\tfrac{\(\tfrac{1-\varepsilon}{1-\underline\varepsilon}\)^{1-m}-1}{1-(1-\varepsilon)^{1-m}}\)^{1/2}\le\frac{2\,\sqrt{\underline \varepsilon}}{\mu\,\sqrt\varepsilon}
\]
and
\[\label{Rter.est}
\frac1{\sqrt{1-m}}\,\frac1{\mu\,\sqrt\varepsilon}
\le\overline \rho(\varepsilon)=\frac1\mu\left(\,\frac{(1+\varepsilon)^{1-m}+1}{(1+\varepsilon)^{1-m}-1}\right)^\frac12\,\le\frac4{ \sqrt{1-m}}\frac1{\mu\,\sqrt\varepsilon}\,.
\]
We recall that $\underline \varepsilon<1$, we obtain therefore that
\[\label{rho.eps.est}\begin{split}
\rho(\varepsilon)^2&:=\max\{\overline \rho(\varepsilon), \underline \rho(\varepsilon)\}^2
\le \max\left\{\frac4{\mu^2\,\varepsilon}
\,,\,\frac{16}{(1-m)}\frac1{\mu^2\,\varepsilon}\right\}
\le\frac{16}{(1-m)^2\,\mu^2\,\varepsilon}
\end{split}\]
and also, since $\varepsilon<1/2$ we have that
\[\label{rho.eps.est.low}\begin{split}
\rho(\varepsilon)^2&
\ge\frac1{(1-m)\mu^2\,\varepsilon} \ge\frac2{(1-m)\,\mu^2}\,.
\end{split}\]

Combining all above estimates together we find that
\[\begin{split}
\left(1+4\,\lambdaBarenblatt^2\,(2\,\alpha)^\frac2\alpha\,\rho(\varepsilon)^2\right)^\frac1{1-m} &\le \left(\frac{\mu^2+2^{\frac{6\,\alpha+2}{\alpha}} \lambdaBarenblatt^2}{(1-m)^2, \mu^2\,\varepsilon}\right)^\frac1{1-m} \\ &\le\frac{2^\frac{2+6\alpha}{\alpha(1-m)}}{(1-m)^\frac2{1-m}\varepsilon^\frac1{1-m}}\left(\frac{\mu^2+\alpha^\frac2\alpha\lambdaBarenblatt^2}{\mu^2}\right)^\frac1{1-m} \\
& \le\frac{2^\frac{3+6\alpha}{\alpha(1-m)}}{(1-m)^\frac2{1-m}\varepsilon^\frac1{1-m}}
\end{split}
\]
where in the last step we have used the identity $\mu=\lambdaBarenblatt\,\alpha^\frac1{\alpha}$.

Combining the above estimates we finally get 
\[ \begin{split}
\overline C & \le\frac{2^{\frac{d}\alpha+\frac{3+6\alpha}{\alpha(1-m)}+\vartheta}}{\varepsilon^\frac1{1-m}}\,\frac{(\alpha+\Mstar)^\vartheta}{m^\vartheta(1-m)^{2(1+\vartheta)+\frac2{1-m}}}\\
&\quad\times \left[1+\lambdaBarenblatt^d\,C_{d, \nu, 1} \left(\(2^{10+\frac{2d}{\alpha}}\,\overline\kappa\,\Mstar^\frac2\alpha\,\frac{2^\nu}{2^\nu-1}+c_2\)^\frac{d}{d+\nu}+\(\frac{\mu^2}{\alpha^\frac1{\alpha}}\)^d\,(2\,\Mstar)^\frac d{d+\nu}\right)\right] \\
&\le\frac{2^{\frac{3d}\alpha+\frac{3+6\alpha}{\alpha(1-m)}+\vartheta+10}}{\varepsilon^\frac1{1-m}}\,\frac{(\alpha+\Mstar)^\vartheta}{m^\vartheta(1-m)^{2(1+\vartheta)+\frac2{1-m}}} \\
&\quad \times\left[1+\lambdaBarenblatt^d\,C_{d, \nu, 1} \left(\(\overline\kappa\,\Mstar^\frac2\alpha\,\frac{2^\nu}{2^\nu-1}+c_2\)^\frac{d}{d+\nu}+\frac{\mu^{2d}}{\alpha^\frac d\alpha}\,\Mstar^\frac d{d+\nu}\right)\right]=:\frac{\mathsf K}{\varepsilon^\frac1{1-m}}\,.
\end{split} 
\]
The proof is completed.\end{proof}

\renewcommand{\thesection}{\Alph{section}.}
\renewcommand{\thesubsection}{\Alph{section}.\arabic{subsection}}
\appendix
\section{Further estimates and additional results}\label{Appendix}

Here we collect additional material concerning various estimates: the ``user guide'' of Appendix~\ref{Appendix:UserGuide} collects the formulas needed for the computation of $t_\star$ in Theorem~\ref{Thm:RelativeUniformDetailed}; Appendix~\ref{Appendix:Numerics} details how the numerical value of the constant on the disk in~\cite[Appendix~C.2]{BDNS2020} is computed; Appendix~\ref{Appendix:Truncation} is devoted to the precise definition of the truncation functions used in Section~\ref{sec.herrero.pierre}.

\subsection{A user guide for the computation of the threshold time}\label{Appendix:UserGuide}

Let us recall what the threshold time is. The results of~\cite[Theorem~4]{BDNS2020} and~\cite[Proposition~12]{BDNS2020} can be summarized as follows.
\begin{theorem}[\cite{BDNS2020}]\label{Thm:RelativeUniformDetailed} Assume that $m\in(m_1,1)$ if $d\ge2$, $m\in(1/3,1)$ if \hbox{$d=1$}. There is a numerical constant $\varepsilon_{m,d}\in(0,1/2)$, a real number $\nu>0$, and a positive numerical constant $\taustar=\taustar(m,d)$ with $\lim_{m\to1_-}\taustar(m,d)=+\infty$ such that the following property holds: for any $A>0$ and $G>0$, if $u$ is a solution of~\eqref{FD} with nonnegative initial datum $u_0\in\mathrm L^1(\R^d)$ such that $\ird{u_0}=\ird{\mB}$, $\mathcal F[u_0]\le G$ and
\begin{equation}\tag{H$_A$}\label{hyp:Harnack}
\sup_{r>0}r^\frac{d\,(m-m_c)}{(1-m)}\int_{|x|>r}u_0\,dx\le A<\infty
\ee
and if $\varepsilon\in(0,\varepsilon_{m,d})$, then
\[\label{control}
\sup_{x\in\R^d}\left|\frac{u(t,x)}{B(t,x)}-1\right|\le\varepsilon\quad\forall\,t\ge t_\star\,,
\]
where
\be{t.star.simplified}
t_\star=\frac{\taustar}{\varepsilon^{\mathsf a}}\left(1+A^{1-m}+G^\frac\alpha2\right)\quad\mbox{with}\quad\mathsf a=\frac{\alpha}{\vartheta}\,\frac{2-m}{1-m}\,.
\ee
\end{theorem}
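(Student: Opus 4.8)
The plan is to assemble the local estimates and the improved entropy--entropy production inequality collected above (together with the results of~\cite{BDNS2020}) into the threshold-time bound, following the two-time-layer strategy. First I would pass to self-similar variables, so that~\eqref{FD} becomes~\eqref{FDr} and the family of Barenblatt solutions becomes the single stationary profile $\mB$; the normalisations $\ird{u_0}=\ird{\mB}$ and $\mathcal F[u_0]\le G$ are preserved, and the tail hypothesis $(H_A)$ turns into a uniform bound on the mass carried outside large balls. The goal is then to prove that the relative error $\sup_x|v(t,\cdot)/\mB-1|$ drops below $\varepsilon$ after a time $t_\star$ of the form~\eqref{t.star.simplified}, by splitting space into the \emph{inner} region $\{|x|\le 2\,\rho(\varepsilon)\,R(t)\}$ and the complementary \emph{outer} region.

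On the inner region, Proposition~\ref{Proposition11} gives directly
\[
\sup_{|x|\le 2\,\rho(\varepsilon)\,R(t)}\left|\frac{u(t,x)}{B(t,x)}-1\right|\le\frac{\mathsf K}{\varepsilon^{1/(1-m)}}\left(\frac1t+\frac{\sqrt G}{R(t)}\right)^\vartheta.
\]
Since $R(t)$ grows like $t^{1/\alpha}$, imposing that this be at most $\varepsilon$ is equivalent to $t^{-1/\alpha}\lesssim(\varepsilon^{(2-m)/(1-m)}/\mathsf K)^{1/\vartheta}$, which is a lower bound on $t$ of the shape $\taustar\,\varepsilon^{-\mathsf a}\,(1+G^{\alpha/2})$ with $\mathsf a=\frac\alpha\vartheta\,\frac{2-m}{1-m}$; this is exactly where the exponent $\mathsf a$ and the $G^{\alpha/2}$ factor of~\eqref{t.star.simplified} originate. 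On the outer region, I would bound $u$ from above by Lemma~\ref{Lem:LocalSmoothingEffect} together with the Aleksandrov-type comparison of Proposition~\ref{Local.Aleks} fed with the tail hypothesis $(H_A)$, and compare with the explicit polynomially decaying tail of $B(t,\cdot)$, so that both are comparably small there once $t$ exceeds a threshold of the shape $\taustar\,\varepsilon^{-\mathsf a}\,(1+A^{1-m})$; this produces the $A^{1-m}$ term. Taking the maximum of the two thresholds (and of $4\,T(\varepsilon)$, which is then absorbed), together with the monotonicity in $t$ of the inner bound, yields $|v(t,\cdot)/\mB-1|\le\varepsilon$ for all $t\ge t_\star$. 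The local lower bound of Lemma~\ref{Posit.Thm.FDE} is used along the way to keep $v$ bounded below by a fixed fraction of $\mB$ on the relevant balls, so that it is really the relative error, and not merely the difference, that is controlled; Proposition~\ref{Prop:iEEP} enters through the decay $\mathcal F[v(t)]\le\mathcal F[v(T)]\,\mathrm e^{-(4+\eta)(t-T)}$ that governs the quantities $G$ and $R(t)$ appearing in the inner bound.

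The main obstacle is the outer-region estimate: one has to combine the mass-confinement encoded in $(H_A)$, the Herrero--Pierre $\mathrm L^1$ bound of Lemma~\ref{HP-Lemma}, and the local smoothing effect so that the threshold picks up precisely $A^{1-m}$ (and no worse power of $A$) while the $\varepsilon$-dependence remains $\varepsilon^{-\mathsf a}$ with the \emph{same} exponent $\mathsf a$ as in the inner region; keeping the two exponents aligned, optimising the free parameter $\rho(\varepsilon)$ (whose explicit value is recorded in Appendix~\ref{Appendix:UserGuide}), and checking that the resulting $\taustar$ depends only on $m$ and $d$ with $\taustar(m,d)\to+\infty$ as $m\to1_-$ is the delicate bookkeeping. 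Everything else is the assembly of already-established statements and the absorption of the additive constant $1$ to cover the regime of small data.
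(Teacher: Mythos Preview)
Your inner/outer decomposition and the identification of where the exponent $\mathsf a$ and the factor $G^{\alpha/2}$ come from are correct and match the paper's user guide exactly: Proposition~\ref{Proposition11} handles $\{|x|\le 2\,\rho(\varepsilon)\,R(t)\}$ and forcing its right-hand side below $\varepsilon$ yields the threshold $\kappa_2(\varepsilon,m)=\(4\,\alpha\)^{\alpha-1}\mathsf K^{\alpha/\vartheta}\varepsilon^{-\mathsf a}$, while the outer region contributes the $A^{1-m}$ term through the thresholds $\overline T(\varepsilon)$ and $\underline T(\varepsilon)$.

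There are, however, two points where your sketch drifts from the actual argument. First, Proposition~\ref{Prop:iEEP} plays \emph{no role} in the proof of Theorem~\ref{Thm:RelativeUniformDetailed}: the logical order is the reverse. Theorem~\ref{Thm:RelativeUniformDetailed} establishes the condition~\eqref{Uniform} after time $t_\star$, and only then can the improved inequality be invoked (in the later stability theorem of~\cite{BDNS2020}). The appearance of $G$ in Proposition~\ref{Proposition11} comes solely from the hypothesis $\mathcal F[u_0]\le G$ and the monotonicity of $\mathcal F$ along the flow; no improved decay rate is needed or used here.

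Second, the outer region is not handled by a direct combination of the local smoothing effect and Aleksandrov's principle as you describe, and you treat only the upper bound. The paper (via~\cite[Corollaries~9 and~10]{BDNS2020}) proceeds through global comparison with time-shifted Barenblatt profiles of different masses: one shows $B(t-\underline t-\tfrac1\alpha,x;\underline M)\le u(t,x)\le B(t+\overline t-\tfrac1\alpha,x;\overline M)$, and then checks that for $|x|\ge\rho(\varepsilon)\,R(t)$ and $t\ge T(\varepsilon)$ these envelopes lie between $(1-\varepsilon)B$ and $(1+\varepsilon)B$. The local results you cite (Lemmas~\ref{HP-Lemma},~\ref{Lem:LocalSmoothingEffect},~\ref{Posit.Thm.FDE} and Proposition~\ref{Local.Aleks}) are the ingredients from which $\overline M$, $\underline M$, $\overline t$, $\underline t$ are built, but the comparison with Barenblatt is the mechanism that actually produces the relative-error bound in the tail; the Aleksandrov principle in particular gives a lower bound at the centre of a ball, not an upper bound at infinity, so it cannot be ``fed with~$(H_A)$'' the way you suggest. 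This is precisely the bookkeeping you flag as the main obstacle, and the resolution is the global Barenblatt sandwich rather than a direct local estimate.
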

\noindent We do not reproduce the proof here but establish the expression of $\taustar$ by collecting all intermediate constants and formulas. We assume from now on that $u$ is a solution of~\eqref{FD} which satisfies all assumptions of Theorem~\ref{Thm:RelativeUniformDetailed}.

\medskip Let us start by recalling the definition of the main numerical parameters:
\[
m_c=\frac{d-2}d\,,\;m_1=\frac{d-1}d\,,\;\alpha=d\,(m-m_c)\,,\;\mu=\(\frac{1-m}{2\,m}\)^\frac1\alpha\;\mbox{and}\;\lambdaBarenblatt=\(\frac{1-m}{2\,m\,\alpha}\)^\frac1\alpha\,.
\]
\boxedresult{\cite[Corollary 9]{BDNS2020} 
\[\label{SUPPL-GHP-6Lower}\tag{49}
u(t,x)\ge(1-\varepsilon)\,B(t\,,\,x\,)\quad\mbox{if}\quad |x|\ge R(t)\,\underline \rho(\varepsilon)
\quad\mbox{and}\quad\varepsilon\in\(0,\underline\varepsilon\).
\]
}
\newtagform{starred}{\cite[Eq.~(}{)]{BDNS2020}}\usetagform{starred}
Here
\[\label{SUPPL-R}\tag{21}
R(t)=(1+\alpha\,t)^{1/\alpha}
\]
and
\[
\underline\varepsilon:=1-\(\underline M\,/\Mstar\)^\frac2\alpha
\]
is a numerical constant which is computed from
\[\label{SUPPL-underlineM}\tag{47}
\underline M:=\min\left\{2^{-\,d/2}\,\Big(\frac\kappa{\lambdaBarenblatt^d}\Big)^{\alpha/2},\frac{\kappa}{\big(d\,(1-m)\big)^{ d/2}\,\alpha^\frac\alpha{2\,(1-m)}}\right\}\;\kappa_\star^\frac1{1-m}\,\Mstar^2\,.
\]
We recall that $\kappa$ and $\kappa_\star$ have been defined in~\eqrefstd{kappaExpr-kappastarExpr} and are given by
\[
\kappa_\star=2^{\,3\,\alpha+2}\,d^{\,\alpha}\quad\mbox{and}\quad\kappa=\alpha\,\omega_d\(\frac{(1-m)^4}{2^{38}\,d^{\,4}\,\pi^{16\,(1-m)\,\alpha}\,\overline\kappa^{\,\alpha^2\,(1-m)}}\)^\frac2{(1-m)^2\,\alpha\,d}\,.
\]
As a byproduct of~\cite[Proposition~8]{BDNS2020}, by integrating over $\R^d$, we deduce from
\[\label{SUPPL-GHP-2}\tag{38}
u(t,x)\ge B\big(t-\underline t-\tfrac1\alpha\,,\,x\,;\,\underline M\big)
\]
that $\underline M\,/\Mstar<1$, which proves that $\underline\varepsilon>0$. The two other constants of Corollary~9 are given by
\[\label{SUPPL-Rbis}\tag{52}
\underline \rho(\varepsilon):=\frac1\mu\(\big(1+(1+\varepsilon)^{1-m}\big)\,\frac{\(\tfrac{1-\varepsilon}{1-\underline\varepsilon}\)^{1-m}-1}{1-(1-\varepsilon)^{1-m}}\)^{1/2}
\]
and
\[\label{SUPPL-t1bis}\tag{51}
\underline T(\varepsilon):=\frac{\kappa_\star\(2\,A\)^{1-m}+\frac2\alpha}{1-(1-\varepsilon)^{1-m}}\,.
\]

\boxedresult{\cite[Corollary 10]{BDNS2020}
\[\label{SUPPL-GHP-6Upper}\tag{54}
u(t,x)\le(1+\varepsilon)\,B(t\,,\,x\,)\quad\mbox{if}\quad |x|\ge\,R(t)\,\overline \rho(\varepsilon)\quad\mbox{and}\quad\varepsilon\in\(0,\overline\varepsilon\)\,.
\]
}
 As a byproduct of Proposition~7, by integrating over $\R^d$, we deduce from
\[\label{SUPPL-GHP-1}\tag{32}
u(t,x)\le\,B\big(t+\overline t-\tfrac1\alpha\,,\,x\,;\,\overline M\big)
\]
that $\overline M\,/\Mstar>1$, which proves that
\[
\overline\varepsilon:=\(\overline M/\Mstar\)^\frac2\alpha-1>0\,.
\]
Notice that $\overline\varepsilon$ is a numerical constant. The two other constants of Corollary~10 are given by
\[\label{SUPPL-Rter}\tag{57}
\overline \rho(\varepsilon)
:=\frac1\mu\left(\frac{(1+\varepsilon)^{1-m}+1}{(1+\varepsilon)^{1-m}-1}\right)^\frac12
\]
and
\[\label{SUPPL-Tter}\tag{56}
\overline T(\varepsilon):=\frac{2\,\overline t}{(1+\varepsilon)^{1-m}-1}
\]
where 
\[\label{SUPPL-toverline}\tag{36}
c:=\max\big\{1, 2^{5-m}\,\overline\kappa^{1-m}\,\lambdaBarenblatt^\alpha\big\}\,,\quad\overline t:=c\,t_0\,,
\]
$\overline\kappa$ is given by~\eqrefstd{kappa}, and
\[\label{SUPPL-GHP-1-time}\tag{34}
t_0:=A^{1-m}\,.
\]
\boxedresult{\cite[Proposition 11]{BDNS2020}
\[\label{SUPPL-control-radius-inequality2}\tag{61}
\left|\frac{u(t,x)}{B(t,x)}-1\right|\le \overline\,\mathsf K\,\left(\frac1t+\frac{\sqrt G}{R(t)}\right)^\vartheta\quad\mbox{if}\quad |x|\le 2\,\rho(\varepsilon)\,R(t)\quad\mbox{and}\quad\varepsilon\in\left(0, \varepsilon_{m,d}\right)\,.
\]
}
The range of admissible $\varepsilon$ is determined by
\[\label{SUPPL-epsilon.md.def}\tag{59}
\varepsilon_{m,d}:=\min\left\{\overline\varepsilon,\,\underline\varepsilon,\,\tfrac12\right\}
\]
and $\varepsilon\le\chi\,\eta$ where $\eta=2\,d\,(m-m_1)$ and $\chi:=\frac m{266+56\,m}$. The exponent
\[\label{SUPPL-theta}\tag{78}
\vartheta=\frac{\nu}{d+\nu}\,.
\]
is defined as follows. Let
\[\label{SUPPL-nu}\tag{67}
\nu:=\log_4\(\frac{\overline{\mathsf h}}{\overline{\mathsf h}-1}\)\quad\mbox{with}\quad\overline{\mathsf h}:=\mathsf h^{\lambda_1+1/\lambda_0}\,.
\]
The value of the constant $\mathsf h$ has been computed in~\cite{bonforte2020fine} (also see~\eqrefstd{h}) and is given by
\[\label{SUPPL-h}\tag{65}
\mathsf h:=\exp\left[2^{d+4}\,3^d\,d+c_0^3\,2^{2\,d+7}\left(1+\frac{2^{d+2}}{(\sqrt2-1)^{2\,d+4}}\right)\sigma\right]
\]
where
\[\label{SUPPL-c_0}
c_0=3^\frac2{d}\,2^\frac{(d+2)\,(3\,d^2+18\,d+24)+13}{2\,d}\(\tfrac{(2+d)^{1+\frac4{d^2}}}{d^{1+\frac2{d^2}}}\)^{(d+1)(d+2)}\,\mathcal K^\frac{2\,d+4}{d}\,,
\]
\[
\sigma=\sum_{j=0}^{\infty}\left(\tfrac34\right)^j\,\big((2+j)\,(1+j)\big)^{2\,d+4}
\]
and $\mathcal K$ is the optimal constant in the interpolation inequality~\eqrefstd{GNS111}, that is,
\[
\|f\|^2_{\mathrm L^{\pc}(B)}\le\mathcal K\(\|\nabla f\|^2_{\mathrm L^2(B)}+\|f\|^2_{\mathrm L^2(B)}\).
\]
The values of $\mathcal K$ are given in Table~\ref{table.k.bar}. We refer to Section~\ref{Sec:InnerDetails} for the values of $\lambda_0$ and $\lambda_1$.

\medskip As for the other constants, we have
\[\label{SUPPL-RE}\tag{60}
\rho(\varepsilon):=\max\big\{\overline \rho(\varepsilon),\,\underline \rho(\varepsilon)\big\}\,,\quad T(\varepsilon):=\max\big\{\overline T(\varepsilon),\,\underline T(\varepsilon)\big\}
\]
and
\[\label{SUPPL-constant-c-control-radius.suppl.1}\tag{84} \begin{split}
\mathsf K&:=2^{\frac{3\,d}\alpha+\frac{3+6\,\alpha}{\alpha\,(1-m)}+\vartheta+10}\,\frac{(\alpha+\Mstar)^\vartheta}{m^\vartheta(1-m)^{2\,(1+\vartheta)+\frac2{1-m}}}\\
&\quad \times\left[1+\lambdaBarenblatt^d\,C_{d, \nu, 1} \left(\(\overline\kappa\,\Mstar^\frac2\alpha\,\frac{2^\nu}{2^\nu-1}+c_2\)^\frac{d}{d+\nu}+\frac{\mu^{2d}}{\alpha^\frac d\alpha}\,\Mstar^\frac d{d+\nu}\right)\right]\,.
\end{split}\]
The exponent $\vartheta$ is the same as above. The other constants in the expression of $\mathsf K$ are
\[\label{SUPPL-gamma-norm-barenblatt}\tag{71}
c_2=2\,\max\Big\{\lambdaBarenblatt\,,\|\nabla B(1-\tfrac1\alpha,\cdot)\|_{\mathrm L^\infty(\R^d)}\Big\}
\]
where
\begin{multline*}
\|\nabla B(1-\tfrac1\alpha,\cdot)\|_{\mathrm L^\infty(\R^d)}=\(\tfrac\mu{\alpha^{1/\alpha}}\)^{d+1}\,\sup_{z>0}\tfrac{2\,z}{1-m}\(1+z^2\)^{-2\,\frac{2-m}{1-m}}\\
=\frac{\mu^{d+1}}{\alpha^\frac{d+1}\alpha}\,\tfrac{2^\frac1{m-1}}{\sqrt{(1-m)(3-m)}}\(\tfrac{3-m}{2-m}\)^\frac{2-m}{1-m}\,,
\end{multline*}
and $C_{d,\nu,1}$ corresponds to the optimal constant for $p=1$ in
\[\label{SUPPL-interpolation.inequality.cpt6}\tag{102}
\nrm u{\mathrm L^\infty(B_{R}(x))}\,\le\,C_{d, \nu, p}\left(\lfloor u\rfloor_{C^\nu(B_{2\,R}(x))}^{\frac d{d+p\,\nu}}\,\|u\|_{\mathrm L^p(B_{2\,R}(x))}^{\frac{p\,\nu}{d+p\,\nu}}+R^{-\frac dp}\,\|u\|_{\mathrm L^p(B_{2\,R}(x))}\right)\,.
\]
We know from~\cite[Appendix~A]{BDNS2020} that $C_{d, \nu, p}$ is independent of $R>0$.

The last step is to collect the above estimates and compute
\[\label{SUPPL-taustarabstract}\tag{89}
\taustar(m,d)=\sup_{\varepsilon\in(0,\varepsilon_{m,d})}\max\big\{\varepsilon\,\kappa_1(\varepsilon,m),\,\varepsilon^{\mathsf a}\kappa_2(\varepsilon,m),\,\varepsilon\,\kappa_3(\varepsilon,m)\big\}
\]
where
\begin{multline*}
\kappa_1(\varepsilon,m):=\max\Big\{\frac{8\,c}{(1+\varepsilon)^{1-m}-1}\,,\frac{2^{3-m}\,\kappa_\star}{1-(1-\varepsilon)^{1-m}}\Big\}\,,\\
\kappa_2(\varepsilon,m):=\frac{\(4\,\alpha\)^{\alpha-1}\,\mathsf K^\frac\alpha\vartheta}{\varepsilon^{\,\frac{2-m}{1-m}\,\frac\alpha\vartheta}}\quad\mbox{and}\quad\kappa_3(\varepsilon,m):=\frac{8\,\alpha^{-1}}{1-(1-\varepsilon)^{1-m}}\,.
\end{multline*}
We recall that $c:=\max\big\{1, 2^{5-m}\,\overline\kappa^{1-m}\,\lambdaBarenblatt^\alpha\big\}$ as above (also see~\cite[Eq.~(36)]{BDNS2020}).

\renewtagform{starred}{(}{)}

\subsection{A numerical estimate of the constant in the Gagliardo-Nirenberg inequality on the disk}\label{Appendix:Numerics}

The following two-dimensional Gagliardo-Nirenberg inequality has been established in~\cite[Lemma~18]{BDNS2020}.
\begin{lemma}\label{Lem:GNd=2} Let $d=2$. For any $R>0$, we have
\be{Disk}
\nrm u{\mathrm L^4(B_R)}^2\le\frac{2\,R}{\sqrt\pi}\left(\nrm{\nabla u}{\mathrm L^2(B_R)}^2+\frac1{R^2}\,\nrm u{\mathrm L^2(B_R)}^2\right)\quad\forall\,u\in\mathrm H^1(B_R)\,.
\ee
\end{lemma}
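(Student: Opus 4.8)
The plan is to reduce the statement to a scaling-invariant inequality on the unit disk and then prove that normalized inequality by a symmetrization-free argument combined with the one-dimensional Gagliardo--Nirenberg inequality on the circle. First I would use the dilation $u(x) = w(x/R)$, under which $\nrm u{\mathrm L^4(B_R)}^2 = R^{d/2}\,\nrm w{\mathrm L^4(B_1)}^2 = R\,\nrm w{\mathrm L^4(B_1)}^2$ (since $d=2$), while $\nrm{\nabla u}{\mathrm L^2(B_R)}^2 = \nrm{\nabla w}{\mathrm L^2(B_1)}^2$ and $\frac1{R^2}\nrm u{\mathrm L^2(B_R)}^2 = \frac1{R^2}\cdot R^2\,\nrm w{\mathrm L^2(B_1)}^2 = \nrm w{\mathrm L^2(B_1)}^2$. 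Hence \eqref{Disk} is equivalent to the single inequality
\[
\nrm w{\mathrm L^4(B_1)}^2 \le \frac2{\sqrt\pi}\left(\nrm{\nabla w}{\mathrm L^2(B_1)}^2 + \nrm w{\mathrm L^2(B_1)}^2\right)\qquad\forall\,w\in\mathrm H^1(B_1)\,,
\]
and it suffices to establish this with the stated constant $2/\sqrt\pi$.

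The key step is to control the $\mathrm L^4$ norm on the disk by passing through the trace on concentric circles. I would write, for a fixed direction, the pointwise bound coming from the one-dimensional fundamental theorem of calculus along rays, or more robustly use the following route: for $w\in C^1(\overline{B_1})$ and any $r\in(0,1]$, combine the $d=1$ Gagliardo--Nirenberg inequality $\nrm f{\mathrm L^4}^2 \le C\,\nrm{f'}{\mathrm L^2}^{1/2}\nrm f{\mathrm L^2}^{3/2} + \ldots$ on each circle $\partial B_r$ (of circumference $2\pi r$) with the trace/radial identity $\partial_r(w(r\omega))$ and integrate in $r$. A cleaner variant, which I expect gives exactly the constant $2/\sqrt\pi$, is to expand $w$ in Fourier modes on circles, reduce to the radial and first-mode parts, and use the sharp one-dimensional inequality; but the most direct path is: bound $w(x)^4$ using $w(x)^2 \le 2\int_0^1 |w||\nabla w|\,ds$ along the radial segment through $x$ plus a boundary term, square, and apply Cauchy--Schwarz together with the coarea formula $\int_{B_1} = \int_0^1\!\int_{\partial B_r}$. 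The factor $1/\sqrt\pi$ is the footprint of the optimal one-dimensional constant on an interval combined with $|\mathbb S^1| = 2\pi$.

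The main obstacle will be tracking the constant so that it comes out to exactly $2/\sqrt\pi$ rather than some larger multiple: naive use of Cauchy--Schwarz and the ray representation typically loses a dimensional factor, so one must either use the sharp one-dimensional Gagliardo--Nirenberg constant at each step or, alternatively, invoke the explicit optimizer structure. Since the statement is quoted from \cite[Lemma~18]{BDNS2020}, I would in fact reduce everything to that reference: establish the scaling equivalence above, observe that the $R=1$ case is precisely \cite[Lemma~18]{BDNS2020}, and conclude. A self-contained proof would then only need to reproduce the $d=2$, $R=1$ estimate, which follows from the sharp radial-decomposition argument sketched; the remaining work is the routine but delicate constant bookkeeping, and a density argument to pass from $C^1(\overline{B_1})$ to all of $\mathrm H^1(B_1)$.
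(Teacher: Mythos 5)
The paper does not actually prove Lemma~\ref{Lem:GNd=2} in this supplementary document: it states at the top of Appendix~\ref{Appendix:Numerics} that the inequality ``has been established in~\cite[Lemma~18]{BDNS2020}'', and the remainder of that appendix is devoted only to a numerical estimate of the \emph{optimal} constant ($\approx 0.0565$), obtained by solving the radial Euler--Lagrange ODE~\eqref{ODE} by shooting; the bound $2/\sqrt\pi$ is just quoted as an explicit (non-sharp) upper bound coming from the cited proof. Your fallback of reducing to $R=1$ by scaling and then citing \cite[Lemma~18]{BDNS2020} therefore matches what the supplementary document itself does, and your scaling computation (LHS and RHS both scale linearly in $R$, so the $R=1$ case is equivalent) is correct.

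That said, the ``self-contained'' portion of your proposal does not constitute a proof. You offer three different sketches (ray representation, Fourier decomposition on circles, coarea plus one-dimensional Gagliardo--Nirenberg on circles), and for each you only say that you ``expect'' it to yield the constant $2/\sqrt\pi$ and that the ``remaining work is the routine but delicate constant bookkeeping.'' These hedges are exactly where the content of the lemma lives. In particular, the ray/trace route you sketch produces a boundary term on $\partial B_1$ which must itself be absorbed by the $H^1(B_1)$ norm via a trace inequality with an explicit constant, and the naive Cauchy--Schwarz step you mention is known to lose factors; you acknowledge this but do not resolve it. So if your intent is a genuinely self-contained proof, there is a real gap here. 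If your intent is only to reduce to \cite[Lemma~18]{BDNS2020}, then the gap disappears and your argument coincides with the paper's treatment.
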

\noindent The optimal constant is approximatively
\[
0.0564922...<2/\sqrt\pi\approx1.12838\,.
\]
We know from the proof that $\mathcal C\le2/\sqrt\pi\approx1.12838$. Let us explain how we can compute the numerical value of the optimal constant $\mathcal C$ in the inequality~\eqref{Disk}. To compute $\mathcal C$ numerically, we observe that it is achieved among radial functions by symmetrization. The equality case is achieved by some radial function $u$, by standard compactness considerations. It is therefore enough to solve the Euler-Lagrange equation
\be{ODE}
-u''-\frac{u'}r+u=u^3\,,\quad u(0)=a>0\,,\quad u'(0)=0\,.
\ee
To emphasize the dependence of the solution in the shooting parameter $a$, we denote by $u_a$ the solution of~\eqref{ODE} with $u(0)=a$. We look for the value of $a$ for which $u_a$ changes sign only once (as it is orthogonal to the constants) and such that $u'(1)=0$, which is our shooting criterion. Let $s(a)=u_a'(1)$ for the solution of~\eqref{ODE}. With $a=1$, we find that $u_a\equiv1$.
\begin{figure}[ht]\begin{center}
\includegraphics[width=8cm]{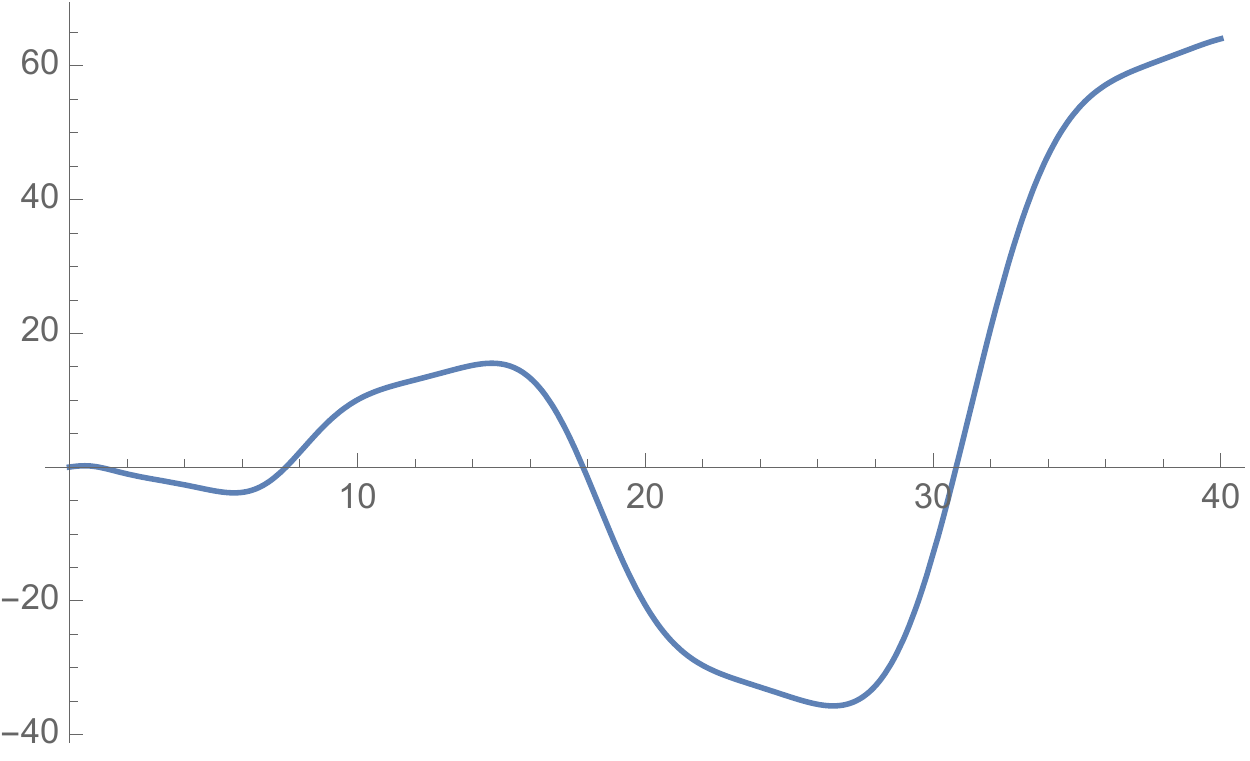}
\caption{Plot of $a\mapsto s(a)$. We find that $s(1)=0$ and also $s(a_\star)=0$ for some $a_\star\approx7.52449$ which provides us with a solution $u_{a_\star}$ with only one sign change.}
\end{center}
\end{figure}
\begin{figure}[ht]\begin{center}
\includegraphics[width=8cm,height=4cm]{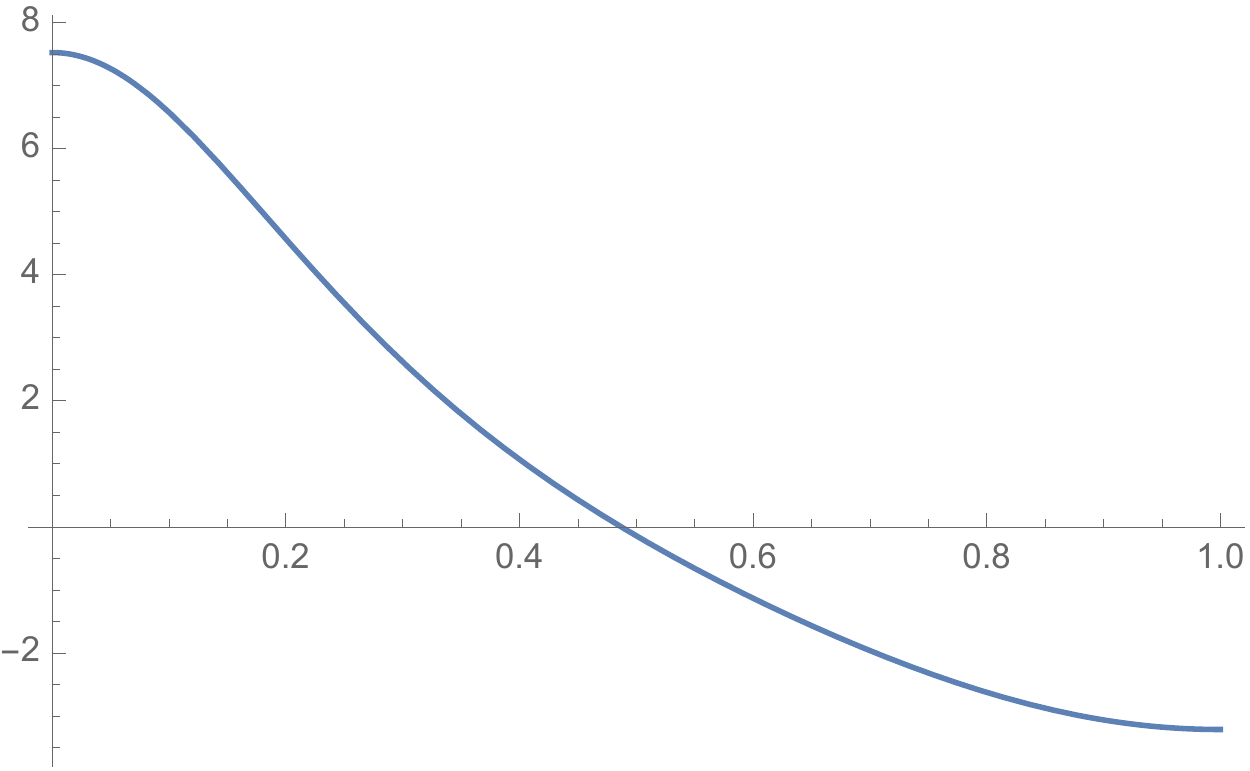}
\caption{Plot of the solution $u_{a_\star}$ of~\eqref{ODE}.}
\end{center}
\end{figure}

\vspace*{-0.5cm}Numerically, we obtain that
\[
2\,\pi\int_0^1\(|u'_{a_\star}|^2+|u_{a_\star}|^2\)r\,dr=2\,\pi\int_0^1|u_{a_\star}|^4\,r\,dr=\frac1{\mathcal C}\(2\,\pi\int_0^1|u_{a_\star}|^4\,r\,dr\)^{1/2}
\]
which means
\[
\mathcal C=\(2\,\pi\int_0^1|u_{a_\star}|^4\,r\,dr=\)^{-1/2}\approx0.0564922\,.
\]

\subsection{Truncation functions}\label{Appendix:Truncation}
Here are some details on the truncation functions used in this document.
\begin{lemma}[Lemma 2.2 of~\cite{Bonforte2012a}]\label{lem.test.funct} Fix two balls $B_{R_1}\subset B_{R_0}\subset\subset\Omega$. Then there exists a test function $\varphi_{R_1, R_0}\in C_0^1(\Omega)$, with $\nabla \varphi_{R_1, R_0}\equiv0$ on $\partial\Omega$, which is radially symmetric and piecewise $C^2$ as a function of $r$, satisfies $\supp(\varphi_{R_1, R_0})=B_{R_0}$ and $\varphi_{R_1, R_0}=1$ on $B_{R_1}$, and moreover satisfies the bounds
\be{test.estimates}
\|\nabla\varphi_{R_1, R_0}\|_\infty\le\frac2{R_0-R_1}\quad\mbox{and}\quad \|\Delta\varphi_{R_1, R_0}\|_\infty\le\frac{4\,d}{(R_0-R_1)^2}.
\ee
\end{lemma}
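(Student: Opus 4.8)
The final statement is Lemma~\ref{lem.test.funct} (Lemma 2.2 of~\cite{Bonforte2012a}), asserting the existence of a radial cut-off function $\varphi_{R_1,R_0}$ interpolating between $B_{R_1}$ and $B_{R_0}$ with controlled gradient and Laplacian. Here is how I would prove it.

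\textbf{Strategy.} The plan is to write $\varphi_{R_1,R_0}(x) = g(|x|)$ for an explicit one-variable profile $g:[0,\infty)\to[0,1]$ built so that $g\equiv 1$ on $[0,R_1]$, $g\equiv 0$ on $[R_0,\infty)$, and $g$ transitions smoothly enough on $[R_1,R_0]$. The naive choice of the piecewise-linear profile $g(r) = \min\{1,\max\{0,(R_0-r)/(R_0-R_1)\}\}$ already gives the gradient bound $\|\nabla\varphi\|_\infty \le 1/(R_0-R_1)$, but its second derivative contains Dirac masses at $r=R_1$ and $r=R_0$, so it is only Lipschitz, not piecewise $C^2$ with a bounded Laplacian. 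So instead I would take $g$ to be a \emph{quadratic} spline: on the transition annulus, glue together two parabolic arcs meeting at the midpoint $r_m = (R_0+R_1)/2$, chosen so that $g(R_1)=1$, $g'(R_1)=0$, $g(R_0)=0$, $g'(R_0)=0$, and $g,g'$ are continuous at $r_m$ (while $g''$ is allowed to jump at $r_m$). Explicitly, with $\delta = R_0-R_1$, set
\[
g(r) = 1 - \frac{2}{\delta^2}(r-R_1)^2 \quad\text{for } r\in[R_1,r_m], \qquad g(r) = \frac{2}{\delta^2}(r-R_0)^2 \quad\text{for } r\in[r_m,R_0],
\]
and $g\equiv 1$ resp. $g\equiv 0$ outside. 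One checks directly that this is $C^1$ across all junctions and piecewise $C^2$.

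\textbf{Key steps.} (i) Verify the matching conditions at $R_1$, $r_m$, $R_0$ — a routine computation showing $g(r_m)=1/2$, $g'(r_m^-) = g'(r_m^+) = -2/\delta$, $g'(R_1)=g'(R_0)=0$; hence $\varphi_{R_1,R_0}\in C_0^1(\Omega)$ with $\nabla\varphi\equiv 0$ on $\partial\Omega$ as required. (ii) Bound the gradient: $|\nabla\varphi| = |g'(r)|$, and on $[R_1,R_0]$ we have $|g'(r)| = \frac{4}{\delta^2}|r - R_1|$ or $\frac{4}{\delta^2}|r-R_0|$, each maximized at $r=r_m$ with value $2/\delta = 2/(R_0-R_1)$, giving the first bound. (iii) Bound the Laplacian: for a radial function, $\Delta\varphi = g''(r) + \frac{d-1}{r}g'(r)$. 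Here $|g''| = 4/\delta^2$ on each arc, and $\frac{d-1}{r}|g'(r)| \le \frac{d-1}{R_1}\cdot\frac{2}{\delta}$; however, to get the clean bound $4d/(R_0-R_1)^2$ stated in~\eqref{test.estimates} one uses $r\ge R_1 \ge \delta$ is \emph{not} assumed, so instead I would observe that on the support $r \ge R_1$ and crudely $\frac{d-1}{r}|g'(r)| \le \frac{d-1}{r}\cdot\frac{4}{\delta^2}|r-R_1| \le \frac{4(d-1)}{\delta^2}$ since $|r-R_1|\le r$. Combining, $|\Delta\varphi| \le \frac{4}{\delta^2} + \frac{4(d-1)}{\delta^2} = \frac{4d}{(R_0-R_1)^2}$, which is exactly the second bound.

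\textbf{Main obstacle.} The only delicate point is step (iii): getting the Laplacian estimate with the \emph{stated constant} $4d$ rather than a constant depending on the ratio $R_1/\delta$. The trick is the elementary inequality $|r-R_1|\le r$ valid on the transition annulus (since $R_1 \le r$), which removes the $1/R_1$ factor and lets the angular term $\frac{d-1}{r}g'$ be absorbed into the same $1/\delta^2$ scale. Everything else is bookkeeping. I would also remark that strictly speaking $g$ as defined is only piecewise $C^2$ (the second derivative jumps at $R_1$, $r_m$, $R_0$), which is precisely what the statement allows; if a genuinely $C^2$ function were needed one could mollify, but that is not required here. This proof is self-contained and does not even need the unweighted Poincaré inequality or any of the earlier machinery.
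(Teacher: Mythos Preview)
Your proposal is correct and takes essentially the same approach as the paper: the paper constructs exactly the same quadratic spline $g$ (two parabolic arcs glued at the midpoint $r_m=(R_0+R_1)/2$), writes out $\nabla\varphi$ and $\Delta\varphi$ explicitly, and then says the bounds~\eqref{test.estimates} follow ``easily.'' Your argument in step~(iii) using $|r-R_1|\le r$ to absorb the angular term is precisely what is needed to get the clean constant $4d$; just note that the same trick (with $|R_0-r|\le r$ on $[r_m,R_0]$, which holds since $r\ge r_m\ge R_0/2$) covers the second arc as well.
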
\textbf{
}
\begin{proof} With a standard abue of notation, we write indifferently that a radial function is a function of $x$ or of $|x|$. Let us consider the radial test function defined on $B_{R_0}$
\be{test.funct}
\varphi_{R_1, R_0}(|x|)=\left\{
\begin{array}{lll}
1\,&\quad\mbox{if }0\le |x|\le R_1\\[3mm]
1-\frac{2(|x|-R_1)^2}{(R_0-R_1)^2}\,&\quad\mbox{if }R_1<|x|\le\frac{R_0+R_1}2\\[3mm]
\frac{2(R_0-|x|)^2}{(R_0-R_1)^2}\,&\quad\mbox{if }\frac{R_0+R_1}2<|x|\le R_0\\[3mm]
0\,&\quad\mbox{if }|x|>R_0\\[3mm]
\end{array}
\right.
\ee
for any $0<R_1<R_0$. We have
\begin{equation*}
\nabla \varphi_{R_1, R_0}(|x|)=\left\{
\begin{array}{lll}
0\,&\mbox{if }0\le |x|\le R_1 \mbox{ or if }|x|>R_0\\[3mm]
-\frac{4(|x|-R_1)}{(R_0-R_1)^2}\frac{x}{|x|}\,&\quad\mbox{if }R_1<|x|\le\frac{R_0+R_1}2\\[3mm]
-\frac{4(R_0-|x|)}{(R_0-R_1)^2}\frac{x}{|x|}\,&\quad\mbox{if }\frac{R_0+R_1}2<|x|\le R_0\\[3mm]
\end{array}
\right.
\end{equation*}
and, recalling that $\Delta\varphi(|x|)=\varphi''(|x|)+(d-1)\varphi'(|x|)/|x|$, we have
\begin{equation*}
\Delta \varphi_{R_1, R_0}(|x|)=\left\{
\begin{array}{lll}
0\,&\quad\mbox{if }0\le |x|\le R_1 \mbox{ or if }|x|>R_0\\[3mm]
-\frac4{(R_0-R_1)^2}-\frac{d-1}{|x|}\frac{4(|x|-R_1)}{(R_0-R_1)^2}\,&\quad\mbox{if }R_1<|x|\le\frac{R_0+R_1}2\\[3mm]
-\frac4{(R_0-R_1)^2}-\frac{d-1}{|x|}\frac{4(R_0-|x|)}{(R_0-R_1)^2}\,&\quad\mbox{if }\frac{R_0+R_1}2<|x|\le R_0\\[3mm]
\end{array}
\right.
\end{equation*}
and easily obtain the bounds~\eqref{test.estimates}.\end{proof}

\medskip\noindent{\bf Acknowledgments:} This work has been partially supported by the Project EFI (ANR-17-CE40-0030) of the French National Research Agency (ANR). M.B. and N.S. were partially supported by Projects MTM2017-85757-P (Spain) and by the E.U. H2020 MSCA programme, grant agreement 777822. N.S. was partially funded by the FPI-grant BES-2015-072962, associated to the project MTM2014-52240-P (Spain).
\\\noindent\emph{\footnotesize\copyright\,2020 by the authors. This paper may be reproduced, in its entirety, for non-commercial purposes.}

\newpage\parskip=4pt\tableofcontents
\end{document}